\theoremstyle{plain}
\newtheorem{theorem}{Theorem}[section]
\newtheorem{corollary}[theorem]{Corollary}
\newtheorem{lemma}[theorem]{Lemma}
\newtheorem{proposition}[theorem]{Proposition}
\theoremstyle{definition}
\theoremstyle{definition}
\newtheorem{remark}[theorem]{Remark}
\newtheorem{example}[theorem]{Example}  
\newcommand{\NN}{\mathbb{N}}
\newcommand{\RR}{\mathbb{R}}
\newcommand{\PP}{\mathbb{P}}
\newcommand{\FF}{\mathbb{F}}
\newcommand{\TT}{\mathbb{T}}
\newcommand{\EE}{\mathbb{E}}
\newcommand{\cS}{\mathcal{S}}
\newcommand{\cF}{\mathcal{F}}
\newcommand{\cE}{\mathcal{E}}
\newcommand{\supp}{\mathrm{supp}\,}
\newcommand{\Log}{\mathrm{Log}}
\newcommand{\frace}{\mathfrak{E}}
\begin{document}
	\title{Exponential functionals of Markov additive processes} 
	\author{Anita Behme\thanks{Technische Universit\"at
			Dresden, Institut f\"ur Mathematische Stochastik, Zellescher Weg 12-14, 01069 Dresden, Germany, \texttt{anita.behme@tu-dresden.de} and \texttt{apostolos.sideris@tu-dresden.de}, phone: +49-351-463-32425, fax:  +49-351-463-37251.}\; and Apostolos Sideris$^\ast$}
	\date{\today}
	\maketitle
	
	\vspace{-1cm}
	\begin{abstract}
		We provide necessary and sufficient conditions for convergence of exponential integrals of Markov additive processes. By contrast with the classical Lévy case studied by Erickson and Maller we have to distinguish between almost sure convergence and convergence in probability. Our proofs rely on recent results on perpetuities in a Markovian environment by Alsmeyer and Buckmann.
	\end{abstract}
	
	2010 {\sl Mathematics subject classification.} 60H10, 60J75 (primary), 60G51, 60J25 (secondary)\\
	
	{\sl Keywords:} Exponential functional, L\'evy process, Markov additive process, Markov modulated perpetuity, Markov switching model

	\section{Introduction}\label{S0}
	\setcounter{equation}{0}
	
	Given a bivariate L\'evy process $(\xi_t, \eta_t)_{t\geq 0}$ the corresponding {\sl exponential functional} is defined as
	\begin{equation}\label{eq:expfunclevy}
	\int_{(0,\infty)} e^{-\xi_{t-}} d\eta_t, 
	\end{equation}
	provided that the integral converges a.s. Necessary and sufficient conditions for this convergence in terms of the L\'evy characteristics of $(\xi_t, \eta_t)_{t\geq 0}$ have been given in \cite[Thm. 2]{ericksonmaller05}.
	
	As shown in \cite{lindnermaller05} exponential functionals of L\'evy processes describe exactly the stationary distributions of generalized Ornstein-Uhlenbeck processes, a class of processes that stems from physics, and nowadays has numerous applications e.g. in finance and insurance, see e.g. \cite{KLM:2004, paulsen93}. Due to this connection, the resulting importance in applications, and their complexity, exponential functionals have gained a lot of attention from various researchers over the last decades, see e.g. \cite{behmeALEA, behmelindnerJOTP, bertoinyor, kuznetsovetal, PardoPatieSavov, PardoRiveroSchaik} to name just a few. 
	
	Dating back to \cite{hamilton} Markov switching models have become a popular tool in financial mathematics and elsewhere. Thus it is a natural attempt to study exponential functionals with a Markov switching behaviour. In our paper, given a bivariate Markov additive process $(\xi_t,\eta_t,J_t)_{t\geq 0}$ with Markovian component $(J_t)_{t\geq 0}$, we denote the \emph{exponential integral of the Markov additive process $(\xi_t,\eta_t,J_t)_{t\geq 0}$} by
	\begin{equation} \label{eq_fracefinite}
	\frace_{(\xi,\eta)}(t):= \int_{(0,t]} e^{-\xi_{s-}} d\eta_s, \quad 0<t<\infty, \end{equation}
	and - in analogy to the Lévy case - refer to its limit 
	\begin{equation} \label{eq_fracedef} \frace_{(\xi,\eta)}^\infty: = \int_{(0,\infty)} e^{-\xi_{s-}} d\eta_s \end{equation}
	as \emph{exponential functional}, whenever it exists.
	
	We prove necessary and sufficient conditions for convergence of $\frace_{\xi,\eta}(t)$ as $t\to \infty$. As it will turn out, by contrast with the classical L\'evy setting here we have to distinguish between almost sure convergence and convergence in probability. We also provide an example of an integral that converges in probability but not almost surely. Another somewhat surprising contrast to the classical setting is the fact that $\lim_{t\to \infty} \xi_t = \infty$ a.s. is no longer necessary for convergence of the integral. Thirdly, the possible degenerate behaviour of $\frace_{\xi,\eta}(t)$ allows for much more flexibility compared to the L\'evy setting. 
		
	Note that exponential functionals of (Markov) additive processes have recently attracted the attention of other researchers as well. Finiteness and tails of the functional \eqref{eq_fracedef} with $\eta_t=t$ are treated in the recent manuscript \cite{alili}. In \cite{salminen1, salminen2} the functional \eqref{eq_fracedef} with  $\eta_t=t$ is studied with an emphasis on moments, while \cite{stephenson} considers similar questions and relations to discrete random structures for the special case of \eqref{eq_fracedef} for $\eta_t=t$ and $\xi$ being a Markov modulated subordinator. Further, exponential integrals of Markov additive processes \eqref{eq_fracefinite} with $\eta_t=t$ appear in the Lamperti-Kiu representation of real self-similar Markov processes, see e.g. \cite{chaumont, kyprianoudereich, kyprianousatit} and references therein. 
	
	The paper is organized as follows. Section \ref{S1}  briefly reviews known results on convergence of exponential integrals of (bivariate) Lévy processes and on perpetuities in a Markovian environment. The class of bivariate Markov additive processes that we shall work with together with some relevant properties are introduced in Section \ref{S1b}. In Section \ref{S3} we present and prove our main result giving necessary and sufficient conditions for convergence of the exponential integral \eqref{eq_fracedef} and discuss the degenerate cases. Finally, Section \ref{S4} is devoted to deriving sufficient conditions for convergence of \eqref{eq_fracedef} that are easier applicable than those from Section \ref{S3}.

	\section{Preliminaries}\label{S1}
	\setcounter{equation}{0}
	
	\subsection{Exponential functionals of L\'evy processes}\label{S1a}
	
	Let $(\xi_t,\eta_t)_{t\geq 0}$ be a bivariate L\'evy process and denote by $(\gamma_\xi,\sigma_\xi^2,\nu_\xi)$ and $(\gamma_\eta,\sigma_\eta^2,\nu_\eta)$ the characteristic triplets of the two marginal processes. We refer to \cite{sato2nd} for any relevant background on L\'evy processes. 
	
	As mentioned in the introduction, Erickson and Maller showed in \cite[Thm. 2]{ericksonmaller05} that the exponential functional of a bivariate L\'evy process \eqref{eq:expfunclevy} exists as a.s. limit as $t\to \infty$ of $\int_0^t e^{-\xi_{s-}}d\eta_s$ if and only if 
	\begin{equation}\label{eq_EM1}
	\lim_{t\to \infty} \xi_t= \infty \; \text{a.s.  and} \quad I_{\xi,\eta}=\int_{(e^a,\infty)} \left(\frac{\log y}{A_\xi(\log y)} \right) |d\bar{\nu}_\eta(y)|<\infty,
	\end{equation}
	where 
	\begin{equation}\label{eq_EM4} A_\xi(x)=\gamma_\xi + \bar{\nu}_\xi^+(1) + \int_{(1,x)} \bar{\nu}_\xi^+(y) dy,\end{equation}
	with
	$$\bar{\nu}_\xi^+(x)=\nu_\xi((x,\infty)), \quad \bar{\nu}_\xi^-(x)=\nu_\xi((-\infty,-x)),\quad   \bar{\nu}_\xi(x)=\bar{\nu}_\xi^+(x)+\bar{\nu}_\xi^-(x),$$
	and $\bar{\nu}_\eta^+$, $\bar{\nu}_\eta^-$ and $\bar{\nu}_\eta$ defined likewise. Hereby $a>0$ is chosen such that $A_\xi(x)>0$ for all $x>a$ and its existence is guaranteed whenever $\lim_{t\to \infty} \xi_t= \infty$ a.s.\\
	Further it is shown in \cite[Thm. 2]{ericksonmaller05} that if 
	$\lim_{t\to \infty} \xi_t= \infty$ a.s. but $I_{\xi,\eta}=\infty$, then 
	\begin{equation}\label{eq_EM2}
	\left|\int_{(0,t]} e^{-\xi_{s-}}d\eta_s \right|\overset{\PP}\longrightarrow \infty, \quad \text{as }t\to\infty,
	\end{equation}
	while for $\lim_{t\to \infty} \xi_t =-\infty$ a.s. or oscillating $\xi$ either \eqref{eq_EM2} holds, or there exists some $k\in \RR\setminus \{0\}$ such that
	\begin{equation}\label{eq_EM3}
	\int_{(0,t]} e^{-\xi_{s-}} d\eta_s  = k(1-e^{-\xi_t})\quad \text{for all }t>0 \quad \text{a.s.}
	\end{equation}
	Note that for any $h>0$ the exponential functional \eqref{eq:expfunclevy} can be discretized in the sense that for all $n\in\NN$
	$$\int_{(0,nh]} e^{-\xi_{s-}}d\eta_s = \sum_{i=0}^{n-1} \int_{(ih,(i+1)h]} e^{-\xi_{s-}}d\eta_s = \sum_{i=0}^{n-1} \left( \prod_{j=0}^{i-1} e^{-(\xi_{(j+1)h} - \xi_{jh} )} \right) \int_{(ih,(i+1)h]} e^{-(\xi_{s-}-\xi_{ih})} d\eta_s,$$
	and hence convergence of the integral is strongly connected to the convergence of discrete-time perpetuities as studied in \cite{goldiemaller00}. Indeed, the proof of the above results given in \cite{ericksonmaller05} relies heavily on choosing an appropriate discretization of \eqref{eq:expfunclevy} and afterwards applying results from \cite{goldiemaller00}.

	\subsection{Markov modulated perpetuities}
	
	Recently Alsmeyer and Buckmann \cite{alsmeyerbuckmann1} generalized the results from \cite{goldiemaller00} to a Markovian environment. More precisely they study convergence of 
	\begin{equation} \label{eq_AB1}
	Z_n:=\sum_{i=1}^n \left( \prod_{j=1}^{i-1} A_j \right) B_i
	\end{equation}
	as $n\to \infty$, where $(A_n,B_n)_{n\in\NN}$ is a sequence of random vectors in $\RR^2$ which is modulated by an ergodic Markov chain $(M_n)_{n\in\NN_0}$ with countable state space $\cS$ and stationary law $\pi$ in the sense that
	conditionally on $M_n=j_n\in \cS$, $n=0,1,2,\ldots $ the random vectors $(A_1,B_1),(A_2,B_2),\ldots$ are independent, and
for all $n\in \NN$ the conditional law of $(A_n,B_n)$ is temporally homogeneous and depends only on $(j_{n-1},j_n)\in \cS^2$.\\
	We write $\PP_j:=\PP(\cdot|M_0=j)$ and $\PP_\pi=\sum_{j\in\cS} \pi_j \PP_j$. 
	Then, under the non-degeneracy conditions 
	\begin{equation}\label{eq_AB2}
	\PP_\pi(A= 0)=0 \text{ and } \PP_\pi (B=0)<1
	\end{equation} 
	for a generic copy $(A,B)$ of the $(A_n,B_n)$ under $\PP_\pi$,
	it follows from \cite[Thm. 3.1]{alsmeyerbuckmann1} that \eqref{eq_AB1} converges a.s. as $n\to\infty$ to a proper random variable given by $Z_\infty$ if and only if
	\begin{align} \label{eq_AB3}
	&\lim_{n\to \infty} \prod_{k=1}^{\tau_n(i)} A_k =0 \;\; \PP_\pi\text{-a.s. and } \\ & \int_{(1,\infty)} \frac{\log q}{\int_{(0,\log q)} \PP_j(-\log |\prod_{\ell=1}^{\tau_1(j)} A_\ell | >x) dx}  \PP_j(W_j\in dq) <\infty \text{ for all }j \in \cS, \nonumber
	\end{align}
	where 
	$$\tau_0(j):=0, \quad \tau_n(j)=\inf\{k>\tau_{n-1}(j): M_k=j\}, \quad j\in \cS$$
	are the return times of $(M_n)_{n\in\NN_0}$, 
	and
	\begin{equation} \label{eq_AB5}
	W_j:=\max_{1\leq k\leq \tau_1(j)} \big|\prod_{\ell=1}^{k-1} A_\ell B_k\big|, \quad j\in\cS.
	\end{equation}
	Note that $\lim_{n\to \infty} \prod_{k=1}^{\tau_n(j)} A_k =0$ $\PP_\pi$-a.s. necessarily implies that $\PP_j(| \prod_{\ell=1}^{\tau_1(j)} A_\ell| \geq 1 )<1$ and hence the denominator in the integral in \eqref{eq_AB3} is non-zero.\\
	We remark that in \cite[Thm. 3.1 and Rem. 3.3]{alsmeyerbuckmann1} the authors state that \eqref{eq_AB3} for some $j\in \cS$ is equivalent to \eqref{eq_AB3} for all $j\in \cS$. As we were not able to follow their argumentation in the discrete time setting or to derive a similar result in the continuous time setting, we stick to the stronger assumption here.\\
	Interestingly, by contrast with the case of i.i.d. sequences $(A_n,B_n)_{n\in\NN}$, if almost sure convergence fails, the perpetuity \eqref{eq_AB1} can still converge in probability. More precisely, from \cite[Thm. 3.4]{alsmeyerbuckmann1} we derive under the assumption \eqref{eq_AB2} that $\PP_j(Z_n\in \cdot )$ for some $j\in \cS$ converges weakly to some probability measure $Q_j$ as $n\to\infty$  if  
	\begin{align}\label{eq_AB7}
	&\lim_{n\to \infty} \prod_{k=1}^{\tau_n(j)} A_k =0 \;\PP_\pi \text{-a.s. and } \\ & \int_{(1,\infty)} \frac{\log q}{\int_{(0,\log q)} \PP_j(-\log |\prod_{\ell=1}^{\tau_1(j)} A_\ell | >x) dx}  \PP_j(|Z_{\tau_1(j)}| \in dq) <\infty. \nonumber
	\end{align}
	In this case, there exists a random variable $Z_\infty$ such that $Q_j(\cdot)=\PP_j(Z_\infty\in \cdot)$ and $Z_n\overset{\PP_j}\rightarrow Z_\infty$. Moreover, if \eqref{eq_AB7} holds for some $j\in\cS$, convergence in probability is valid for all $j\in\cS$.\\
	Furthermore, if \eqref{eq_AB7}
	is violated and the degeneracy condition 
	\begin{equation}\label{eq_AB9}
	\PP_\pi(A_1c_{M_1} + B_1= c_{M_0})=1 \text{ for suitable constants }c_j\in\RR, j\in\cS,
	\end{equation}
	fails, then 
	\begin{equation}\label{eq_AB10}
	|Z_n|\overset{\PP_\pi}\longrightarrow \infty, \quad n\to\infty.
	\end{equation}

	\section{Bivariate Markov additive processes}\label{S1b}
	\setcounter{equation}{0}

	The theory of Markov additive processes (MAPs) goes back to \c{C}inlar \cite{cinlarMAP1, cinlarMAP2} and has been enhanced since then by various researchers (see e.g. \cite{asmussenkella, kyprianoudereich, grigelionis, klusikpalm}). In this paper we restrict to the most popular framework of MAPs, that is to Markov modulated Lévy processes similar to the setting in \cite{asmussenkella} and \cite{klusikpalm}. We refer to \cite{asmussen} for a standard modern treatment of the topic and set notation as follows. 
	
	Let $(J_t)_{t\geq 0}$ be a right-continuous, ergodic, continuous time Markov chain with finite or countable state space $\cS\subseteq \NN$, intensity matrix $Q=(q_{i,j})_{i,j\in\cS}$ and stationary law $\pi=(\pi_j)_{j\in\cS}$. We denote the jump times of $(J_t)_{t\geq 0}$ by $\{T_n, n\in\NN_0\},$ with $T_0:=0$, while
	\begin{equation}\label{eq_MAP03}
	\tau_0(j):=0, \quad \text{and}\quad \tau_n(j):= \inf\{T_k>\tau_{n-1}(j): J_{T_k}=j\}, \; n\in\NN, j\in\cS,
	\end{equation}
	are its return times and
	\begin{equation}
	\tau_0^-(j)=0, \quad \text{and}\quad \tau_n^-(j):= \inf\{T_k>\tau_{n-1}(j): J_{T_k}\neq j\}, \; n\in\NN, j\in\cS,
	\end{equation}
	are the corresponding exit times under $\PP_j$. The sojourn time of $(J_t)_{t\geq 0}$ in a state $j\in\cS$ is denoted as
	$$\TT_j:=\{t\geq 0: J_t=t\}$$
	and clearly under $\PP_j$ we have $\TT_j=\bigcup_{n\in\NN} [\tau_{n-1}(j),\tau_n^-(j))$.
	
	Further, let $(\xi^{(j)}_t,\eta^{(j)}_t)_{t\geq 0}$, $j\in \cS,$ be bivariate L\'evy processes with characteristic triplets $(\gamma^{(j)}, \Sigma^{(j)}, \nu^{(j)})$ where $\gamma^{(j)}=(\gamma_{\xi^{(j)}}, \gamma_{\eta^{(j)}})$, $\nu^{(j)}$ denotes the L\'evy measure and 
	$$\Sigma^{(j)}= \Big(\begin{smallmatrix}
	\sigma_{\xi^{(j)}}^2 & \sigma_{\xi^{(j)},\eta^{(j)}}^2 \\\sigma_{\xi^{(j)},\eta^{(j)}}^2 & \sigma_{\eta^{(j)}}^2
	\end{smallmatrix}\Big).$$
	Set 
	\begin{equation}\label{eq_MAP01}
	(\xi_t,\eta_t):= (X_t^{(1)},Y_t^{(1)})+  (X_t^{(2)},Y_t^{(2)}), \quad t\geq 0,
	\end{equation}
	where
	$(X_t^{(1)},Y_t^{(1)})$ behaves in law like $(\xi^{(j)}_t,\eta^{(j)}_t)$ whenever $J_t=j$, while $(X_t^{(2)},Y_t^{(2)})$ is a pure jump process given by
	\begin{equation}\label{eq_MAP02}
	(X_t^{(2)},Y_t^{(2)}) = \sum_{n\geq 1} \sum_{i,j\in \cS} Z_n^{(i,j)} \mathds{1}_{\{J_{T_n-}=i, J_{T_n}=j, T_n\leq t \}},
	\end{equation}
	for i.i.d. random variables $(Z_n^{(i,j)})_{n\in \NN}$ in $\RR^2$ with distribution functions $F^{(i,j)}$, $i,j\in\cS$ (possibly with all mass/an atom in $0$). As starting value we use $(\xi_0,\eta_0)=(0,0)$ and throughout we assume that neither $\xi$ nor $\eta$ is degenerate constantly equal to $0$ a.s. 
	
	The joint process $(\xi_t,\eta_t, J_t)_{t\geq 0}$ is a MAP and we refer to $(J_t)_{t\geq 0}$ as its \emph{Markovian component}, while $(\xi_t,\eta_t)_{t\geq 0}$ is its \emph{additive component}. Clearly the marginal processes $(\xi_t, J_t)_{t\geq 0}$ and $(\eta_t, J_t)_{t\geq 0}$ are MAPs as well.
	
	We assume that the introduced processes are defined on a complete filtered probability space $(\Omega,\cF, \FF=(\cF_t)_{t\geq 0}, \PP)$ where $\FF$ is the augmented natural filtration induced by $(\xi_t,\eta_t, J_t)_{t\geq 0}$. We write $\PP_j:= \PP(\cdot|J_0=j)$ and $\PP_\pi=\sum_{j\in\cS} \pi_j \PP_j$, with the corresponding expectations $\EE_j$ and $\EE_\pi$ defined accordingly.
	
	Note that for simplicity we will sometimes abuse notation and - given $J_t=j$ - identify the processes $X_t^{(1)}$ and $\xi_t^{(j)}$ or  $Y_t^{(1)}$ and $\eta_t^{(j)}$ where this is suitable.
	
	Due to the switching L\'evy process character of the first summand in the additive component it is not surprising that, given the Markovian component, these components admit a L\'evy-It\^o-type decomposition (see e.g. \cite[Thm. 19.3]{sato2nd}). Exemplarily we decompose $(\eta_t)_{t\geq 0}$ as
	\begin{align} 
	\eta_t & =  \int_{(0,t]} \gamma_{\eta^{(J_s)}} ds + \int_{(0,t]} \sigma_{\eta^{(J_s)}}^2 dW_s +  \int_{(0,t]} \int_{|x|\geq 1} x N_{\eta^{(J_s)}}(ds, dx) +  \nonumber\\
	 & \quad + \lim_{\varepsilon \to 0} \int_{(0,t]} \int_{\varepsilon \leq |x|< 1} x (N_{\eta^{(J_s)}}(ds, dx) - ds\,\nu_{\eta^{(J_s)}}(dx)) +  Y_t^{(2)} \nonumber \\
	&=: \gamma^\eta_t + W^\eta_t + Y^{b,\eta}_t + Y^{s,\eta}_t  +  Y_t^{(2)},\label{eq_MAP04}
	\end{align}
	where $(W_t)_{t\geq 0}$ is a standard Brownian motion and $N_{\eta^{(j)}}$ are Poisson random measures with intensity measures $ds\,\nu_{\eta^{(j)}}(dx)$, respectively. Using this decomposition it is straightforward to define integration with respect to the additive component of a MAP given its Markovian component. \\
	Another property of the additive components that carries over from L\'evy processes and which will be of importance in our results is the well-known fact that L\'evy processes in $\RR$ either drift to $\pm \infty$ or oscillate. To formulate the analoguous result for MAPs, we introduce their \emph{long-term mean} (here for the MAP $(\xi_t,J_t)_{t\geq 0}$) 
	\begin{equation}\label{eq-MAP06}
	\kappa_\xi:= \sum_{j\in\cS} \pi_j \left( \gamma_{\xi^{(j)}} + \int_{|x|\geq 1} x \nu_{\xi^{(j)}}(dx)  \right) + \sum_{\substack{(i,j)\in \cS\times \cS \\ i\neq j}} \pi_i  q_{i,j} \int_\RR x F_\xi^{(i,j)}(dx),
	\end{equation}
	which is finite whenever $\EE[|\xi^{(j)}_1|]<\infty$ and $\int_\RR |x| F_\xi^{(i,j)}(dx)<\infty$ for all $i,j\in\cS$. Whenever $\cS$ is finite, $\kappa_\xi$ fully determines the long-term behaviour of $(\xi_t)_{t\geq 0}$ as follows (see \cite[Prop. XI.2.10]{asmussen}):
	\begin{align}
	\kappa_\xi>0 \quad &\Rightarrow  \quad \lim_{t\to \infty} \xi_t =\infty \;\; \PP_\pi\text{-a.s.}, \label{eq-MAP07}\\
	\kappa_\xi<0 \quad &\Rightarrow  \quad \lim_{t\to \infty} \xi_t =-\infty \;\; \PP_\pi\text{-a.s.}, \quad \text{while} \label{eq-MAP08}\\
	\kappa_\xi=0 \text{ and } \PP_\pi\big(\sup_{t\geq 0}|\xi_t|<\infty\big)<1  \quad &\Rightarrow  \quad \limsup_{t\to \infty} \xi_t =\infty \text{ and } \liminf_{t\to \infty} \xi_t =-\infty \;\; \PP_\pi\text{-a.s.} \label{eq-MAP09}
	\end{align}
	For countable $\cS$, as noted in \cite[Cor. 2.2]{kellarama}, $0<\kappa_\xi<\infty$ implies $\lim_{t\to \infty} \xi_t/t =\kappa_\xi \;\; \PP_\pi\text{-a.s.}$ such that in particular $\lim_{t\to \infty} \xi_t =\infty \;\; \PP_\pi\text{-a.s.}$

	\section{Main results and Discussion}\label{S3}
	\setcounter{equation}{0}
	
	\subsection{Main  theorem}
	
	Recall from \eqref{eq_fracedef} that given a bivariate Markov additive process $(\xi_t,\eta_t,J_t)_{t\geq 0}$ with Markovian component $(J_t)_{t\geq 0}$ as introduced above, we denote the exponential integral of $(\xi_t,\eta_t)_{t\geq 0}$ as
	\begin{equation*}
	 \frace_{(\xi,\eta)}(t):= \int_{(0,t]} e^{-\xi_{s-}} d\eta_s, \quad 0<t<\infty. \end{equation*}
	
	The following theorem provides necessary and sufficient conditions for almost sure and weak convergence of $\frace_{(\xi,\eta)}(t)$ as $t\to \infty.$ To formulate the conditions, we set
	$$A^j_\xi(x):=A_{\xi^{(j)}}(x)-q_{j,j} \Big(\PP_j\big(\xi_{\tau_1(j)}-\xi_{\tau^-_1(j)}\in (1,\infty)\big) + \int_{(1,x)} \PP_j\big(\xi_{\tau_1(j)}-\xi_{\tau^-_1(j)}\in (y,\infty)\big) dy \Big),$$
	with $A_{\xi^{(j)}}$ from \eqref{eq_EM4}, and
	$$\bar{\nu}_\eta^j(dy):= \nu_{\eta^{(j)}}(dy) - q_{j,j}\Big( \PP_j(\eta_{\tau_1(j)}-\eta_{\tau^-_1(j)}\in dy) + \PP_j\Big( \int_{[\tau_1^-(j),\tau_1(j)]} e^{-(\xi_{s-}-\xi_{\tau_1^-(j)})} d\eta_s \in dy\Big)\Big).$$
		
	\begin{theorem}\label{thmnessandsuff}
	Assume $\lim_{t\in \TT_j, t\to \infty} \xi_t=\infty$ $\PP_j$-a.s. for some hence all $j\in \cS$. Then $\frace_{(\xi,\eta)}(t)\to \frace_{(\xi,\eta)}^\infty$ $\PP_\pi$-a.s. as $t\to \infty$ for some random variable $\frace_{(\xi,\eta)}^\infty$ if and only if  for all $j\in \cS$
	\begin{equation}\label{eq-nesssuffas}
	\int_{(1,\infty)} \frac{\log q}{\int_{(0,\log q]} \PP_j(\xi_{\tau_1(j)}>u) du } \PP_j\left( \sup_{0< t \leq \tau_1(j)} \Big|\int_{(0,t]} e^{-\xi_{s-}} d\eta_s  \Big|\in dq \right)<\infty.
	\end{equation}
	Further, if $\lim_{t\in \TT_j, t\to \infty} \xi_t=\infty$ $\PP_j$-a.s. and 
	\begin{equation}\label{eq-nesssuffweak}
	\int_{(1,\infty)} \frac{\log q}{A^j_\xi(\log q)} |d\bar{\nu}_\eta^j(q)| <\infty,
	\end{equation}
	for some $j\in \cS$, then  
	$\frace_{(\xi,\eta)}(t)\to \frace_{(\xi,\eta)}^\infty$ in $\PP_j$-probability as $t\to\infty$ for all $j\in\cS$.\\
	Conversely, if $\liminf_{t\in \TT_j, t\to \infty}\xi_t<\infty$ $\PP_j$-a.s. for some $j\in \cS$ or if \eqref{eq-nesssuffweak} fails for all $j\in\cS$, then either there exists a (unique) sequence $\{c_j,j\in\cS\}$ in $\RR$ such that the functional is \emph{degenerate} in the sense that
	\begin{equation}\label{eq_deg00}
	\frace_{(\xi,\eta)}(t) = \int_{(0,t]} e^{-\xi_{s-}} d\eta_s = c_{J_0}- c_{J_t} e^{-\xi_{t}} \quad \PP_{\pi}\text{-a.s.} 
	\end{equation}
	for all $t\geq 0$, or
	$$|\frace_{(\xi,\eta)}(t)|\overset{\PP_\pi}\longrightarrow \infty \quad \text{as }t\to\infty.$$		
	\end{theorem}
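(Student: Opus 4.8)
The plan is to discretise $\frace_{(\xi,\eta)}(t)$ along the jump times of $(J_t)_{t\geq0}$, to reduce the continuous-time problem to a Markov-modulated (and, at the return times to a fixed state, i.i.d.) perpetuity of the form \eqref{eq_AB1}, and to transport the results of \cite{alsmeyerbuckmann1} and \cite{goldiemaller00} back to continuous time. Let $T_0=0<T_1<T_2<\cdots$ be the jump times of $(J_t)_{t\geq0}$, put $M_n:=J_{T_n}$, and set $A_n:=e^{-(\xi_{T_n}-\xi_{T_{n-1}})}$ and $B_n:=\int_{(T_{n-1},T_n]}e^{-(\xi_{s-}-\xi_{T_{n-1}})}\,d\eta_s$. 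On each sojourn interval $(T_{n-1},T_n]$ the chain is constant $\equiv M_{n-1}$, the additive part evolves like $(\xi^{(M_{n-1})},\eta^{(M_{n-1})})$, the length $T_n-T_{n-1}$ is exponential with parameter $-q_{M_{n-1},M_{n-1}}$ and independent of $M_n$, and there is a jump $Z_n^{(M_{n-1},M_n)}$ at $T_n$; conditionally on $(M_n)_{n\geq0}$ the pairs $(A_n,B_n)$ are therefore independent with a law depending only on $(M_{n-1},M_n)$, so $(A_n,B_n)_{n\in\NN}$ is modulated by $(M_n)_{n\in\NN_0}$ in the sense of Section~\ref{S1}. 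Telescoping \eqref{eq_fracefinite} gives $Z_n:=\sum_{i=1}^n\big(\prod_{k=1}^{i-1}A_k\big)B_i=\frace_{(\xi,\eta)}(T_n)$, and with $\tau_n(j)$ from \eqref{eq_MAP03} one reads off $\prod_{k=1}^{\tau_n(j)}A_k=e^{-\xi_{\tau_n(j)}}$, $-\log\big|\prod_{\ell=1}^{\tau_1(j)}A_\ell\big|=\xi_{\tau_1(j)}$ and $Z_{\tau_1(j)}=\frace_{(\xi,\eta)}(\tau_1(j))$. The non-degeneracy \eqref{eq_AB2} holds: $A>0$ always, and $\PP_\pi(B=0)=1$ would force $\eta\equiv0$, which is excluded.

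For the almost sure statement I would work with the i.i.d.\ skeleton at the return times to $j$: since $(J_t)$ regenerates at each $\tau_n(j)$ in state $j$, the pairs $(A_n',B_n'):=\big(e^{-(\xi_{\tau_n(j)}-\xi_{\tau_{n-1}(j)})},\int_{(\tau_{n-1}(j),\tau_n(j)]}e^{-(\xi_{s-}-\xi_{\tau_{n-1}(j)})}\,d\eta_s\big)$ are i.i.d.\ under $\PP_j$, $\frace_{(\xi,\eta)}(\tau_n(j))=\sum_{i=1}^n\big(\prod_{k=1}^{i-1}A_k'\big)B_i'$ is an i.i.d.\ perpetuity, and $\prod_{k=1}^nA_k'=e^{-\xi_{\tau_n(j)}}\to0$ $\PP_\pi$-a.s.\ because $\tau_n(j)\in\TT_j$ and $\lim_{t\in\TT_j}\xi_t=\infty$. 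Moreover $\frace_{(\xi,\eta)}(t)$ converges a.s.\ if and only if $\frace_{(\xi,\eta)}(\tau_n(j))$ converges a.s.\ and the within-cycle oscillation $e^{-\xi_{\tau_{n-1}(j)}}S_n$ tends to $0$ a.s., where $(S_n)$ is i.i.d.\ under $\PP_j$, independent of $\xi_{\tau_{n-1}(j)}$, with $S_1$ equal in law to $\sup_{0<t\leq\tau_1(j)}\big|\int_{(0,t]}e^{-\xi_{s-}}\,d\eta_s\big|$. By the Goldie--Maller criteria \cite{goldiemaller00} --- the convergence criterion for i.i.d.\ perpetuities and the ``$\prod_{k=1}^{n-1}A_k'\,X_n\to0$'' criterion, applied with $X_n=S_n$ and using that $q\mapsto\log q\,/\,\int_{(0,\log q]}\PP_j(\xi_{\tau_1(j)}>u)\,du$ is nonnegative and non-decreasing on $(1,\infty)$ --- the condition $e^{-\xi_{\tau_{n-1}(j)}}S_n\to0$ a.s.\ is precisely \eqref{eq-nesssuffas}, and since $|B_1'|\leq S_1$ it also forces convergence of $\frace_{(\xi,\eta)}(\tau_n(j))$; conversely both are necessary. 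Finally, the equivalence of $\PP_\pi$- and $\PP_j$-almost sure convergence, and the ``for some hence all $j$'' in the hypothesis, follow by regenerating at the first hitting time of a state, which is $\PP_k$-a.s.\ finite for all $k$ by recurrence of $(J_t)$, together with a zero-one law for the behaviour of $\xi$ along sojourns.

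For convergence in probability the key point is to identify \eqref{eq-nesssuffweak} as the Erickson--Maller integral $I_{\xi,\eta}$ of \eqref{eq_EM1} for a bivariate L\'evy process whose characteristics are those of $(\xi^{(j)},\eta^{(j)})$ with the contribution of one excursion away from $j$ --- its $\xi$- and $\eta$-increments and its exponential integral --- added at rate $-q_{j,j}$: this is exactly what the definitions of $A^j_\xi$ and $\bar{\nu}^j_\eta$ encode, so that this effective process reproduces the law of one $j$-cycle, i.e.\ of $(\xi_{\tau_1(j)},\frace_{(\xi,\eta)}(\tau_1(j)))$. Granting this computation, (the proof of) \cite[Thm.~2]{ericksonmaller05} shows that \eqref{eq-nesssuffweak} is equivalent to almost sure convergence of the perpetuity $\frace_{(\xi,\eta)}(\tau_n(j))$ under $\PP_j$, hence --- for an i.i.d.\ perpetuity prob and a.s.\ convergence coincide --- to convergence in $\PP_j$-probability of $\frace_{(\xi,\eta)}(\tau_n(j))$; and since $\xi_{\tau_{n-1}(j)}\to\infty$ a.s.\ makes the within-cycle remainder $e^{-\xi_{\tau_{n-1}(j)}}S_n$ tend to $0$ in $\PP_j$-probability automatically (bounded convergence applied to $\PP_j(\log S_1>\log\varepsilon+\xi_{\tau_{n-1}(j)})$), this upgrades to $\frace_{(\xi,\eta)}(t)\to\frace_{(\xi,\eta)}^\infty$ in $\PP_j$-probability; the passage to all $j$ comes again from hitting-time regeneration (equivalently, from \cite[Thm.~3.4]{alsmeyerbuckmann1}).

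For the converse, if $\liminf_{t\in\TT_j}\xi_t<\infty$ $\PP_j$-a.s.\ for some $j$ or \eqref{eq-nesssuffweak} fails for all $j$, then by the two previous steps the Goldie--Maller/Alsmeyer--Buckmann convergence conditions fail for every $j$, and \cite[Thm.~3.4]{alsmeyerbuckmann1} (equivalently the i.i.d.\ dichotomy of \cite{goldiemaller00}) leaves only the two possibilities \eqref{eq_AB9} and \eqref{eq_AB10}. In the divergence case \eqref{eq_AB10} one transfers $|\frace_{(\xi,\eta)}(T_n)|=|Z_n|\overset{\PP_\pi}\longrightarrow\infty$ to all $t$ by showing that the within-cycle remainder cannot cancel a diverging $|\frace_{(\xi,\eta)}(\tau_{n-1}(j))|$. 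In the degeneracy case \eqref{eq_AB9} reads $e^{-\xi_{T_1}}c_{M_1}+\frace_{(\xi,\eta)}(T_1)=c_{M_0}$ $\PP_\pi$-a.s.; conditioning on $M_0=i$, inserting the L\'evy--It\^o decomposition \eqref{eq_MAP04} of $(\xi^{(i)},\eta^{(i)})$ on $(0,T_1)$ and using that $T_1$ is exponential, of full support, and independent of $Z_1^{(i,M_1)}$, one lets the sojourn length tend to $0$ to obtain $\kappa=c_i$ and thereby extracts both the L\'evy degeneracy \eqref{eq_EM3} for $(\xi^{(i)},\eta^{(i)})$ with constant $c_i$ on every $i$-sojourn and the jump relation $c_i=Z^{(i,j)}_\eta+e^{-Z^{(i,j)}_\xi}c_j$ a.s.\ whenever $q_{i,j}>0$; telescoping these over the sojourns and jumps of a path of $(J_t)$ then yields \eqref{eq_deg00} for all $t\geq0$, and uniqueness of $\{c_j\}$ follows from irreducibility of $(J_t)$ together with the assumption $\xi\not\equiv0$. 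I expect the main obstacle to be precisely this identification/comparison step --- matching \eqref{eq-nesssuffweak} with the Erickson--Maller integral for the effective one-cycle pair, and verifying the Goldie--Maller insensitivity of the integral conditions to whether one uses $|\frace_{(\xi,\eta)}(\tau_1(j))|$, the per-cycle supremum $S_1$, or a cycle increment --- together with the bookkeeping needed to move between the skeleton $(\tau_n(j))$ and continuous time consistently in the almost sure, in-probability, and divergence regimes.
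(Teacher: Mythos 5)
Your proposal follows essentially the same route as the paper: the i.i.d.\ skeleton at the return times $\tau_n(j)$ combined with Goldie--Maller for the almost-sure part, the identification of \eqref{eq-nesssuffweak} as the Erickson--Maller integral of an effective one-cycle (``conflated'') L\'evy process for the in-probability part, and the Alsmeyer--Buckmann dichotomy together with a sojourn-by-sojourn degeneracy analysis for the converse. The technical points you flag as the main obstacles are precisely where the paper invests its effort --- a key-renewal-theorem argument for the length-biased within-cycle remainder in the in-probability case (your ``automatic'' bounded-convergence step overlooks the inspection paradox but the conclusion survives), and a deterministic-time skeleton to transfer divergence in probability from the discrete perpetuity to continuous time --- so your outline is correct.
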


The proof of this theorem is given in Sections \ref{s42}, \ref{s43} and \ref{s44} below, which are devoted to almost sure convergence, convergence in probability and divergence, respectively. In particular Lemma \ref{lem-help2} proves equivalence of $\lim_{t\in \TT_j, t\to \infty} \xi_t=\infty$ $\PP_j$-a.s. for some and for all $j\in \cS$. 

 \begin{remark}
 If $\cS$ is finite and \eqref{eq_deg00} is not valid, but $\frace_{(\xi,\eta)}(t)$ converges in $\PP_j$-probability, then $\frace_{(\xi,\eta)}(t)$ also converges $\PP_\pi$-a.s., i.e. the two types of convergence are equivalent in this case. This is also true in the discrete setting as argued in \cite[Rem. 3.8]{alsmeyerbuckmann1} and the argumentation given there carries over to the continuous-time setting studied here:
 By the above theorem, convergence in probability implies $\lim_{t\in \TT_j, t\to \infty} \xi_t=\infty$ a.s. and \eqref{eq-nesssuffweak}. From the proof of the convergence in probability part of Theorem \ref{thmnessandsuff} (Proposition \ref{prop-weak} below) we will see, that this implies $\mathbb{P}_j$-a.s. convergence of the ``conflated exponential integrals'' $\hat{\frace}_{(\xi,\eta)}(t)$ for all $j\in S$ as they are defined in that proof. But for these one easily verifies 
\begin{align*}
 \lim_{t\to\infty}	|\frace_{(\xi,\eta)}(t)-\frace_{(\xi,\eta)}^\infty|&\leq \lim_{t\to\infty} \max_{j\in S}|\hat{\frace}_{(\xi,\eta)}^j(t)-\frace_{(\xi,\eta)}^\infty| =0 \quad  \PP_\pi\text{-a.s.}
 \end{align*}
  because $\cS$ is finite.
 \end{remark}
 
 \subsection{Examples}
	
	Note that by contrast with the standard L\'evy case, $\xi_t\to \infty$ is not necessary for almost sure convergence of the exponential integral. This is further outlined by the following example.
	
	\begin{example}\label{ex-xinottoinfty}
		Let $\cS=\NN$ and let $(J_t)_{t\geq 0}$ be a continuous time petal flower Markov chain (see e.g. \cite{alsmeyerbuckmann2}) with intensity matrix  
		$$Q=(q_{i,j})_{i,j\in\NN} = \begin{pmatrix}
		-q & q_{1,2} & q_{1,3} & \ldots \\
		q & -q & 0 & \ldots \\
		q & 0 & -q & \\
		\vdots & \vdots & & \ddots
		\end{pmatrix}$$
		for some $q>0$ fixed and $q_{1,j}=q p_{1,j}$ $j=2,3,\ldots$ for transition probabilities $p_{1,j}>0$, $j\in\NN\backslash\{1\}.$ 
		Then $(J_t)_{t\geq 0}$ is an irreducible and recurrent Markov process with stationary distribution
		$$\pi_1=\frac12, \quad \text{and}\quad  \pi_j=\frac{p_{1,j}}{2}=\frac{q_{1,j}}{2q}, \; j=2,3,\ldots$$
		As additive component we choose $\xi$ and $\eta$ to be conditionally independent with $Y_t^{(2)}\equiv 0$, that is the second component of $(\xi_t,\eta_t)_{t\geq 0}$ has no common jumps with $(J_t)_{t\geq 0}$. Further
			$$\xi_t:=X_t^{(2)}: = \sum_{n\geq 1} \sum_{i,j\in\NN} Z_n^{(i,j)} \mathds{1}_{\{J_{T_n-}=i,J_{T_n}=j, T_n\leq t\}}$$
			where 
			$$Z_n^{(i,j)}:=Z^{(i,j)}:=\begin{cases}
			-p_{1,j}^{-1}, & i=1,\\
			2+p_{1,i}^{-1}, & j=1,\\
			0, & \text{otherwise},
			\end{cases}$$
			such that $\EE_1[Z^{(1,J_{T_1})}]=-\infty.$
		 	We then directly observe that 
			$$\xi_{\tau_n(1)} = 2n \to \infty \quad \PP_1\text{-a.s.}$$
			Nevertheless $(\xi_t)_{t\geq 0}$ does not tend to $\infty$ as $t\to \infty$. Indeed, as
			$$\xi_{T_n}=\begin{cases}
			n,& n\text{ even},\\
			n-1 -p_{1,J_{T_n}}^{-1}, & n \text{ odd},
			\end{cases} \quad \text{under }\PP_1,$$
			and as $(\xi_t)_{t\geq 0}$ is constant between two jumps of $(J_t)_{t\geq 0}$, setting $N_t:=\sum_{n\geq 1} \mathds{1}_{\{T_n\leq t\}}$ we clearly obtain
			$$\limsup_{t\to\infty}\frac{\xi_{t}}{t}=\limsup_{t\to\infty}\frac{\xi_{T_{N_t}}}{t} =\lim_{t\to\infty} \frac{N_t}{t} = q \quad \PP_1\text{-a.s}. $$
			This is consistent with the (only formal!) computation of $\kappa_\xi$ which would yield
			\begin{align*}
			\sum_{\substack{(i,j)\in \NN\times \NN \\ i\neq j}} \pi_i  q_{i,j} \EE[Z^{(i,j)}] &= \sum_{i\in\NN} \pi_i \sum_{j\in\NN, j\neq i} q_{i,j} Z^{(i,j)} \\
			&= \pi_1 \sum_{j=2}^\infty q p_{1,j} \left(-p_{1,j}^{-1}\right) + \sum_{i=2}^\infty \pi_i q_{i,1} (2+p_{1,i}^{-1})= q.
			\end{align*}
			On the other hand 
			$$\EE_1[\xi_{T_{2n+1}}]= 2n+\EE_1[\xi_{T_1}]= 2n+\EE_1[Z^{(1,J_{T_1})}]=-\infty, \quad n\in\NN,$$
			which implies for any $x>0$
			$$\sum_{n\geq 0} \PP_1\left(-\xi_{T_{2n+1}} >x \right) = \sum_{n\geq 0} \PP_1\left( -\xi_{T_1} > x+2n \right) =\infty$$
			such that by the Borel-Cantelli lemma we conclude
			$$\liminf_{t\to\infty}\xi_{t}=-\infty \quad \PP_1\text{-a.s.}.$$
			Thus $\xi$ is oscillating. \\	
			Still, choosing $\eta_t=\int_{(0,t]} \gamma_{\eta^{(J_s)}} ds$ with 
			$$\gamma_{\eta^{(j)}}=\begin{cases}
			1,& j=1,\\
			0,& \text{otherwise},
			\end{cases}$$
			we observe that under $\PP_1$ the exponential integral
			\begin{align*}
			\int_{(0,t]} e^{-\xi_{s-}}d\eta_s &= 
			\int_{(0,t]} e^{-\xi_{s-}} \gamma_{\eta^{(J_s)}} ds 
			= \int_{(0,t]} e^{-N_{s-}} \mathds{1}_{\{J_s=1\}} ds
			\end{align*}
			converges $\PP_1$-a.s. as $t\to\infty$.
	\end{example}

	The following example provides a scenario where the exponential integral converges in probability but not almost surely.
	
	\begin{example}\label{ex-weaknotas}
	Set $\cS=\NN_0=\NN\cup \{0\}$ and let $(J_t)_{t\geq 0}$ be a continuous time Markov chain which behaves like the petal flower chain described in Example \ref{ex-xinottoinfty} on $\NN$, but has an additional special state $0$ connected solely to state $2$. More precisely, we set
	$$Q=(q_{i,j})_{i,j\in\NN_0} = \begin{pmatrix}
 -q &0 &q &0 &\ldots \\
 0 & -q & q_{1,2} & q_{1,3} & \ldots \\
 q/2 &	q/2 & -q & 0 & \ldots \\
 0&	q & 0 & -q & \\
\vdots&	\vdots & \vdots & & \ddots
	\end{pmatrix}$$
	for some $q>0$ fixed and $q_{1,j}=q p_{1,j}$ $j=2,3,\ldots$ for transition probabilities $p_{1,j}>0$, $j\in\NN\backslash\{1\}.$ Then $(J_t)_{t\geq 0}$ is an irreducible and recurrent Markov chain.\\
	Further we assume the bivariate L\'evy process $(\xi^{(0)}_t,\eta^{(0)}_t)_{t\geq 0}$ to be such that the exponential integral \eqref{eq:expfunclevy} of $(\xi^{(0)}_t,\eta^{(0)}_t)_{t\geq 0}$ converges a.s., i.e. such that \eqref{eq_EM1} is fulfilled. Apart we set $(\xi^{(j)}_t,\eta^{(j)}_t)\equiv(0,0)$ for all $j\in\NN$. Additionally $X_t^{(2)}\equiv 0$, that is the first component of $(\xi,\eta)$ has no common jumps with $(J_t)_{t\geq 0}$. As second component we assume 
	$$Y_t^{(2)}=\sum_{n\geq 1} \sum_{i,j\in \cS} Z^{(i,j)} \mathds{1}_{\{J_{T_n-}=i, J_{T_n}=j, T_n\leq t \}}$$
	with $$Z^{(0,2)}=Z^{(2,0)}=0, \quad \text{and} \quad Z^{(1,j)}=-Z^{(j,1)}=e^{1/p_{1,j}} \; \text{a.s.}$$
	Then $A_\xi^0(x)=A_{\xi^{(0)}}(x)$ and $\nu_\eta^0(dy) = \nu_{\eta^{(0)}}(dy)$ and hence \eqref{eq-nesssuffweak} is fulfilled by assumption for $j=0$. Hence the exponential integral converges in $\PP_j$-probability. Nevertheless, almost sure convergence is impossible as the integral oscillates with 
	$$-\infty=\liminf_{t\to \infty} \frace_{(\xi,\eta)}(t) < \limsup_{t\to \infty} \frace_{(\xi,\eta)}(t) =\infty \quad \text{a.s.}$$
	as can be shown again using the Borel-Cantelli lemma.
	\end{example}

	\subsection{Degeneracy of $\frace_{(\xi,\eta)}(t)$}\label{s41}
	
	Before we prove Theorem \ref{thmnessandsuff} we will discuss the possible degenerate behaviour of $\frace_{(\xi,\eta)}$ in more detail. This study of degeneracy will rely on a combination of results from \cite{alsmeyerbuckmann1} and \cite{ericksonmaller05}.
	
	 Recall first that degeneracy in the classical L\'evy case $\cS=\{1\}$ is characterized by \eqref{eq_EM3}. To study degeneracy for larger state spaces $\cS$, note that at jump times of $(J_t)_{t\geq 0}$ we can rewrite $\frace_{(\xi,\eta)}$ as
	\begin{align}
	\frace_{(\xi,\eta)}(T_n)
	&= \sum_{i=1}^{n} \left(\prod_{k=1}^{i-1} e^{-(\xi_{T_k}-\xi_{T_{k-1}})}\right) \int_{(T_{i-1},T_i]} e^{-(\xi_{s-}-\xi_{T_{i-1}})} d\eta_s\nonumber\\
	&=:\sum_{i=1}^n \left(\prod_{k=1}^{i-1} A_k \right) B_i, \label{eq_deg01}
	\end{align}
	where 
	$$(A_n,B_n)_{n\in\NN} = \left(e^{-(\xi_{T_n}-\xi_{T_{n-1}})} , \int_{(T_{n-1},T_n]} e^{-(\xi_{s-}-\xi_{T_{n-1}})} d\eta_s  \right)_{n\in\NN}$$
	is a sequence of random vectors modulated by a Markov chain $(M_n)_{n\in \NN}$ which is the discrete time jump chain of $(J_t)_{t\geq 0}$. W.l.o.g. we assume that $(M_n)_{n\in \NN}$  inherits the ergodicity from $(J_t)_{t\geq 0}$ (see the proof of Prop. \ref{prop_frace2conv} for more details). Then its stationary law $\pi_M$ is equivalent to $\pi$ and as shown in \cite[Eq. (17) and Lemma 4.1]{alsmeyerbuckmann1} degeneracy of the Markov modulated perpetuity \eqref{eq_deg01} in the sense of \eqref{eq_AB9} is equivalent to the existence of a unique sequence $\{c_j,j\in\cS\}$ in $\RR$ such that 
	\begin{equation}\label{eq_deg02}
	\PP_j(A_j^1 c_j + B_j^1=c_j)=1 \quad \text{for all/some}\quad j\in\cS,
	\end{equation}
	where in our setting
	$$A_j^1= e^{-\xi_{\tau_1(j)}},\quad \text{and} \quad B_j^1= \int_{(0,\tau_1(j)]} e^{-\xi_{s-}} d\eta_s.$$
	Further, by \cite[Prop. 4.6]{alsmeyerbuckmann1} validity of \eqref{eq_deg02} implies (and is thus equivalent to)
	\begin{equation}\label{eq_deg03} 
	\frace_{(\xi,\eta)}(T_n) = \int_{(0,T_n]} e^{-\xi_{s-}} d\eta_s = c_{M_0}- c_{M_n} e^{-\xi_{T_n}} \quad \PP_{\pi_M}\text{-a.s.} 
	\end{equation}
	for any $n\in\NN$, which in turn is equivalent to \eqref{eq_deg00} 
	for all $t\geq 0$ as will be shown in the following proposition.

	\begin{proposition}\label{prop_degchar}
		Assume there exists a sequence 	$\{c_j,j\in\cS\}$ in $\RR$ such that \eqref{eq_deg03} holds for any $n\in\NN$. Then \eqref{eq_deg00} holds for all $t\geq 0$.
	\end{proposition}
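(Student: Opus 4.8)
The plan is to upgrade the discrete-skeleton hypothesis \eqref{eq_deg03} into a degeneracy statement for each of the governing L\'evy processes $(\xi^{(j)},\eta^{(j)})$, and then to glue these statements together along the excursions of $(J_t)_{t\ge0}$ by the strong Markov property of the MAP. I would work under a fixed $\PP_j$; since $\PP_\pi=\sum_{j\in\cS}\pi_j\PP_j$, it suffices to establish \eqref{eq_deg00} under each $\PP_j$. Two facts are used freely: since $\pi_M$ is equivalent to $\pi$ and $\pi_j>0$ for every $j$ (ergodicity), the hypothesis \eqref{eq_deg03} holds $\PP_j$-a.s.\ for every $j\in\cS$ and every $n\in\NN$; and the ergodic chain $(J_t)_{t\ge0}$ is non-explosive, so $T_n\to\infty$ $\PP_j$-a.s.

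\textbf{Step 1: degeneracy of the governing L\'evy processes.} Under $\PP_j$, on $[0,T_1)$ one has $(\xi_t,\eta_t)=(\xi^{(j)}_t,\eta^{(j)}_t)$, the holding time $T_1$ is exponentially distributed and independent of $(\xi^{(j)},\eta^{(j)})$, and at $T_1$ the additive component picks up the common jump $Z^{(j,M_1)}_1=:(\zeta_\xi,\zeta_\eta)$, so that $\xi_{T_1}=\xi^{(j)}_{T_1}+\zeta_\xi$ and $\frace_{(\xi,\eta)}(T_1)=\frace^{(j)}(T_1)+e^{-\xi^{(j)}_{T_1}}\zeta_\eta$, where $\frace^{(j)}(t):=\int_{(0,t]}e^{-\xi^{(j)}_{s-}}\,d\eta^{(j)}_s$. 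Substituting into \eqref{eq_deg03} with $n=1$ (where $M_0=j$ under $\PP_j$) and rearranging gives
\[
\frace^{(j)}(T_1)=c_j-\bigl(c_{M_1}e^{-\zeta_\xi}+\zeta_\eta\bigr)\,e^{-\xi^{(j)}_{T_1}}\qquad\PP_j\text{-a.s.}
\]
Here $(M_1,\zeta_\xi,\zeta_\eta)$ is independent of $\bigl((\xi^{(j)},\eta^{(j)}),T_1\bigr)$, while the identity forces $K:=c_{M_1}e^{-\zeta_\xi}+\zeta_\eta$ to coincide with the $\sigma\bigl((\xi^{(j)},\eta^{(j)}),T_1\bigr)$-measurable quantity $\bigl(c_j-\frace^{(j)}(T_1)\bigr)e^{\xi^{(j)}_{T_1}}$; since a random variable that is both measurable with respect to, and independent of, a $\sigma$-algebra is a.s.\ constant, we get $K=k$ $\PP_j$-a.s.\ for some $k\in\RR$ and $\frace^{(j)}(T_1)=c_j-k\,e^{-\xi^{(j)}_{T_1}}$ $\PP_j$-a.s. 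Conditioning on $(\xi^{(j)},\eta^{(j)})$ and integrating out the independent exponential $T_1$ (Fubini) yields $\frace^{(j)}(t)=c_j-k\,e^{-\xi^{(j)}_t}$ for Lebesgue-a.e.\ $t$, $\PP_j$-a.s.; since $t\mapsto\frace^{(j)}(t)$ and $t\mapsto\xi^{(j)}_t$ are right-continuous, testing along a countable dense set and letting $t$ decrease upgrades this to: $\PP_j$-a.s., $\frace^{(j)}(t)=c_j-k\,e^{-\xi^{(j)}_t}$ for \emph{all} $t\ge0$; and $k=c_j$ because both sides vanish at $t=0$. (This is precisely the Erickson--Maller degeneracy \eqref{eq_EM3} for $(\xi^{(j)},\eta^{(j)})$, with constant $c_j$.)

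\textbf{Step 2: gluing along excursions.} By the strong Markov property of the MAP at $T_n$, conditionally on $J_{T_n}=m$ the shifted process $(\xi_{T_n+u}-\xi_{T_n},\,\eta_{T_n+u}-\eta_{T_n},\,J_{T_n+u})_{u\ge0}$ has law $\PP_m$, so Step~1 applied to it gives, on $\{J_{T_n}=m\}$ and for $t\in[T_n,T_{n+1})$,
\[
\int_{(T_n,t]}e^{-(\xi_{s-}-\xi_{T_n})}\,d\eta_s=c_m\bigl(1-e^{-(\xi_t-\xi_{T_n})}\bigr)\qquad\PP_j\text{-a.s.}
\]
Plugging this, together with the hypothesis \eqref{eq_deg03} at time $T_n$, into $\frace_{(\xi,\eta)}(t)=\frace_{(\xi,\eta)}(T_n)+e^{-\xi_{T_n}}\int_{(T_n,t]}e^{-(\xi_{s-}-\xi_{T_n})}\,d\eta_s$ and using $J_t=J_{T_n}$ on $[T_n,T_{n+1})$ yields $\frace_{(\xi,\eta)}(t)=c_{J_0}-c_{J_t}e^{-\xi_t}$ for $t\in[T_n,T_{n+1})$, while \eqref{eq_deg03} itself covers $t=T_n$. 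As $T_n\to\infty$ $\PP_j$-a.s., the intervals $[T_n,T_{n+1})$, $n\ge0$, exhaust $[0,\infty)$, so \eqref{eq_deg00} holds for all $t\ge0$ $\PP_j$-a.s., hence $\PP_\pi$-a.s.

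\textbf{Main obstacle.} Step~2 is routine; the substance is Step~1 --- extracting a clean per-state L\'evy degeneracy from a relation that entangles the first holding period with the first modulator transition and the first common jump. The two delicate points there are the independence argument pinning down $c_{M_1}e^{-\zeta_\xi}+\zeta_\eta$ as a deterministic constant, and the passage from the random time $T_1$ (equivalently, Lebesgue-a.e.\ $t$) to \emph{all} $t\ge0$, which relies on the exponential law of $T_1$ together with right-continuity of $t\mapsto\frace^{(j)}(t)$ and of $\xi^{(j)}$.
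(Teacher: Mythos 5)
Your argument is correct, but it is structured quite differently from the one in the paper. The paper proceeds by contradiction: assuming \eqref{eq_deg03} at all jump times but failure of \eqref{eq_deg00} at some deterministic time $t'$, it brackets $t'$ by the next jump time $T'$, first upgrades the failure at $t'$ to failure for \emph{every} constant sequence $\{\tilde c_j\}$, and then uses conditional independence of the pre-$t'$ and post-$t'$ pieces given $(J_t)_{t\geq 0}$ to contradict the identity forced by \eqref{eq_deg03} at $T'$. You instead argue directly: from the $n=1$ skeleton relation you extract, via the independence of $(M_1,Z_1^{(j,M_1)})$ from $\bigl((\xi^{(j)},\eta^{(j)}),T_1\bigr)$ and the zero--one argument pinning down $c_{M_1}e^{-\zeta_\xi}+\zeta_\eta$ as a constant, the Erickson--Maller degeneracy \eqref{eq_EM3} of each governing L\'evy process with constant $c_j$ (using the exponential law of $T_1$ plus right-continuity to de-randomize the time), and then paste these identities along the excursions via the strong Markov property. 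Both routes are sound; the delicate points you flag (the independence/constancy step and the passage from the random time $T_1$ to all $t$) are handled correctly, and the $\PP_{\pi_M}$-to-$\PP_j$ transfer of the hypothesis is legitimate since all $\pi_j>0$. What your approach buys is that it is constructive and yields as a by-product the per-state structure $\int_{(0,t]}e^{-\xi^{(j)}_{s-}}d\eta^{(j)}_s=c_j\bigl(1-e^{-\xi^{(j)}_t}\bigr)$, i.e. a good part of the content of Proposition \ref{prop_degcond}, whereas the paper's contradiction argument is shorter and defers that structural information to the subsequent proposition; the price you pay is an extra layer (Fubini over the exponential holding time and the right-continuity upgrade) that the paper avoids by working at the deterministic time $t'$ directly. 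The only implicit assumption in both arguments is $|\cS|\geq 2$ so that $T_1<\infty$, which is harmless since \eqref{eq_deg03} is vacuous otherwise.
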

	\begin{proof}
		We show this by contradiction and assume there exists a sequence $\{c_j,j\in\cS\}$ in $\RR$ such that \eqref{eq_deg03} holds for all $n\in\NN$, but for any sequence $\{c_j,j\in\cS\}$ there is $t'\geq 0$ such that
		\begin{equation}\label{eq_deg06}
		\PP_\pi\left( \int_{(0,t']} e^{-\xi_{s-}} d\eta_s = c_{J_0}- c_{J_{t'}} e^{-\xi_{t'}}\right)<1.
		\end{equation}
		We choose the sequence $\{c_j,j\in\cS\}$ such that \eqref{eq_deg03} holds for all $n$ and then choose $t'$ such that \eqref{eq_deg06} holds for the given $\{c_j,j\in\cS\}$. \\
		We will first show that 
		\begin{equation}\label{eq_deg07}
		\PP_\pi\left( \int_{(0,t']} e^{-\xi_{s-}} d\eta_s = \tilde{c}_{J_0}- \tilde{c}_{J_{t'}} e^{-\xi_{t'}}\right)<1 \quad \text{for all sequences } \{\tilde{c}_j,j\in\cS\}.
		\end{equation}
		To do so, let $T'$ be the first jump time of $J$ after $t'$, i.e. $T':=\inf\{t\geq t': J_t\neq J_{t-}\}$. Then from \eqref{eq_deg03} $\PP_\pi$-a.s.
		\begin{equation}
		\int_{(0,t']} e^{-\xi_{s-}} d\eta_s + \int_{(t',T']} e^{-\xi_{s-}} d\eta_s = \int_{(0,T']} e^{-\xi_{s-}} d\eta_s = c_{J_0}- c_{J_{T'}} e^{-\xi_{T'}}.\label{eq_deg08}
		\end{equation}
		Assume there exists a sequence $\{\tilde{c}_j,j\in\cS\}$ such that
		\begin{equation}\label{eq-deg09} \PP_\pi\left( \int_{(0,t']} e^{-\xi_{s-}} d\eta_s = \tilde{c}_{J_0}- \tilde{c}_{J_{t'}} e^{-\xi_{t'}}\right)=1,\end{equation}
		then from \eqref{eq_deg08} $\PP_\pi$-a.s.
		\begin{align*}
		\tilde{c}_{J_0}- \tilde{c}_{J_{t'}} e^{-\xi_{t'}}  & = c_{J_0}- c_{J_{T'}} e^{-\xi_{T'}}  - \int_{(t',T']} e^{-\xi_{s-}} d\eta_s\\
		\Leftrightarrow \tilde{c}_{J_0} - c_{J_0}  & =    e^{-\xi_{t'}}\left( \tilde{c}_{J_{t'}} - c_{J_{T'}} e^{-(\xi_{T'}- \xi_{t'})}  - \int_{(t',T']} e^{-(\xi_{s-}-\xi_{t'})} d\eta_s \right), 
		\end{align*}
		where the two factors on the right hand side are (conditionally on $(J_t)_{t\geq 0}$) independent, while the left hand side is a constant. Thus we deduce
		$$\tilde{c}_{J_0} - c_{J_0} =0= \tilde{c}_{J_{t'}} - c_{J_{T'}} e^{-(\xi_{T'}- \xi_{t'})}  - \int_{(t',T']} e^{-(\xi_{s-}-\xi_{t'})} d\eta_s \quad \PP_\pi\text{-a.s.}$$
		which implies $\{c_j,j\in\cS\}=\{\tilde{c}_j,j\in\cS\}$ in contradiction to \eqref{eq-deg09}, such that \eqref{eq_deg07} is true. \\
		Finally, to prove the assertion of the proposition note that from \eqref{eq_deg08} we have $\PP_\pi$-a.s.
		\begin{align*}
		\int_{(0,t']} e^{-\xi_{s-}} d\eta_s &= \int_{(0,T']} e^{-\xi_{s-}} d\eta_s  - \int_{(t',T']} e^{-\xi_{s-}} d\eta_s \\
		&= c_{J_0}- c_{J_{T'}} e^{-\xi_{T'}}  - e^{-\xi_{t'}} \int_{(t',T']} e^{-(\xi_{s-}-\xi_{t'})} d\eta_s\\
		&= c_{J_0}-  e^{-\xi_{t'}}\left( c_{J_{T'}} e^{-(\xi_{T'}-\xi_{t'})}  +  \int_{(t',T']} e^{-(\xi_{s-}-\xi_{t'})} d\eta_s \right).
		\end{align*}
		Conversely, from \eqref{eq_deg07} and due to independence
		\begin{align*}
		\PP_\pi \Bigg( \int_{(0,t']} e^{-\xi_{s-}} d\eta_s &= c_{J_0}-  e^{-\xi_{t'}}\underbrace{\left( c_{J_{T'}} e^{-(\xi_{T'}-\xi_{t'})}  +  \int_{(t',T']} e^{-(\xi_{s-}-\xi_{t'})} d\eta_s \right)}_{=:f((\xi_s,\eta_s,J_s)_{t'<s\leq T'})} \Bigg) \\
		&= \EE\Bigg[ \PP_\pi\left( \int_{(0,t']} e^{-\xi_{s-}} d\eta_s = c_{J_0}-  C e^{-\xi_{t'}} \right) \big| f((\xi_s,\eta_s,J_s)_{t'<s\leq T'})=C \Bigg]<1
		\end{align*}
		which yields the desired contradiction.
	\end{proof}
	
	\begin{remark}
		Note that uniqueness of the sequence $\{c_j,j\in\cS\}$ in \eqref{eq_deg03} as shown in \cite{alsmeyerbuckmann1} directly implies uniqueness of the sequence $\{c_j,j\in\cS\}$ in \eqref{eq_deg00}.
	\end{remark}
	
	The following proposition gives a necessary and sufficient condition for a degenerate behaviour of the exponential integral in terms of $(\xi_t,\eta_t,J_t)_{t\geq 0}$.

	\begin{proposition}\label{prop_degcond}
		Assume \eqref{eq_deg00} holds for all $t\geq 0$ and some sequence $\{c_j, j\in\cS\}$. Then 
		\begin{equation}\label{eq_deg13}
		\eta_t= -\int_{(0,t]} c_{J_{s-}} dU_s - \int_{(0,t]} dc_{J_s}, \quad t \geq 0,\; \PP_\pi\text{-a.s.}
		\end{equation}
		where $(U_t)_{t\geq 0}= (\Log(e^{-\xi_t}))_{t\geq 0}$ is the stochastic logarithm of $(e^{-\xi_t})_{t\geq 0}$, i.e. the unique solution of the SDE $dU_t = e^{\xi_{t-}}d e^{-\xi_t}$, $t\geq 0$, $U_0=0$, given by
		\begin{equation}\label{eq_deg14}
		U_t= - \xi_t + \sum_{0<s\leq t} (e^{-\Delta \xi_s}-1+\Delta \xi_s) + \frac{1}{2} \int_{(0,t]} \sigma_{\xi^{(J_s)}}^2 ds, \quad t\geq 0.
		\end{equation}
		Conversely, if \eqref{eq_deg13} holds for some sequence $\{c_j, j\in\cS\}$, then \eqref{eq_deg00} is fulfilled for all $t\geq 0$.
	\end{proposition}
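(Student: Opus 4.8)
The plan is to read \eqref{eq_deg00} as an integral equation for $\eta$ and to invert it pathwise, exploiting that $V_t:=e^{-\xi_t}$ is a strictly positive semimartingale. As a preliminary I would pin down $U$: applying It\^o's formula to the function $x\mapsto e^{-x}$ and to $\xi$ (whose continuous martingale part has quadratic variation $\int_{(0,\cdot]}\sigma_{\xi^{(J_s)}}^2\,ds$, cf.\ the L\'evy--It\^o-type decomposition of $\xi$, the analogue of \eqref{eq_MAP04}) gives
$$d e^{-\xi_t}=-e^{-\xi_{t-}}\,d\xi_t+\tfrac{1}{2}\,e^{-\xi_{t-}}\sigma_{\xi^{(J_t)}}^2\,dt+\big(e^{-\xi_t}-e^{-\xi_{t-}}+e^{-\xi_{t-}}\Delta\xi_t\big);$$
multiplying by $e^{\xi_{t-}}$ and integrating yields \eqref{eq_deg14}, so that $U$ is a well-defined semimartingale with $U_0=0$, $dV_t=V_{t-}\,dU_t$, $\Delta U_t=e^{-\Delta\xi_t}-1$ and $\Delta V_t=V_{t-}\Delta U_t$; since $V_{t-}>0$ everywhere, $U=\Log V$ is the stochastic logarithm of $V$.

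For the forward direction assume \eqref{eq_deg00}, i.e.\ $\frace_{(\xi,\eta)}(t)=\int_{(0,t]}V_{s-}\,d\eta_s=c_{J_0}-c_{J_t}V_t$ for all $t\geq0$, $\PP_\pi$-a.s. Because $V_{s-}$ and $V_{s-}^{-1}=e^{\xi_{s-}}$ are locally bounded and predictable, associativity of the stochastic integral lets me divide by $V_{s-}$: $\eta_t=\int_{(0,t]}V_{s-}^{-1}\,d\big(\frace_{(\xi,\eta)}(s)\big)$ for all $t$. Substituting $\frace_{(\xi,\eta)}(s)=c_{J_0}-c_{J_s}V_s$ and expanding by the integration-by-parts formula for semimartingales,
$$d(c_{J_s}V_s)=c_{J_{s-}}\,dV_s+V_{s-}\,dc_{J_s}+d[c_J,V]_s ,$$
then dividing through by $V_{s-}$ and using $\Delta V_s=V_{s-}\Delta U_s$ together with the fact that $c_J$ is a pure-jump process of finite variation (so $V_{s-}^{-1}\,d[c_J,V]_s=d[c_J,U]_s$, a process supported on the at most countably many common jump times of $(J_t)$ and $(\xi_t)$), one obtains
$$d\eta_s=-c_{J_{s-}}\,dU_s-dc_{J_s}-d[c_J,U]_s=-c_{J_s}\,dU_s-dc_{J_s} ,$$
which is the degeneracy relation \eqref{eq_deg13} (at a jump time $s$ of $J$ at which $\xi$ jumps too the integrand against $dU_s$ is to be read as $c_{J_s}$; off such times $c_{J_s}=c_{J_{s-}}$).

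The converse runs the same computation backwards: starting from \eqref{eq_deg13} and inserting it into $\frace_{(\xi,\eta)}(t)=\int_{(0,t]}V_{s-}\,d\eta_s$, use $V_{s-}\,dU_s=dV_s$ (and $V_{s-}^{-1}\,d[c_J,V]_s=d[c_J,U]_s$ once more) to rewrite the right-hand side as $-\int_{(0,t]}c_{J_{s-}}\,dV_s-\int_{(0,t]}V_{s-}\,dc_{J_s}-[c_J,V]_t$, which by integration by parts equals $c_{J_0}V_0-c_{J_t}V_t=c_{J_0}-c_{J_t}V_t$; this is \eqref{eq_deg00}. All processes here are c\`adl\`ag semimartingales, so the identities hold for all $t\geq0$, $\PP_\pi$-a.s.

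The one genuinely delicate point I expect is precisely this jump bookkeeping at instants where $(J_t)$ and $(\xi_t)$ jump simultaneously: there the product rule truly contributes the extra covariation $[c_J,V]$, and one has to check that it recombines with $\int c_{J_{s-}}\,dU_s$ so that the jumps of the two sides of \eqref{eq_deg13} and \eqref{eq_deg00} match exactly. The remaining ingredients — that $e^{-\xi}$ is a positive semimartingale whose stochastic logarithm is given by \eqref{eq_deg14}, and that dividing $d\big(\frace_{(\xi,\eta)}(s)\big)=V_{s-}\,d\eta_s$ by $V_{s-}$ is legitimate — are routine consequences of It\^o's formula and of the associativity of the stochastic integral for locally bounded predictable integrands.
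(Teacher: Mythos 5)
Your argument is correct, but it takes a genuinely different route from the paper's. The paper proceeds piecewise: it first restricts \eqref{eq_deg00} to $t<T_1$ under $\PP_j$ and invokes uniqueness of the solution of the Dol\'eans-Dade SDE $d\cE(U^{(j)})_t=\cE(U^{(j)})_{t-}dU^{(j)}_t$ to force $\eta^{(j)}_t=-c_jU^{(j)}_t$ (Eq.\ \eqref{eq_deg09}); it then isolates the jump at $T_1$ and derives the algebraic constraint $c_k e^{-Z^{(j,k)}(\xi)}+Z^{(j,k)}(\eta)=c_j$, i.e.\ the support condition \eqref{eq_deg12} on the joint jump laws $F^{(i,j)}$; finally it reassembles these two ingredients into \eqref{eq_deg13}. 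You instead invert the integral equation globally: associativity of the stochastic integral lets you write $\eta_t=\int_{(0,t]}e^{\xi_{s-}}\,d\big(\frace_{(\xi,\eta)}(s)\big)$, and a single integration by parts applied to $c_{J_t}e^{-\xi_t}$ produces both the between-jumps identity and the jump identity at once, with the covariation term $[c_J,V]$ doing exactly the bookkeeping that the paper handles by hand at the times $T_n$. Both arguments are sound; the paper's buys the structural by-products \eqref{eq_deg09} and \eqref{eq_deg12}, which are reused later (e.g.\ in the divergence proof via \eqref{eq-degconflated2} and in the remark on degenerate $F^{(i,j)}$), whereas yours is shorter and treats the two directions of the equivalence symmetrically. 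Two small remarks. First, you correctly observe that after recombining $-c_{J_{s-}}\,dU_s-d[c_J,U]_s$ the integrand against $dU$ at a common jump time of $J$ and $\xi$ must be read as $c_{J_s}$ rather than $c_{J_{s-}}$; this matches the last display of the paper's own proof (which also ends with $-\int_{(0,t]}c_{J_s}\,dU_s$) and is the correct jump convention — one checks directly that $-c_{J_{T_n}}\Delta U_{T_n}-\Delta c_{J_{T_n}}=c_{J_{T_n-}}-c_{J_{T_n}}e^{-\Delta\xi_{T_n}}=\Delta\eta_{T_n}$, so the $c_{J_{s-}}$ in the displayed statement of \eqref{eq_deg13} should be understood with this correction. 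Second, your use of local boundedness of the predictable integrands $e^{\xi_{s-}}$ and $c_{J_{s-}}$ (c\`agl\`ad adapted processes) is exactly what legitimizes the division and the integration by parts, so no gap remains there.
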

	\begin{proof}
		Recall first that by \cite[Thm. II.37]{protter} for all $j\in\cS$ it holds $e^{-\xi^{(j)}_t}=\cE(U^{(j)})_t$ for some L\'evy processes $U^{(j)}$ defined by 
		\begin{equation} \label{eq_deg10} U^{(j)}_t= - \xi^{(j)}_t + \sum_{0<s\leq t} (e^{-\Delta \xi^{(j)}_s}-1+\Delta \xi^{(j)}_s) + t \frac{\sigma_{\xi^{(j)}}^2}{2}, \quad t\geq 0,\, j\in\cS,\end{equation}
		where $(\cE(U^{(j)})_t)_{t\geq 0}$ denotes the Doleans-Dade stochastic exponential of $(U^{(j)}_t)_{t\geq 0}$, i.e. the unique solution of the SDE
		\begin{equation} \label{eq_deg10b} d\cE(U^{(j)})_t =  \cE(U^{(j)})_{t-} dU^{(j)}_t, \quad \cE(U^{(j)})_0=1.\end{equation}
		Now assume \eqref{eq_deg00} for all $t\geq 0$, then for all $j\in\cS$  we observe immediately $\PP_j$-a.s. for $t<T_1$
		\begin{align*}
		\int_{(0,t]}e^{-\xi^{(j)}_{s-}}d\eta^{(j)}_s 
		&= 	\int_{(0,t]}e^{-\xi_{s-}}d\eta_s 
		= c_{J_0}-c_{J_t}e^{-\xi_t}
		= c_j - c_j e^{-\xi^{(j)}_t} = c_j (1-e^{-\xi^{(j)}_t}),
		\end{align*} 
		that is, in terms of $U^{(j)}$ from \eqref{eq_deg10},
		$$\int_{(0,t]} \cE(U^{(j)})_{s-} d\eta^{(j)}_s  = c_j (1- \cE(U^{(j)})_t) \quad a.s.$$
		and as 	$(\cE(U^{(j)})_t)_{t\geq 0}$ uniquely solves \eqref{eq_deg10b} this implies $\eta_t^{(j)}=c_j U^{(j)}_t$ a.s. for all $t<T_1$ which prolonges to $t\geq 0$ due to the L\'evy properties. Thus 
		\begin{equation}\label{eq_deg09}
		\eta^{(j)}_t=- c_j U^{(j)}_t= - c_j \Log (e^{-\xi^{(j)}_t} )
		\quad \text{for all } j\in\cS,
		\end{equation} 
		is a necessary condition for \eqref{eq_deg00}.\\
		Further, if $\cS=\{1\}$, then the computation leading to \eqref{eq_deg09} extends to all $t\geq 0$ and there is nothing more to show. Thus assume $\cS$ consists of at least two different states such that $T_1<\infty$ $\PP_\pi$-a.s. due to the recurrency of $(J_t)_{t\geq 0}$. Then from \eqref{eq_deg00} $\PP_j$-a.s. for all $j\in\cS$  with $k=J(T_1)$
		\begin{align*}
		\int_{(0,T_1)}e^{-\xi^{(j)}_{s-}}d\eta^{(j)}_s + e^{-\xi^{(j)}_{T_1-}}Z^{(j,k)}_1(\eta)
		&= \int_{(0,T_1]}e^{-\xi_{s-}}d\eta_s  
		= c_{J_0}-c_{J_{T_1}}e^{-\xi_{T_1}} \\
		&= c_j - c_k e^{-\xi_{T_1-} - Z^{(j,k)}_1(\xi)}, 
		\end{align*}
		where $Z^{(i,j)}_1(\xi)=\Delta \xi_{T_1}$ and $Z^{(i,j)}_1(\eta)=\Delta \eta_{T_1}$ are the first and second component of $Z^{(i,j)}_1$ in \eqref{eq_MAP02}, respectively. But as we already know that  
		$$\int_{(0,T_1)}e^{-\xi^{(j)}_{s-}}d\eta^{(j)}_s = c_{j}-c_{j}e^{-\xi_{T_1-}}, \quad \PP_j\text{-a.s. for all }j\in\cS, $$
		we conclude $\PP_j$-a.s. for all $j\in\cS$ 
		\begin{align*}
		c_j - c_k e^{-\xi_{T_1-} - Z^{(j,k)}_1(\xi)} - e^{-\xi^{(j)}_{T_1-}}Z^{(j,k)}_1(\eta) &= c_{j}-c_{j}e^{-\xi_{T_1-}}\\
		\Rightarrow	c_k e^{- Z^{(j,k)}_1(\xi)} +  Z^{(j,k)}_1(\eta) &= c_{j}
		\end{align*}
		which yields that necessarily for all $(i,j)$ in $\cS^2$, $i\neq j$,
		\begin{equation}
		\label{eq_deg12}
		\supp(F^{(i,j)})=\{(x,y)\in\RR^2: y=c_i-c_j e^{-x} \}.
		\end{equation}
		Finally, to show \eqref{eq_deg13} note first that by \cite[Thm. II.37]{protter} it follows directly from the definition of $(U_t)_{t\geq 0}$ in \eqref{eq_deg14} that $\cE(U)_t=e^{-\xi_t}$. Further it is clear that 
		$$\Delta U_t + 1 = e^{-\Delta \xi_t} \quad \text{for all } t\geq 0,$$
		such that \eqref{eq_deg12} implies for all $n\in\NN$
		\begin{align*}
		\Delta  \eta_{T_n} &= c_{J_{T_n-}} - c_{J_{T_n}} e^{-\Delta \xi_{T_n}} = c_{J_{T_n-}}  - c_{J_{T_n}} (1+ \Delta U_{T_n})= - c_{J_{T_n}}\Delta U_{T_n} - \Delta c_{J_{T_n}}.
		\end{align*}
		Together with \eqref{eq_deg09} this yields 
		\begin{align*}
		\eta_t &= -\int_{(0,t]} c_{J_s}dU^{(J_s)}_s + \sum_{n: T_n\leq t} \Delta \eta_{T_n} = -\int_{(0,t]} c_{J_s}dU^{(J_s)}_s - \sum_{n: T_n\leq t}( c_{J_{T_n}}\Delta U_{T_n} + \Delta c_{J_{T_n}})\\
		&= -\int_{(0,t]} c_{J_s}dU_s  - \int_{(0,t]} dc_{J_s},
		\end{align*}
		which is \eqref{eq_deg13}.\\	
		The converse can be shown by direct computation. 
	\end{proof}

	\begin{remark}
		From Equation \eqref{eq_deg13} it follows that in the case $(X_t^{(2)},Y^{(2)}_t)\equiv 0$ (that is $\Delta \xi_{T_n}=0=\Delta \eta_{T_n}$ $\PP_\pi$-a.s. for all $n$) the sequence $\{c_i,i\in\cS\}$ has to be constant equal to some $c\in\RR$ and \eqref{eq_deg13} can be simplified to
		$$\eta_t= - c U_t= - c \,\Log (e^{-\xi_t})  \quad \text{for all } t\geq 0,$$
		which is equivalent to \eqref{eq_EM3} in the L\'evy case where $\cS=\{1\}$ (see e.g. \cite[Cor. 2.3]{behmelindnermaller}). \\
		If \eqref{eq_deg13} holds and $\Delta \xi_{T_n}=0$ $\PP_\pi$-a.s. for all $n$, then $F^{(i,j)}_\eta$ degenerates to an atom in $c_i-c_j$, while if $\Delta \eta_{T_n}=0$ $\PP_\pi$-a.s.  for all $n$, then $F^{(i,j)}_\xi$ degenerates to an atom in $\log \frac{c_j}{c_i}$ which implies that this can only happen if either $c_i>0, \forall i\in \cS$, or  $c_i<0, \forall i\in \cS$, (or $c_i\equiv 0$ which would correspond to the trivial and excluded case $\eta \equiv 0$). 
	\end{remark}

	\subsection{Proof of Theorem \ref{thmnessandsuff}: Almost sure convergence}\label{s42}
	
	For the proof of the convergence statements in Theorem \ref{thmnessandsuff} we need the following lemma. Note that the introduced process $\hat{\xi}$ is obtained from $\xi$ by  ``conflating'' the excursions of $\xi$ for $t\not\in\TT_j$ to single jumps and identifying the $n$-th exit and $n$-th return time of $j$. Other appearing processes will be conflated likewise when this is necessary.
	
	\begin{lemma}\label{lem-help1}
		Fix $j\in \cS$ and recall the sojourn time $\TT_j:=\{t\geq 0: J_t=t\}.$
		Then under $\PP_j$ the \emph{conflated process} $(\hat{\xi}_t)_{t\geq 0}:=(\hat{\xi}^j_t)_{t\geq 0}$ given by
		$$\hat{\xi}^j_t:= \xi_{t+\sum_{k=1}^n (\tau_k(j)-\tau^-_k(j))} \quad \text{for} \quad t\in \Big[\tau_n^-(j) - \sum_{k=1}^{n-1} (\tau_k(j)-\tau^-_k(j)), \tau_{n+1}^-(j) - \sum_{k=1}^n (\tau_k(j)-\tau^-_k(j))\Big), $$
		is a L\'evy process with characteristic triplet $(\gamma_{\xi^{(j)}}, \sigma^2_{\xi^{(j)}}, \hat{\nu})$,
		where $$\hat{\nu}(dx)=\nu_{\xi^{(j)}}(dx)-q_{j,j}\PP_j(\xi_{\tau_1(j)}-\xi_{\tau_1^-(j)}\in dx).$$
	\end{lemma}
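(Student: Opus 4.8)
The claim is that the ``conflated'' process $\hat\xi^j$ obtained under $\PP_j$ by excising the sojourns of $(J_t)$ outside state $j$ is a L\'evy process with the stated triplet. The plan is to first verify that $\hat\xi^j$ has stationary and independent increments, and then to identify its characteristic exponent (equivalently, its L\'evy triplet) by a direct computation at the level of the Laplace/characteristic functional, exploiting the MAP structure and the strong Markov property of $(J_t)$.

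\emph{Step 1: Independence and stationarity of increments.} Fix $j$ and work under $\PP_j$. The key structural fact is that the successive return times $\tau_n(j)$ are (by the strong Markov property of $(J_t)$ together with the fact that, conditionally on the Markovian component, the additive component has independent increments) regeneration epochs for $\xi$: the pieces $\bigl(\xi_{t}-\xi_{\tau_{n-1}(j)}\bigr)_{\tau_{n-1}(j)\le t<\tau_n(j)}$ together with the associated excursions of $J$ are i.i.d. across $n$. The conflation map deletes exactly the time intervals $[\tau_k^-(j),\tau_k(j))$ during which $J\neq j$, re-glues the remaining portions, and the deleted jump $\xi_{\tau_k(j)}-\xi_{\tau_k^-(j)}$ is reinserted as a single jump at the glued time point. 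Because the restarted pieces are i.i.d., $\hat\xi^j$ has i.i.d.\ increments over the random blocks $[\tau_{n-1}(j),\tau_n^-(j))$ shifted to adjacent position; one then upgrades this to stationary independent increments over \emph{all} (deterministic) time windows by the usual argument: within a block $[\tau_{n-1}(j),\tau_n^-(j))$ (i.e.\ while $J=j$), $\hat\xi^j$ evolves exactly as the L\'evy process $\xi^{(j)}$ plus, at the right endpoint, the conflated jump; and the lack-of-memory / renewal property across blocks makes the total process time-homogeneous with independent increments. So $\hat\xi^j$ is a L\'evy process.

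\emph{Step 2: Identifying the triplet.} It remains to compute the characteristic exponent $\Psi(\theta)$ of $\hat\xi^j$, i.e.\ $\EE_j[e^{i\theta\hat\xi^j_1}]=e^{-\Psi(\theta)}$, or equivalently (often cleaner) the generator/Laplace exponent via the rate of accumulation of increments. The natural route is to decompose the infinitesimal dynamics: while $J=j$, $\hat\xi^j$ behaves like $\xi^{(j)}$, contributing the exponent of $\xi^{(j)}$, namely drift $\gamma_{\xi^{(j)}}$, Gaussian part $\sigma^2_{\xi^{(j)}}$, and L\'evy measure $\nu_{\xi^{(j)}}$. Superimposed on this is a Poissonian stream of ``conflated jumps'': each time $J$ leaves $j$ (which happens at rate $-q_{j,j}=\sum_{k\ne j}q_{j,k}$ while in $j$) an excursion begins, and upon its completion a jump of size $\xi_{\tau_1(j)}-\xi_{\tau_1^-(j)}$ (distributed as under $\PP_j$) is added at the conflated epoch. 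Since after conflation the excursion durations are removed, these conflated jumps arrive as a Poisson process of rate $-q_{j,j}$ in conflated time, with jump-size law $\PP_j(\xi_{\tau_1(j)}-\xi_{\tau_1^-(j)}\in dx)$; this contributes a compound-Poisson term with L\'evy measure $-q_{j,j}\,\PP_j(\xi_{\tau_1(j)}-\xi_{\tau_1^-(j)}\in dx)$. Hence
$$\hat\nu(dx)=\nu_{\xi^{(j)}}(dx)-q_{j,j}\,\PP_j\bigl(\xi_{\tau_1(j)}-\xi_{\tau_1^-(j)}\in dx\bigr),$$
while the Gaussian coefficient $\sigma^2_{\xi^{(j)}}$ is untouched (excursions are removed, not rescaled) and, after assembling the compensators consistently with the chosen cutoff in the L\'evy--Khintchine formula, the drift remains $\gamma_{\xi^{(j)}}$. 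To make Step 2 rigorous I would condition on $(J_t)_{t\ge0}$, use the L\'evy--It\^o decomposition \eqref{eq_MAP04} for the additive component given the Markovian component, and compute $\EE_j[e^{i\theta(\hat\xi^j_{t+s}-\hat\xi^j_t)}]$ by splitting the conflated time interval into the portion spent in $j$ and the point masses where conflated jumps sit, then taking expectations over the excursion structure using that the number of excursions completed in conflated time length $u$ is Poisson$(-q_{j,j}u)$.

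\emph{Main obstacle.} The delicate point is Step 1 — making the conflation bookkeeping precise, in particular checking that the \emph{deterministic}-time increments of $\hat\xi^j$ are genuinely independent and stationary, not merely the increments over the random regeneration blocks. One must argue carefully that conditioning on the past of $\hat\xi^j$ up to a fixed conflated time $t$ reveals only: (i) the L\'evy-$\xi^{(j)}$ path on the current block up to $t$, and (ii) how many conflated jumps have occurred; and that by the strong Markov property of $(J_t,\xi_t)$ at exit/return times, the future is an independent copy of the same dynamics started afresh. A clean way to package this is to note that $\hat\xi^j$ is adapted to a filtration in which the regeneration times become stopping times and then invoke a renewal/embedding argument (as in \cite[Thm. 19.3]{sato2nd} for the conditional L\'evy--It\^o picture, combined with the MAP strong Markov property). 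Once independence and stationarity are secured, the triplet computation in Step 2 is essentially a routine characteristic-function calculation.
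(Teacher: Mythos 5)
Your proposal is correct and follows essentially the same route as the paper: both establish independence and stationarity of increments from the MAP/regeneration structure at the exit and return times of state $j$ (the paper phrases this via the conditional-expectation identity \cite[Eq.\ XI.2.1]{asmussen}, you via the strong Markov property at regeneration epochs), and both identify the triplet by observing that $\hat\xi^j$ evolves as $\xi^{(j)}$ during sojourns in $j$ while the conflated excursion increments arrive as a superimposed compound Poisson stream at rate $-q_{j,j}$ with jump law $\PP_j(\xi_{\tau_1(j)}-\xi_{\tau_1^-(j)}\in dx)$. Your write-up is, if anything, more explicit than the paper's about why the conflated jumps form a Poisson stream in conflated time (the memoryless sojourn times), so no gap to report.
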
 
	\begin{proof}
	Clearly $\hat{\xi}_0=0$ $\PP_j$-a.s. since $\xi_0=0$ $\PP_j$-a.s. Further for $t\in \TT_j$ the process $\xi_t$ equals in law a L\'evy process with triplet $(\gamma_{\xi^{(j)}}, \sigma^2_{\xi^{(j)}}, \nu_{\xi^{(j)}})$ and thus $\hat{\xi}$ inherits independent increments and c\`adl\`ag paths of $\xi$. Stationarity of the increments follows from a standard property of MAPs, namely by \cite[Eq. XI.2.1]{asmussen}
	\begin{align*}
	\EE_j[f(\hat{\xi}_{t+s}-\hat{\xi}_t))]
	&= \EE_j[f(\xi_{t+s+ \sum_{k=1}^{n(s+t)} (\tau_k(j)-\tau^-_k(j))} -\xi_{t+\sum_{k=1}^{n(t)} (\tau_k(j)-\tau^-_k(j) )})]\\
	&= \EE_j\big[\EE_j[f(\xi_{t+s+ \sum_{k=1}^{n(s+t)} (\tau_k(j)-\tau^-_k(j))} -\xi_{t+\sum_{k=1}^{n(t)} (\tau_k(j)-\tau^-_k(j) )})| \cF_{t+\sum_{k=1}^{n(t)} (\tau_k(j)-\tau^-_k(j) )}]\big]\\
	&= \EE_{j}[f(\xi_{s+\sum_{k=n(t)+1}^{n(t+s)}(\tau_k(j)-\tau^-_k(j) ) })], \qquad \forall f\in C_c(\RR), 
	\end{align*}
	where $\sum_{k=n(t)+1}^{n(t+s)}(\tau_k(j)-\tau^-_k(j)) \overset{d}= \sum_{k=1}^{n(s)}(\tau_k(j)-\tau^-_k(j))$, such that we conclude
	\begin{align*}
	\EE_j[f(\hat{\xi}_{t+s}-\hat{\xi}_t))]&= \EE_j[f(\hat{\xi}_{s})] \qquad \forall f\in C_c(\RR).
	\end{align*}	
	Finally, the form of the jump measure $\hat{\nu}$ results from adding the jumps due to conflation which happen at rate $-q_{j,j}$ to the L\'evy measure $\nu_{\xi^{(j)}}$.
	\end{proof}	
	
	We also observe the following useful solidarity property.
	\begin{lemma}\label{lem-help2}
		Consider the process $(\hat{\xi}^j_t)_{t\geq 0}$ as in Lemma \ref{lem-help1}. Then
		$\lim_{t\to\infty} \hat{\xi}^j_t = \infty$ $\PP_j$-a.s. for some $j\in \cS$ if and only if $\lim_{t\in \TT_j, t\to \infty} \xi_t=\infty$ $\PP_j$-a.s. for all $j\in \cS$.
	\end{lemma}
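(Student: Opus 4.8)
The plan is to split the equivalence into a \emph{per-state correspondence} and a \emph{solidarity across states}, and --- crucially --- to reformulate ``$\hat\xi^j$ drifts to $+\infty$'' as ``$\hat\xi^j$ is bounded below''; the latter is the shape that makes the solidarity argument go through.

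\emph{Per-state correspondence.} Fix $j\in\cS$. By Lemma~\ref{lem-help1} the conflated process $\hat\xi^j$ is a L\'evy process with $\hat\xi^j_0=0$, and it is genuinely defined on all of $[0,\infty)$ because the total sojourn time of $J$ in $j$ is a.s.\ infinite (it is the sum of i.i.d.\ $\mathrm{Exp}(-q_{j,j})$-variables; if $\cS=\{j\}$ the lemma is vacuous). By construction $\hat\xi^j$ is just the trajectory of $\xi$ on the blocks $[\tau_n(j),\tau^-_{n+1}(j))$, $n\ge 0$, whose union under $\PP_j$ is $\TT_j$, concatenated after deleting the intervening gaps; deleting those gaps affects neither the behaviour as $t\to\infty$ nor the overall infimum, so $\inf_{t\ge 0}\hat\xi^j_t=\inf_{t\in\TT_j}\xi_t$ and $\lim_{t\to\infty}\hat\xi^j_t=\infty$ iff $\lim_{t\in\TT_j,\,t\to\infty}\xi_t=\infty$. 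Since a L\'evy process started at $0$ drifts to $+\infty$ a.s.\ if and only if its overall infimum is a.s.\ finite (L\'evy trichotomy), the three conditions
\[
\lim_{t\in\TT_j,\,t\to\infty}\xi_t=\infty,\qquad \lim_{t\to\infty}\hat\xi^j_t=\infty,\qquad \inf_{t\in\TT_j}\xi_t>-\infty
\]
are $\PP_j$-a.s.\ equivalent, and each holds with probability $0$ or $1$. Finally, applying the strong Markov property at the a.s.\ finite first hitting time of $j$ under $\PP_k$ (where $\xi$ is a.s.\ finite) shows that $\PP_k(\inf_{t\in\TT_j}\xi_t>-\infty)$ does not depend on $k$. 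Hence the lemma reduces to: \emph{if $\inf_{t\in\TT_j}\xi_t>-\infty$ $\PP_\pi$-a.s.\ for some $j$, then $\inf_{t\in\TT_i}\xi_t>-\infty$ $\PP_\pi$-a.s.\ for every $i$.}

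\emph{Solidarity.} I prove the contrapositive. Assume $\PP_\pi(E)>0$, where $E:=\{\inf_{t\in\TT_i}\xi_t=-\infty\}$. On $E$ build stopping times $0=:u_0<t_1<u_1<t_2<u_2<\cdots$ by letting $t_n$ be the first time after $u_{n-1}$ with $J=i$ and $\xi\le\xi_{u_{n-1}}-a_n$ (the d\'ebut of a closed set for the c\`adl\`ag process $(J,\xi)$), and $u_n$ the first hitting time of $j$ after $t_n$. These are finite on $E$: $u_n$ by recurrence of $J$, and $t_n$ because $\xi$ is bounded on compacts, whence $\inf_{t\in\TT_i,\,t>u_{n-1}}\xi_t=-\infty$ on $E$. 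By the strong Markov property at $t_n$ (at which $J_{t_n}=i$), the increments $G^{(n)}:=\xi_{u_n}-\xi_{t_n}$, extended by independent copies off $E$, form an i.i.d.\ sequence with the law $G$ of $\xi$ at the first hit of $j$ under $\PP_i$; and $\xi_{t_n}\le\xi_{u_{n-1}}-a_n$ telescopes to $\xi_{u_n}\le\sum_{k=1}^n(G^{(k)}-a_k)$ on $E$. Now pick the deterministic sequence $(a_n)$: since $(G^{(k)})$ is i.i.d.\ there is a deterministic $c_n\uparrow\infty$ with $\sum_{k\le n}G^{(k)}=o(c_n)$ a.s.\ (Borel--Cantelli on the i.i.d.\ positive parts), so $a_n:=2c_n$ gives $\sum_{k=1}^n(G^{(k)}-a_k)\to-\infty$ a.s., hence $\xi_{u_n}\to-\infty$ on $E$. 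As each $u_n\in\TT_j$, this forces $\inf_{t\in\TT_j}\xi_t=-\infty$ on $E$, so $\PP_\pi(\inf_{t\in\TT_j}\xi_t=-\infty)\ge\PP_\pi(E)>0$. Together with the per-state correspondence, this is the lemma.

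\emph{Main obstacle.} The substance is entirely in the solidarity step, and the key point is to argue with ``$\inf_{t\in\TT_j}\xi_t>-\infty$'' rather than with ``$\xi_t\to\infty$ along $\TT_j$'': only then can one take a single low value of $\xi$ on $\TT_i$ and force an arbitrarily low value on $\TT_j$ by stringing together finitely many \emph{independent} ``excursion-to-$j$'' increments $G^{(n)}$ --- this is where the independence (not mere uncorrelatedness) of the MAP components is used; a naive hope of controlling the within-excursion dips from the excursion endpoints fails, and the argument must go the other way. The remaining care is routine: choosing $(a_n)$ so the penalty dominates the i.i.d.\ increments (trivial when $\EE|G|<\infty$, needing the growing penalty above otherwise), and the bookkeeping for the d\'ebuts $t_n$ and for the splicing that makes $(G^{(n)})$ exactly i.i.d.
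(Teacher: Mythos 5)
Your argument is correct in its essentials but follows a genuinely different route from the paper. The paper's proof is short and leans on two external results: it passes from $\hat{\xi}^{j}_t\to\infty$ to the embedded Markov random walk $(\xi_{\tau_n(j)})_n$, invokes the discrete-time solidarity lemma of Alsmeyer and Buckmann to get $\xi_{\tau_n(i)}\to\infty$ for every $i$, and then uses an Erickson--Maller theorem on L\'evy processes sampled along subordinator times to upgrade the discrete skeleton back to the full conflated process. You instead reformulate drift to $+\infty$ as ``bounded below on $\TT_j$'' via the L\'evy trichotomy and prove the solidarity directly, by stringing together d\'ebut times at which $\xi$ is very low on $\TT_i$ with the subsequent first hits of $j$, so that the i.i.d.\ excursion increments $G^{(n)}$ (dominated by a deterministic penalty $a_n$ chosen via Borel--Cantelli) drag $\xi$ down to $-\infty$ along $\TT_j$. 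This is self-contained, avoids both citations, and the bookkeeping (measurability of the d\'ebuts, finiteness of $t_n$ on $E$, the splicing that makes $(G^{(n)})$ i.i.d., the $0$--$1$ law and the independence of $\PP_k(\inf_{t\in\TT_j}\xi_t>-\infty)$ from $k$) all checks out.

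There is one small gap: the per-state equivalence ``$\hat{\xi}^j$ drifts to $+\infty$ iff $\inf_{t\ge 0}\hat{\xi}^j_t>-\infty$'' fails when $\hat{\xi}^j\equiv 0$, which is not excluded by the paper's standing assumption that $\xi$ itself is non-degenerate (e.g.\ $\xi^{(j)}\equiv 0$ with deterministic, cancelling excursion displacements). So your chain $A(j_0)\Rightarrow C(j_0)\Rightarrow C(j)\ \forall j\Rightarrow A(j)\ \forall j$ breaks at the last arrow for any state $j$ with $\hat{\xi}^j\equiv 0$. The configuration ``$\hat{\xi}^{j_0}\to\infty$ but $\hat{\xi}^j\equiv 0$'' cannot actually occur, but your proof does not rule it out; a quick patch is available: if $\xi_{\tau_n(j)}=0$ for all $n$, then by the strong Markov property at the successive returns to $j$ the values of $\xi$ at the first subsequent hits of $j_0$ form an i.i.d.\ real sequence, which cannot tend to $+\infty$, contradicting $\hat{\xi}^{j_0}\to\infty$. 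With that half-paragraph added, your proof is complete.
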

	\begin{proof}
	Assume $\lim_{t\to\infty} \hat{\xi}^j_t= \infty$ $\PP_j$-a.s. for some $j\in\cS$, then clearly the subsequence $\xi_{\tau_{n}(j)}$ tends to $\infty$ $\PP_j$-a.s. as well for $n\to\infty$. By \cite[Lemma 7.1]{alsmeyerbuckmann2} it follows that $\lim_{n\to\infty} \xi_{\tau_n(j)}=\infty$ for all $j\in\cS$. 
	Now note that the sequence $(\xi_{\tau_n(j)})_{n\in\NN}$ takes exactly the same values as $(\hat{\xi}^j_{S_t})_{t\geq 0}$ where $(S_t)_{t\geq 0}$ is a subordinator with exponentially distributed jumps, namely $S_t=\sum_{i=1}^{M_t} (\tau_i^-(j)-\tau_{i-1}(j))$ for an arbitrary Poisson process $(M_t)_{t\geq 0}$. By \cite[Thm. 3.2(IV)]{ericksonmaller05b} this implies the claim. 
	\end{proof}
	
	We now prove the statement on almost sure convergence in Theorem \ref{thmnessandsuff}, that is we show:
	
	\begin{proposition}
	Assume $\lim_{t\in \TT_j, t\to \infty} \xi_t=\infty$ $\PP_j$-a.s. for some hence all $j\in \cS$. The exponential integral $\frace_{(\xi,\eta)}(t)$ converges $\PP_\pi$-a.s. as $t\to \infty$ to some random variable $\frace_{(\xi,\eta)}^\infty$ if and only if  \eqref{eq-nesssuffas} holds 
	for all $j\in \cS$.
	\end{proposition}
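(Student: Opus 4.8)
The plan is to discretise $\frace_{(\xi,\eta)}$ along the jump times of $(J_t)_{t\ge0}$ and reduce the problem to the Markov-modulated perpetuity criterion of Alsmeyer--Buckmann recalled in \eqref{eq_AB3}. Evaluating at $T_n$ gives, exactly as in \eqref{eq_deg01}, $\frace_{(\xi,\eta)}(T_n)=\sum_{i=1}^n\big(\prod_{k=1}^{i-1}A_k\big)B_i$ with $(A_n,B_n)=\big(e^{-(\xi_{T_n}-\xi_{T_{n-1}})},\int_{(T_{n-1},T_n]}e^{-(\xi_{s-}-\xi_{T_{n-1}})}d\eta_s\big)$, modulated by the jump chain $(M_n)_{n\in\NN_0}$ whose stationary law $\pi_M$ is equivalent to $\pi$ (so $\PP_\pi$- and $\PP_{\pi_M}$-a.s.\ statements agree). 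The non-degeneracy conditions \eqref{eq_AB2} hold automatically: $A_n=e^{-(\xi_{T_n}-\xi_{T_{n-1}})}>0$, and $\PP_{\pi_M}(B=0)=1$ would force, by strict positivity of the integrand, $\eta$ to have no variation on any inter-jump interval, i.e.\ $\eta\equiv0$, which is excluded. Hence \cite[Thm.~3.1]{alsmeyerbuckmann1} applies and $\frace_{(\xi,\eta)}(T_n)$ converges $\PP_\pi$-a.s.\ to a proper limit if and only if \eqref{eq_AB3} holds for every $j\in\cS$.

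Next I would translate \eqref{eq_AB3}. Writing $\rho_n(j)$ for the $n$-th visit \emph{index} of $(M_n)$ to $j$, so that $T_{\rho_n(j)}$ is the $n$-th return time $\tau_n(j)$ of $(J_t)$ to $j$ in the sense of \eqref{eq_MAP03}, one has $\prod_{k=1}^{\rho_n(j)}A_k=e^{-\xi_{\tau_n(j)}}$; thus the first clause of \eqref{eq_AB3} reads $\xi_{\tau_n(j)}\to\infty$ $\PP_\pi$-a.s., which by Lemmas \ref{lem-help1}--\ref{lem-help2} is precisely the standing hypothesis, so only the integral test remains. Moreover $-\log|\prod_{\ell=1}^{\rho_1(j)}A_\ell|=\xi_{\tau_1(j)}$, so the denominator in \eqref{eq_AB3} equals $\int_{(0,\log q)}\PP_j(\xi_{\tau_1(j)}>x)\,dx$, agreeing with the denominator in \eqref{eq-nesssuffas} up to a Lebesgue-null set, while the variable \eqref{eq_AB5} becomes $W_j=\max_{1\le k\le\rho_1(j)}\big|\frace_{(\xi,\eta)}(T_k)-\frace_{(\xi,\eta)}(T_{k-1})\big|$. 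Therefore \eqref{eq_AB3} is exactly condition \eqref{eq-nesssuffas} with the continuous-time running supremum $S_j:=\sup_{0<t\le\tau_1(j)}|\frace_{(\xi,\eta)}(t)|$ replaced by the discrete block-maximum $W_j$, and it remains to prove (i) that the two integral tests are equivalent, and (ii) that $\PP_\pi$-a.s.\ convergence of $\frace_{(\xi,\eta)}(T_n)$ upgrades to $\PP_\pi$-a.s.\ convergence of $\frace_{(\xi,\eta)}(t)$ as $t\to\infty$.

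For (ii) the forward implication is trivial, so assume \eqref{eq-nesssuffas}; by (i) the Alsmeyer--Buckmann test holds and $\frace_{(\xi,\eta)}(T_n)\to\frace_{(\xi,\eta)}^\infty$ $\PP_\pi$-a.s. Under $\PP_j$ the successive return cycles of $(J_t)$ are i.i.d., and for $t$ in the $n$-th cycle one has $\frace_{(\xi,\eta)}(t)-\frace_{(\xi,\eta)}(\tau_{n-1}(j))=e^{-\xi_{\tau_{n-1}(j)}}\,(\text{partial functional over cycle }n)$, so the within-cycle oscillation is at most $e^{-\xi_{\tau_{n-1}(j)}}S_j^{(n)}$ with $S_j^{(n)}$ an i.i.d.\ copy of $S_j$; since $(\xi_{\tau_n(j)})_n$ is a random walk with drift in $(0,\infty]$, condition \eqref{eq-nesssuffas} is exactly what makes $\sum_n\PP_j(e^{-\xi_{\tau_{n-1}(j)}}S_j^{(n)}>\veps)<\infty$, and Borel--Cantelli then yields $\frace_{(\xi,\eta)}(t)\to\frace_{(\xi,\eta)}^\infty$ $\PP_j$-a.s., hence $\PP_\pi$-a.s. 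The converse direction of the Proposition reads this chain backwards: $\frace_{(\xi,\eta)}(t)\to\frace_{(\xi,\eta)}^\infty$ $\PP_\pi$-a.s.\ forces $\frace_{(\xi,\eta)}(T_n)\to\frace_{(\xi,\eta)}^\infty$, hence the Alsmeyer--Buckmann test, hence \eqref{eq-nesssuffas} by (i). Throughout, the elementary fact that $D_j(a):=\int_0^a\PP_j(\xi_{\tau_1(j)}>x)\,dx$ is concave and nondecreasing (so $D_j(\log cq)\asymp D_j(\log q)$ and $\log cq\asymp\log q$ for fixed $c>0$) is used to keep all tests insensitive to bounded multiplicative perturbations.

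The \textbf{main obstacle} is step (i), and within it the implication ``test with $W_j$ $\Rightarrow$ test with $S_j$'' (the reverse being immediate from $W_j\le 2S_j$). For the hard direction one decomposes $S_j\le\max_{k\le\rho_1(j)}\big(|\frace_{(\xi,\eta)}(T_{k-1})|+e^{-\xi_{T_{k-1}}}G_k\big)$, where $G_k$ is the running supremum of an exponential functional of a L\'evy process over a single sojourn block (an independent exponential horizon); the delicate point is that inside such a block $\xi$ need not drift to $+\infty$, so $G_k$ can be far larger than the net block increment entering $W_j$, and one cannot dominate $S_j$ by a deterministic multiple of $W_j$. One must instead argue at the level of tails, in the integral-test sense, that the finite-horizon running suprema $G_k$ carry no heavier tail than $W_j$ already does: this is done blockwise using Erickson--Maller-type estimates for exponential functionals of L\'evy processes over an independent exponential time, then summed over the (stochastically geometric, hence light-tailed) number $\rho_1(j)$ of blocks in a cycle, using again the concavity of $D_j$ to absorb the resulting logarithmic factors. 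Combining (i), (ii) and \cite[Thm.~3.1]{alsmeyerbuckmann1} then gives the Proposition, with $\frace_{(\xi,\eta)}^\infty$ the $\PP_\pi$-a.s.\ limit.
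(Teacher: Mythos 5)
Your overall architecture (discretise, invoke a perpetuity criterion, control the within-cycle remainder) is close in spirit to the paper's, but you discretise at the jump times $T_n$ and invoke the Markov-modulated criterion \eqref{eq_AB3} of \cite{alsmeyerbuckmann1}, whereas the paper discretises at the return times $\tau_n(j)$ under $\PP_j$, so that the resulting perpetuity $I_1=\sum_{k}(\prod_{\ell<k}A_\ell)B_k$ with $(A_k,B_k)=(e^{-(\xi_{\tau_k(j)}-\xi_{\tau_{k-1}(j)})},\int_{(\tau_{k-1}(j),\tau_k(j)]}e^{-(\xi_{s-}-\xi_{\tau_{k-1}(j)})}d\eta_s)$ is i.i.d.\ and only the classical Goldie--Maller theorem \cite{goldiemaller00} is needed; the remainder $I_2$ after the last return is bounded by $e^{-\xi_{\tau_{N_t}(j)}}W_{N_t}$ with $W_n$ the running supremum over cycle $n$, and both sufficiency and necessity of \eqref{eq-nesssuffas} are read off from the behaviour of $\limsup_t \log^+ W_{N_t}/\xi_{\tau_{N_t}(j)}$ via \cite[Lemma 8.1]{alsmeyerbuckmann2} together with the conditional independence of $I_1$ and $I_2$ given $(J_t)_{t\geq 0}$. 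The crucial structural advantage of that choice is that the condition \eqref{eq-nesssuffas} is phrased from the outset in terms of the continuous-time running supremum over one $j$-cycle, so no comparison between two different integral tests is ever required.

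That comparison is exactly where your proposal has a genuine gap. Your ``only if'' direction runs: a.s.\ convergence $\Rightarrow$ convergence of $\frace_{(\xi,\eta)}(T_n)$ $\Rightarrow$ the test \eqref{eq_AB3} with $W_j=\max_{k\le\rho_1(j)}|\frace_{(\xi,\eta)}(T_k)-\frace_{(\xi,\eta)}(T_{k-1})|$ $\Rightarrow$ the test \eqref{eq-nesssuffas} with $S_j=\sup_{0<t\le\tau_1(j)}|\frace_{(\xi,\eta)}(t)|$. The last implication is precisely the ``hard direction'' of your step (i), and you only sketch it. The obstruction you yourself identify is real: the net increment of the functional over a sojourn block can be small while its running supremum inside the block is huge, so $S_j$ is not dominated by any function of $W_j$, and the claim must be an assertion about tails of the running supremum versus the endpoint of a L\'evy exponential integral over an independent exponential horizon. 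No such comparability result is quoted or proved, the appeal to ``Erickson--Maller-type estimates summed over the geometric number of blocks'' is not a proof, and it is not clear the route can be completed without essentially redoing the paper's direct argument. (Secondary, smaller issues: your verification of $\PP_{\pi_M}(B=0)<1$ ignores the possibility of exact cancellation between the within-block integral and the transition jump, which is a degenerate configuration that must be ruled out before \cite[Thm.\ 3.1]{alsmeyerbuckmann1} yields an ``if and only if''; and the claim that \eqref{eq-nesssuffas} ``is exactly what makes'' $\sum_n\PP_j(e^{-\xi_{\tau_{n-1}(j)}}S_j^{(n)}>\veps)<\infty$ needs the renewal estimate behind \cite[Lemma 8.1]{alsmeyerbuckmann2}, including the infinite-mean case handled via Kesten's trichotomy.) Your step (ii) and the easy direction $W_j\le 2S_j$ are fine and mirror the paper's treatment of $I_2$.
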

	
	\begin{proof}
		We start with the ``if'' statement. Fix any $j\in\cS$ and let $N_t=\sum_{n\in\NN} \mathds{1}_{\tau_n(j)\leq t}$ for $t\geq 0$. Then for any $t\geq 0$
		\begin{align*}
		\frace_{(\xi,\eta)}(t) &= \int_{(0,\tau_{N_t}(j)]} e^{-\xi_{s-}}d\eta_s + \int_{(\tau_{N_t}(j),t]} e^{-\xi_{s-}}d\eta_s \\
		&= \sum_{k=1}^{N_t} e^{\xi_{\tau_{k-1}(j)}}\int_{(\tau_{k-1}(j),\tau_k(j)]} e^{-(\xi_{s-} - \xi_{\tau_{k-1}(j)})}d\eta_s + e^{-\xi_{\tau_{N_t}(j)}} \int_{(\tau_{N_t}(j),t]} e^{-(\xi_{s-}-\xi_{\tau_{N_t}(j)})}d\eta_s\\
		&=:I_1+I_2.
		\end{align*}
		Here, $I_1$ is a classical perpetuity, namely $I_1=\sum_{k=1}^{N_t}( \prod_{\ell=1}^{k-1} A_\ell )B_k$ with
		$$(A_k,B_k)= (e^{-(\xi_{\tau_k(j)}-\xi_{\tau_{k-1}(j)})}, \int_{(\tau_{k-1}(j),\tau_k(j)]} e^{-(\xi_{s-}-\xi_{\tau_{k-1}(j)})} d\eta_s),$$
		such that $(A_k,B_k)_{k\in \NN}$ is an i.i.d. sequence under $\PP_j$.  
		Thus by \cite[Thm. 2.1]{goldiemaller00} $I_1$ converges a.s. since $\xi_{\tau_n(j)}\to \infty$ a.s. and 
		\begin{align*}\lefteqn{\int_{(1,\infty)} \frac{\log q}{\int_{(0,\log q]} \PP_j(\xi_{\tau_1(j)}>u) du } \PP_j\left( \Big|\int_{(0,\tau_1(j)]} e^{-\xi_{s-}} d\eta_s  \Big| \in dq \right)}\\
		&\leq \int_{(1,\infty)} \frac{\log q}{\int_{(0,\log q]} \PP_j(\xi_{\tau_1(j)}>u) du } \PP_j\left(\sup_{0<t\leq \tau_1(j)} \Big|\int_{(0,\tau_1(j)]} e^{-\xi_{s-}} d\eta_s  \Big| \in dq \right) <\infty \end{align*}
		by our assumptions.\\
		To find an appropriate bound for $I_2$ note first that
		$$|I_2| \leq e^{-\xi_{\tau_{N_t}(j)}} \sup_{\tau_{N_t}(j) < t \leq \tau_{N_t+1}(j)} \Big|\int_{(\tau_{N_t(j)},t]} e^{-(\xi_{s-}-\xi_{\tau_{N_t(j)}})} d\eta_s  \Big|=: e^{-\xi_{\tau_{N_t}(j)}} W_{N_t},$$
		where $(W_n)_{n\in\NN}$ is an i.i.d. sequence. 
		We will show that for some $c>0$
		$$\lim_{t\to\infty} e^{c\tau_{N_t(j)}} e^{-\xi_{\tau_{N_t}(j)}} W_{N_t}=0 \quad \PP_j\text{-a.s.},$$
		which is equivalent to state that
		\begin{equation}\label{eq-almostsure1}  \lim_{t\to\infty} (\xi_{\tau_{N_t}(j)} - c\tau_{N_t(j)} - \log^+ W_{N_t})=\lim_{t\to\infty} \xi_{\tau_{N_t}(j)}\big( 1- c \frac{\tau_{N_t(j)}}{\xi_{\tau_{N_t}(j)}} - \frac{\log^+ W_{N_t}}{\xi_{\tau_{N_t}(j)}} \big)= \infty\quad \PP_j\text{-a.s.}\end{equation} 
		Since $\lim_{t\in\TT_j,t\to\infty}\xi_t=\infty$ and since by Lemma \ref{lem-help1} the conflated process $(\hat{\xi}_t^j)_{t\geq 0}$ is a L\'evy process under $\PP_j$, it holds that $0<\EE_j[\hat{\xi}_1^j]\leq \infty$. This clearly implies 
		$0<\EE_j[\xi_{\tau_1(j)}]\leq \infty$ and we fix $c=\frac12 \EE_j[\xi_{\tau_1(j)}])$ whenever the appearing expectation is finite, and set $c=1$ otherwise.
		This then yields (in case of infinite expectation using Kesten's trichotomy \cite{Kesten}; also see \cite[Proof of Thm. 3.1]{alsmeyerbuckmann2})
		$$\limsup_{t\to\infty} \frac{\tau_{N_t(j)}}{\xi_{\tau_{N_t(j)}}} = \limsup_{t\to\infty}\frac{N_t(j)}{\xi_{\tau_{N_t(j)}}} \frac{\tau_{N_t(j)}}{N_t(j)} \leq \frac{1}{2c}\EE[\tau_1(j)]<\infty \quad \PP_j\text{-a.s.}$$
		Further, whenever $\EE_j[\log^+ W_1]+ \EE_j[\xi_{\tau_1(j)}]=\infty$, by \cite[Lemma 8.1]{alsmeyerbuckmann2} Equation \eqref{eq-nesssuffas} implies directly that
		\begin{equation}\label{eq-almostsure2}
		\limsup_{t\to \infty} \frac{\log^+ W_{N_t}}{\xi_{\tau_{N_t}(j)}} =0 \quad \PP_j\text{-a.s.}
		\end{equation} 
		On the other hand, if $\EE_j[\log^+ W_1]+ \EE_j[\xi_{\tau_1(j)}]<\infty$, then 
		$$0= \limsup_{t\to\infty} \frac{W_{N_t}}{t} \geq \limsup_{t\to\infty} \frac{\log^+ W_{N_t}}{t} =\limsup_{t\to\infty} \frac{\log^+ W_{N_t}}{\xi_{\tau_{N_t}(j)}} \frac{\xi_{\tau_{N_t}(j)}}{t} \quad \PP_j\text{-a.s.}$$
		which again implies \eqref{eq-almostsure2} since $\lim_{t\to\infty} \xi_{\tau_{N_t(j)}}/t >0$. 
		Hence \eqref{eq-almostsure1} follows and the growth of $|I_2|$ is bounded by $e^{-ct}$ which proves the almost sure convergence under $\PP_j$. As $j\in\cS$ was arbitrary this implies $\PP_\pi$-a.s. convergence.
					
		For the ``and only if'' statement, assume \eqref{eq-nesssuffas} fails. Then $\EE_j[\log^+ W_1]=\infty$ as well, since $\EE_j[\xi_{\tau_1(j)}^+ \wedge \log q]$ is bounded away from zero by assumption. Thus by \cite[Lemma 8.1]{alsmeyerbuckmann2} it follows that
		\begin{equation}\label{eq-almostsure3}
		\limsup_{t\to \infty} \frac{\log^+ W_{N_t}}{\xi_{\tau_{N_t}(j)}} =\infty \quad \PP_j\text{-a.s.}
		\end{equation}
		such that
		\begin{align*}
		\limsup_{t\to\infty} \log^+(e^{-\xi_{\tau_{N_t}(j)}} W_{N_t}) 
			&= \limsup_{t\to\infty} (-\xi_{\tau_{N_t}(j)} + \log^+ W_{N_t}) \\
			&=  \limsup_{t\to\infty} \xi_{\tau_{N_t}(j)}\big(-1 + \frac{\log^+ W_{N_t}}{\xi_{\tau_{N_t}(j)}}\big)\\
			&=\infty,
		\end{align*}
		and $|I_2|$ is not converging. Thus, by  conditional independence  given $(J_t)_{t\geq 0}$ of $I_1$ and $I_2$, also $\frace_{(\xi,\eta)}(t)$ does not converge $\PP_j$-a.s. as we had to show.
	\end{proof}

	\subsection{Proof of Theorem \ref{thmnessandsuff}: Convergence in probability}\label{s43}
	
	In this section we prove:	
	
	\begin{proposition}\label{prop-weak}
		If $\lim_{t\in \TT_j, t\to \infty} \xi_t=\infty$ $\PP_j$-a.s. and \eqref{eq-nesssuffweak}
		  hold for some $j\in \cS$, then for all $j\in \cS$ 
		  $\frace_{(\xi,\eta)}(t)\to \frace_{(\xi,\eta)}^\infty$ in $\PP_j$-probability.
	\end{proposition}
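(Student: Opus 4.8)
The strategy is to reduce the continuous-time statement to the discrete-time convergence-in-probability result \eqref{eq_AB7} of Alsmeyer and Buckmann, applied to a suitably conflated version of the exponential integral. Fix a state $j\in\cS$ for which the hypotheses hold. First I would pass from $(\xi_t,\eta_t,J_t)_{t\ge 0}$ to the \emph{conflated process} $(\hat\xi^j_t,\hat\eta^j_t,\hat J^j_t)_{t\ge 0}$ obtained as in Lemma \ref{lem-help1} by collapsing the excursions of $J$ away from $j$ to single jumps, so that under $\PP_j$ the $n$-th excursion away from $j$ contributes one jump occurring at the conflated time $\tau^-_n(j)=\tau_n(j)$; by Lemma \ref{lem-help1} the conflated $\hat\xi^j$ is then a genuine L\'evy process with the stated triplet, and the associated \emph{conflated exponential integral} $\hat\frace^j_{(\xi,\eta)}(t)$ is the exponential integral of a bivariate L\'evy process in the sense of Section \ref{S1a}. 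The key bookkeeping point is that the jump of $\hat\eta^j$ at a conflation point carries the value of the whole excursion integral $\int_{[\tau_1^-(j),\tau_1(j)]} e^{-(\xi_{s-}-\xi_{\tau_1^-(j)})}d\eta_s$, discounted appropriately, which is exactly why the modified characteristics $A^j_\xi$ and $\bar\nu^j_\eta$ in the statement of Theorem \ref{thmnessandsuff} are built from $A_{\xi^{(j)}}$ and $\nu_{\eta^{(j)}}$ by subtracting, at rate $-q_{j,j}$, the law of the discounted excursion increments. Thus I would verify that $A^j_\xi$ and $\bar\nu^j_\eta$ are precisely the L\'evy characteristics entering the Erickson--Maller integrability condition \eqref{eq_EM1} for $\hat\frace^j_{(\xi,\eta)}$.

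With that identification in place, hypothesis $\lim_{t\in\TT_j,t\to\infty}\xi_t=\infty$ $\PP_j$-a.s.\ translates via Lemma \ref{lem-help2} into $\lim_{t\to\infty}\hat\xi^j_t=\infty$ $\PP_j$-a.s., and condition \eqref{eq-nesssuffweak} is exactly $I_{\hat\xi^j,\hat\eta^j}<\infty$ in the notation of \eqref{eq_EM1}. Hence by \cite[Thm.~2]{ericksonmaller05} the conflated integral $\hat\frace^j_{(\xi,\eta)}(t)$ converges $\PP_j$-a.s.\ as $t\to\infty$ to a finite limit $\frace^\infty_{(\xi,\eta)}$. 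Discretizing $\hat\frace^j_{(\xi,\eta)}$ along the return times $\tau_n(j)$ exhibits it as a Markov-modulated perpetuity $Z_n=\sum_{i=1}^n(\prod_{k=1}^{i-1}A_k)B_i$ in the sense of \eqref{eq_AB1} with $(A_k,B_k)$ as in \eqref{eq_deg01}; a.s.\ convergence of the perpetuity along $\tau_n(j)$ gives in particular $Z_{\tau_1(j)}$ integrable in the weighted sense, i.e.\ condition \eqref{eq_AB7} is met. Then \cite[Thm.~3.4]{alsmeyerbuckmann1} upgrades convergence from the single state $j$ to $\PP_i$-convergence in probability of $Z_n$ for \emph{all} $i\in\cS$, and hence of the conflated integral $\hat\frace^i_{(\xi,\eta)}(t)$ for all $i$.

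Finally I would remove the conflation. The original integral and its conflated version agree at exit times: $\frace_{(\xi,\eta)}(\tau^-_n(j))=\hat\frace^j_{(\xi,\eta)}(\tau^-_n(j))$ up to the (conditionally independent, a.s.\ finite) excursion contributions, and the difference between $\frace_{(\xi,\eta)}(t)$ and the conflated integral evaluated at the last preceding return/exit time is, as in the $I_2$-estimate in the proof of the a.s.\ part, bounded by $e^{-\xi_{\tau_{N_t}(j)}}$ times an i.i.d.\ sequence of excursion suprema; since $\xi_{\tau_n(j)}\to\infty$ $\PP_i$-a.s.\ for every $i$ (again Lemma \ref{lem-help2} together with \cite[Lemma~7.1]{alsmeyerbuckmann2}), this remainder tends to $0$ in $\PP_i$-probability. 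Combining, $\frace_{(\xi,\eta)}(t)\to\frace^\infty_{(\xi,\eta)}$ in $\PP_i$-probability for all $i\in\cS$. I expect the main obstacle to be the first step: carefully checking that the discounted-excursion jump law is exactly the correction appearing in $A^j_\xi$ and $\bar\nu^j_\eta$, so that the Erickson--Maller condition for the conflated L\'evy process matches \eqref{eq-nesssuffweak} verbatim, and ensuring the remainder estimate is genuinely uniform enough to give convergence in probability rather than merely along the subsequence $(\tau_n(j))$.
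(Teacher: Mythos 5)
Your plan follows essentially the same route as the paper's proof: conflate the excursions away from $j$ into single jumps, identify the conflated integral $\hat{\frace}^j_{(\xi,\eta)}$ as the exponential integral of a bivariate L\'evy process whose characteristics are exactly $A^j_\xi$ and $\bar{\nu}^j_\eta$ (so that \eqref{eq-nesssuffweak} is the Erickson--Maller condition \eqref{eq_EM1} for it), conclude $\PP_j$-a.s.\ convergence of the conflated integral, and then control the remainder when $t$ falls inside an excursion. Two remarks. First, the step you leave open is precisely the nontrivial one, and it is not as innocent as ``$e^{-\xi_{\tau_{N_t}(j)}}$ times an i.i.d.\ sequence of excursion suprema tends to $0$'': the excursion straddling a fixed time $t$ is length-biased (inspection paradox), so the supremum of the current excursion integral is \emph{not} distributed like its i.i.d.\ prototype $\tilde M_1$, and a union bound over indices would demand exactly the summability that fails in the ``in probability but not a.s.''\ regime. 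The paper resolves this by writing $\PP_j(M_t\le x)$ as a renewal convolution and invoking the key renewal theorem to show that $M_t$ converges in distribution, hence is tight, after which Slutzky's theorem finishes the argument; some device of this kind is indispensable and should be supplied. Second, you transfer from $\PP_j$ to arbitrary initial states by passing through the discrete perpetuity and \cite[Thm.~3.4]{alsmeyerbuckmann1}; the paper instead argues directly from positive recurrence (up to $\tau_1(j)$ the integral cannot diverge, since that would force divergence under $\PP_j$, and after $\tau_1(j)$ the established $\PP_j$-convergence applies). Both routes are viable, but yours re-creates at every state the interpolation problem between jump times that you already have to solve once for $j$, so the paper's shortcut is worth adopting.
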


	\begin{proof}
		Fix $j\in \cS$ such that $\lim_{t\in \TT_j, t\to \infty} \xi_t=\infty$ and \eqref{eq-nesssuffweak} hold. Under $\PP_j$ we split up the exponential integral as follows:
		\begin{align}
		\frace_{(\xi,\eta)}(t)&= \int_{(0,t]} e^{-\xi_{s-}} \mathds{1}_{s\in \TT_j} d\eta_s  +\int_{(0,t]} e^{-\xi_{s-}} \mathds{1}_{s\not\in \TT_j} d\eta_s  \nonumber \\
		&= \sum_{k=1}^{N_t^-} \int_{(\tau_{k-1}(j),\tau_k^-(j))} e^{-\xi_{s-}} d\eta_s +  \sum_{k=1}^{N_t} \int_{[\tau_k^-(j),\tau_k(j)]} e^{-\xi_{s-}} d\eta_s + \begin{cases}
		\int_{(\tau_{N_t}(j),t]}e^{-\xi_{s-}} d\eta_s, & t\in \TT_j,\\
		\int_{(\tau^-_{N_t+1}(j),t]}e^{-\xi_{s-}} d\eta_s, & t\not\in \TT_j,
		\end{cases} \label{eq-weakperturb}
		\end{align}
		where $N_t:=N_t(j):=\sum_{n\in\NN} \mathds{1}_{\tau_n(j)\leq t}$ and $N_t^-:=N^-_t(j):=\sum_{n\in\NN} \mathds{1}_{\tau^-_n(j)\leq t}$ count the returns to and exits from $j$ up to time $t$, respectively.\\
		Define 
		$$\tilde{F}_k:=\int_{[\tau_k^-(j),\tau_k(j)]} e^{-(\xi_{s-}-\xi_{\tau_k^-(j)})} d\eta_s = \int_{(\tau_k^-(j),\tau_k(j)]} e^{-(\xi_{s-}-\xi_{\tau_k^-(j)})} d\eta_s + \Delta \eta_{\tau_k^-(j)},\quad k\in\NN,$$
		and 
		$$F_t:=\sum_{k=1}^{N_t^-} \tilde{F}_k, \quad t\geq 0,$$
		then clearly $(\tilde{F}_k)_{k\in\NN}$ forms an i.i.d. sequence and the conflated  process $(\hat{F}_t)_{t\geq 0}$ (in the same sense as in Lemma \ref{lem-help1}) is a compound Poisson process since the sojourn times $\tau_k^-(j)-\tau_{k-1}(j)$, i.e. the interarrival times of $(\hat{F}_t)_{t\geq 0}$, are exponentially distributed. Further
		$$\sum_{k=1}^{N_t} \int_{[\tau_k^-(j),\tau_k(j)]} e^{-\xi_{s-}} d\eta_s = \sum_{k=1}^{N_t} e^{-\xi_{\tau_k^-(j)}} \tilde{F}_k =\int_{(0,\tau_{N_t}(j)]} e^{-\xi_{s-}}dF_s = \int_{(0,t]} e^{-\xi_{s-}}dF_s.$$
		Thus for any $t\in \TT_j$ 
		\begin{align*}
		\frace_{(\xi,\eta)}(t) 
		&= \int_{(0,t]} e^{-\xi_{s-}} \mathds{1}_{s\in \bigcup_{k\in\NN} (\tau_{k-1}(j),\tau_k^-(j))} d\eta_s  +  \int_{(0,t]} e^{-\xi_{s-}}d\hat{F}_s,
		\end{align*}
		and the conflated version $(\hat{\frace}^j_{(\xi,\eta)}(t) )$ of this process (which is constant on $\TT_j^c$ anyway) is an exponential integral of the bivariate L\'evy process $(\hat{\xi}_t^{j}, \hat{\hat{\eta}}_t^{j}+ \hat{F}_t)_{t\geq 0}$, where $\hat{\hat{\eta}}$ is a variant of $\hat{\eta}$ that has no jumps at conflation times. Since $\hat{\xi}_t^j \to \infty$ a.s. by assumption, we see from \cite[Thm. 2]{ericksonmaller05} (also see Equation \eqref{eq_EM1}) that $(\hat{\frace}^j_{(\xi,\eta)}(t) )$ converges almost surely under $\PP_j$ if and only if 
		\begin{equation} \label{eq-weakEMcondition}
		\int_{(1,\infty)} \left(\frac{\log y}{A_{\hat{\xi}^j}(\log y)} \right) |\bar{\nu}_{\hat{\hat{\eta}}^j+\hat{F}}(dy)|<\infty\end{equation}
		which is equivalent to \eqref{eq-nesssuffweak} by Lemma \ref{lem-help1}. \\
		It remains to show that for $t\notin \TT_j$ the appearing perturbation term $\int_{(\tau^-_{N_t+1}(j),t]}e^{-\xi_{s-}} d\eta_s$ in \eqref{eq-weakperturb} is bounded appropriately. To see this, set
		$$M_t:=\tilde{M}_{N_t}:=\sup_{s\in (\tau^-_{N_t}(j),\tau_{N_t}(j)]} \big|\int_{(\tau^-_{N_t}(j), s]} e^{-(\xi_{u-} - \xi_{\tau^-_{N_t}(j)})} \big| d\eta_u $$
		such that 
		\begin{align*}
		\PP_j(M_t\leq x)&= \PP_j(\tilde{M}_{N_t}\leq x) = \sum_{k\in\NN_0} \PP_j(\tilde{M}_{k}\leq x, N_t=k)\\
		&= \sum_{k\in\NN_0} \PP_j(\tilde{M}_{k}\leq x, \tau_k(j)\leq t, \tau_{k+1}(j)>t)\\
		&= \sum_{k\in\NN_0} \int_{(0,t]} \PP_j(\tilde{M}_{k}\leq x, \tau_k(j)-\tau_k^-(j) \leq t-s, \tau_{k+1}(j)-\tau_k^-(j) >t-s) \PP_j(\tau_k^-(j)\in ds)\\
		&=  \int_{(0,t]} \PP_j(\tilde{M}_{1}\leq x, \tau_1(j)-\tau_1^-(j) \leq t-s, \tau_{2}(j)-\tau_1^-(j) >t-s) \Big(\sum_{k\in\NN_0} \PP_j(\tau_k^-(j)\in ds) \Big).
			\end{align*}
		This convolution integral has a distributional limit by the key renewal theorem \cite[Thm. V.4.3]{asmussen}, i.e.
		\begin{align*}
		\PP_j(M_t\leq x)&\overset{d}{\underset{t\to \infty}\longrightarrow} \frac{1}{\EE_j[\tau_1^-(j)]} \int_0^\infty \PP_j(\tilde{M}_{1}\leq x, \tau_1(j)-\tau_1^-(j) \leq s, \tau_{2}(j)-\tau_1^-(j) > s) ds.
 		\end{align*}
 		Hence
		\begin{align*}
		\frace_{(\xi,\eta)}(t) &= \mathds{1}_{t\in \TT_j} \int_{(0,t]} e^{-\xi_{s-}} d\eta_s\\
		&\quad +		
		\mathds{1}_{t\not\in \TT_j} \Big(\underbrace{\int_{(0,\tau^-_{N_t+1}]} e^{-\xi_{s-}} d\eta_s}_{\text{ converges } \PP_j\text{-a.s. as } \tau^-_{N_t+1}\in \bar{\TT}_j} +  \underbrace{e^{- \xi_{\tau^-_{N_t+1}(j)}} }_{\to 0 \,\PP_j\text{-a.s.}}\underbrace{\int_{(\tau^-_{N_t+1}(j), t]} e^{-(\xi_{s-} - \xi_{\tau^-_{N_t+1}(j)})}d\eta_s}_{\sup |\cdot | \,\text{converges in distribution}}\Big)
		\end{align*}
		converges in $\PP_j$-probability as $t\to \infty$ by Slutzky's theorem as claimed.\\
		Finally note that convergence under $\PP_{j'}$ follows due to the positive recurrency of $(J_t)$: After reaching state $j$ the exponential integral converges in probability as shown, while up to $\tau_1(j)$ it cannot diverge as this would imply divergence under $\PP_j$ by the same argument.
	\end{proof}
	
	\subsection{Proof of Theorem \ref{thmnessandsuff}: Divergence}\label{s44}
	
	We will prove divergence of the exponential integral $\frace_{(\xi,\eta)}$ in the two possible cases separately and start with:
	
	\begin{proposition}
		Assume that the degeneracy condition \eqref{eq_deg00} fails and $\liminf_{t\in \TT_j, t\to \infty}\xi_t<\infty$ for some $j\in \cS$, then 
		$$|\frace_{(\xi,\eta)}(t)|\overset{\PP_\pi}\longrightarrow \infty.$$	
	\end{proposition}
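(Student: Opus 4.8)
The plan is to follow the route used in the proof of Proposition~\ref{prop-weak}: pass to the \emph{conflated} process and identify the associated object as an exponential integral of a bivariate L\'evy process, to which the divergence part of \cite[Thm.~2]{ericksonmaller05} (that is, \eqref{eq_EM2}--\eqref{eq_EM3}) applies. Fix $j\in\cS$. By the hypothesis $\liminf_{t\in\TT_j,\,t\to\infty}\xi_t<\infty$ $\PP_j$-a.s.\ and the contrapositive of Lemma~\ref{lem-help2}, the conflated process $\hat\xi^j$ of Lemma~\ref{lem-help1} does not drift to $+\infty$; under $\PP_j$ it drifts to $-\infty$ or oscillates, and in particular $e^{-\xi_{\tau_n(j)}}\not\to0$ $\PP_j$-a.s. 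As in the proof of Proposition~\ref{prop-weak}, the conflated exponential integral $\hat{\frace}^j_{(\xi,\eta)}$ is the exponential integral of the bivariate L\'evy process $(\hat\xi^j_t,\hat{\hat{\eta}}^j_t+\hat F_t)_{t\ge0}$; since its first coordinate does not drift to $+\infty$, \cite[Thm.~2]{ericksonmaller05} leaves exactly two possibilities: either $|\hat{\frace}^j_{(\xi,\eta)}(t)|\overset{\PP_j}{\longrightarrow}\infty$, or there is $k_j\in\RR\setminus\{0\}$ with $\hat{\frace}^j_{(\xi,\eta)}(t)=k_j(1-e^{-\hat\xi^j_t})$ for all $t\ge0$, $\PP_j$-a.s.

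I would then rule out the second alternative using that \eqref{eq_deg00} fails. Evaluating $\hat{\frace}^j_{(\xi,\eta)}(t)=k_j(1-e^{-\hat\xi^j_t})$ at the conflated time corresponding to a return time $\tau_n(j)$ — at which $\hat\xi^j$ equals $\xi_{\tau_n(j)}$ and $\hat{\frace}^j_{(\xi,\eta)}$ equals $\frace_{(\xi,\eta)}(\tau_n(j))$ — gives, for $n=1$, $\PP_j(A^1_jk_j+B^1_j=k_j)=1$ with $A^1_j,B^1_j$ as in \eqref{eq_deg02} (if $\cS=\{j\}$ this is the pure L\'evy case, where \eqref{eq_EM3} is \eqref{eq_deg00} directly). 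Hence \eqref{eq_deg02} holds for this $j$, and therefore, by \cite[Eq.~(17), Lemma~4.1]{alsmeyerbuckmann1}, for all $j\in\cS$ with a common sequence; then \eqref{eq_deg03} holds by \cite[Prop.~4.6]{alsmeyerbuckmann1}, and then \eqref{eq_deg00} for all $t\ge0$ by Proposition~\ref{prop_degchar} — contradicting our standing assumption. So the first alternative holds: $|\hat{\frace}^j_{(\xi,\eta)}(t)|\overset{\PP_j}{\longrightarrow}\infty$.

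It remains to transfer this to $|\frace_{(\xi,\eta)}(t)|\overset{\PP_\pi}{\longrightarrow}\infty$, and this is the step I expect to be the main obstacle. For $t\in\TT_j$ one has $\frace_{(\xi,\eta)}(t)=\hat{\frace}^j_{(\xi,\eta)}(s)$ with $s$ the conflated time of $t$, which tends to $\infty$, so divergence follows; for $t\notin\TT_j$ the decomposition \eqref{eq-weakperturb} reads $\frace_{(\xi,\eta)}(t)=\frace_{(\xi,\eta)}(\tau^-_{N_t+1}(j))+e^{-\xi_{\tau^-_{N_t+1}(j)}}\int_{(\tau^-_{N_t+1}(j),t]}e^{-(\xi_{s-}-\xi_{\tau^-_{N_t+1}(j)})}\,d\eta_s$, whose first summand is again a value of $\hat{\frace}^j_{(\xi,\eta)}$ at a conflated time tending to $\infty$. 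Exactly as in the proof of Proposition~\ref{prop-weak}, the supremum over the current excursion of the remaining integral converges in distribution by the key renewal theorem, hence is tight. The difficulty — and the point where this proof genuinely departs from that of Proposition~\ref{prop-weak} — is that here the discount factor $e^{-\xi_{\tau^-_{N_t+1}(j)}}$ need not be tight (it oscillates when $\hat\xi^j$ does), whereas there it tends to $0$ because $\xi_t\to\infty$ on $\TT_j$. I would handle this by distinguishing the event $\{e^{-\xi_{\tau^-_{N_t+1}(j)}}\le L\}$, on which the perturbation is tight and an argument as for Slutsky's theorem (if $X_t\overset{\PP}{\longrightarrow}\infty$ and $Y_t$ is tight then $X_t+Y_t\overset{\PP}{\longrightarrow}\infty$) applies, from its complement, on which the contribution to $\frace_{(\xi,\eta)}$ accumulated near $\tau^-_{N_t+1}(j)$ is itself forced to be large — the only obstruction to this being a degeneracy of the form \eqref{eq_deg00}, which is excluded — and then letting $L\to\infty$. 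Finally, convergence under $\PP_{j'}$ for every $j'\in\cS$, hence under $\PP_\pi$, follows from positive recurrence of $(J_t)_{t\ge0}$ as at the end of the proof of Proposition~\ref{prop-weak}. Everything before this last continuous-time step is bookkeeping on top of the already-quoted results of Alsmeyer--Buckmann and Erickson--Maller.
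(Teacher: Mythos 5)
Your route is genuinely different from the paper's. The paper does not conflate at all here: it uses \eqref{eq_deg07} to extract a fixed lag $u>0$ at which \eqref{eq-diverge01} holds, views the skeleton $Z_n^u=\frace_{(\xi,\eta)}(nu)$ as a Markov-modulated perpetuity over the skeleton chain $(J^u_n)_{n\in\NN_0}$, applies \cite[Thm.~3.4]{alsmeyerbuckmann1} directly (the hypothesis $\liminf_{t\in\TT_j,t\to\infty}\xi_t<\infty$ gives $\prod_\ell A^u_\ell\not\to0$ along return times, and \eqref{eq-diverge01} kills the degeneracy alternative \eqref{eq_AB9}), and then interpolates by writing $\frace_{(\xi,\eta)}(t)=\int_{(0,r_t]}e^{-\xi_{s-}}d\eta_s+e^{-\xi_{r_t}}(Z^u_{n_t})'$ with $r_t\in[0,u)$. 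Placing the remainder at the \emph{origin} is the whole point: the discount factor $e^{-\xi_{r_t}}$ is then indexed by a bounded time window and is tight away from $0$, so divergence of $Z^u_{n_t}$ passes through. The first two thirds of your argument (conflation, the Erickson--Maller dichotomy \eqref{eq_EM2}--\eqref{eq_EM3} for $(\hat\xi^j,\hat{\hat\eta}^j+\hat F)$, Lemma \ref{lem-help2} to rule out drift to $+\infty$, and the elimination of the degenerate alternative via \eqref{eq_deg02}, \cite[Lemma 4.1, Prop.~4.6]{alsmeyerbuckmann1} and Proposition \ref{prop_degchar}) are sound and would give $|\hat{\frace}^j_{(\xi,\eta)}(t)|\overset{\PP_j}{\to}\infty$.

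The gap is exactly where you say it is, and your sketch does not close it. On the event $\{e^{-\xi_{\tau^-_{N_t+1}(j)}}>L\}$ you assert that ``the contribution accumulated near $\tau^-_{N_t+1}(j)$ is itself forced to be large, the only obstruction being a degeneracy of the form \eqref{eq_deg00}.'' This is not true as stated: for $t$ shortly after $\tau^-_{N_t+1}(j)$ the excursion integral $\int_{(\tau^-_{N_t+1}(j),t]}e^{-(\xi_{s-}-\xi_{\tau^-_{N_t+1}(j)})}d\eta_s$ can be arbitrarily close to $0$ (e.g.\ if $\eta$ has no jump at the exit time), so the product with the large factor $e^{-\xi_{\tau^-_{N_t+1}(j)}}$ carries no lower bound, and $\frace_{(\xi,\eta)}(t)$ is then essentially $\frace_{(\xi,\eta)}(\tau^-_{N_t+1}(j)-)$ --- a left limit of $\hat{\frace}^j_{(\xi,\eta)}$ evaluated at a \emph{random} time, for which divergence in probability along deterministic times gives nothing directly. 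Failure of \eqref{eq_deg00} is a statement about a fixed deterministic time $t'$ (that is the content of \eqref{eq_deg07}) and cannot be invoked pointwise ``near $\tau^-_{N_t+1}(j)$'' to force largeness; turning it into a quantitative lower bound uniformly over the event $\{e^{-\xi_{\tau^-_{N_t+1}(j)}}>L\}$ is essentially the original problem. To repair the proof you would either have to carry out this uniform non-degeneracy estimate in detail, or switch to the paper's fixed-step discretization, where the uncontrolled discount factor never appears.
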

	\begin{proof}
		As seen in \eqref{eq_deg07} in the proof of Proposition \ref{prop_degchar}, whenever \eqref{eq_deg00} fails, we find $u>0$ such that for any sequence $\{c_i,i\in\cS\}$ 
		\begin{equation}\label{eq-diverge01}\PP_\pi\left(\int_{(0,u]} e^{-\xi_{s-}} d\eta_s = c_{J_0}-c_{J_u} e^{-\xi_u} \right)<1.\end{equation}
		Now consider
		\begin{align*}
		Z_n^u&:= \frace_{(\xi,\eta)}(nu)
		= \int_{(0,nu]} e^{-\xi_{s-}} d\eta_s
		=:\sum_{k=1}^n \left(\prod_{\ell=1}^{k-1} A^u_\ell\right) B^u_k,
		\end{align*}
		with
		$$(A^u_k,B^u_k)=( e^{-(\xi_{ku}-\xi_{(k-1)u})}, \int_{((k-1)u, ku]} e^{-(\xi_{s-}-\xi_{(k-1)u})}d\eta_s).$$
		Here $(A^u_k,B^u_k)_{k\in\NN}$ is a sequence of random vectors that is modulated by an ergodic Markov chain $(J^u_n)_{n\in \NN_0}$ which is a skeleton chain of $(J_t)_{t\geq 0}$.  Denoting the $k$-th return time of $(J^u_n)_{n\in\NN}$ to $j$ as
		$\tau_k^u(j)$, we note that $\prod_{\ell=1}^{\tau_k^u(j)} A_\ell^u = \exp(-\xi_{\tau_k^u(j) u})$ does not tend to $0$ a.s. for $k\to\infty$ due to our assumption. Together with \eqref{eq-diverge01} we thus conclude from \cite[Thm. 3.4]{alsmeyerbuckmann1} that $|Z_n^u|\overset{\PP_{\pi^u}}\longrightarrow \infty$, $n\to \infty$, where the invariant distribution $\pi^u$ of $(J^u_n)_{n\in \NN_0}$ is equivalent to $\pi$. \\
		Further, with $n_t:=\sup\{n\in \NN: nu\leq t\}$ and $r_t=t-n_t u \in[0,u)$, under $\PP_\pi$
		\begin{align*}
		\frace_{(\xi,\eta)}(t) &= \int_{(0,r_t]} e^{-\xi_{s-}}d\eta_s  + e^{-\xi_{r_t}}\int_{(r_t,t]} e^{-(\xi_{s-} - \xi_{r_t})}d\eta_s
		\overset{d}= \int_{(0,r_t]} e^{-\xi_{s-}}d\eta_s + e^{-\xi_{r_t}} (Z_{n_t}^u)',
		\end{align*}
	where $(Z_{n_t}^u)'$ is a copy of $Z_{n_t}^u$ that is independent of the past up to time $r_t$. Since $|Z_{n_t}^u|\overset{\PP_{\pi}}\longrightarrow \infty$, $t\to\infty$, while $e^{-\xi_{r_t}}$ is bounded away from zero and $\int_{(0,r_t]} e^{-\xi_{s-}}d\eta_s$ is finite, we observe that $|\frace_{(\xi,\eta)}(t)|\overset{\PP_\pi}\longrightarrow \infty$ as stated.	
	\end{proof}
	
	To complete the proof of Theorem \ref{thmnessandsuff} it remains to show:
	
	\begin{proposition}
		Assume that both the degeneracy condition \eqref{eq_deg00} and \eqref{eq-nesssuffweak} fail for all $j$, then 
		$$|\frace_{(\xi,\eta)}(t)|\overset{\PP_\pi}\longrightarrow \infty.$$
	\end{proposition}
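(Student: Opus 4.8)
The plan is as follows. If $\liminf_{t\in\TT_j,\,t\to\infty}\xi_t<\infty$ $\PP_j$-a.s.\ for some $j\in\cS$, the preceding proposition already gives the claim, so by Lemma~\ref{lem-help2} we may and do assume $\lim_{t\in\TT_j,\,t\to\infty}\xi_t=\infty$ $\PP_j$-a.s.\ for all $j$, equivalently $\hat\xi^j_t\to\infty$ $\PP_j$-a.s.\ for all $j$. Fixing $j\in\cS$ I would then run the conflation of Proposition~\ref{prop-weak}: the conflated exponential integral $\hat{\frace}^j_{(\xi,\eta)}$ is the exponential integral of the bivariate L\'evy process $(\hat\xi^j_t,\hat{\hat{\eta}}^j_t+\hat F_t)_{t\geq0}$ whose first component drifts to $+\infty$. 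Since \eqref{eq-nesssuffweak} fails for $j$, Lemma~\ref{lem-help1} shows that the Erickson--Maller integral \eqref{eq-weakEMcondition} of this L\'evy process is infinite; as its first component drifts to $+\infty$ the degenerate alternative \eqref{eq_EM3} cannot occur, so we are in the regime of \eqref{eq_EM2} and \cite[Thm. 2]{ericksonmaller05} yields $|\hat{\frace}^j_{(\xi,\eta)}(t)|\overset{\PP_j}{\longrightarrow}\infty$ as $t\to\infty$.

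It then remains to carry this divergence over to $\frace_{(\xi,\eta)}(t)$, and for this I would reuse the decomposition \eqref{eq-weakperturb} from the proof of Proposition~\ref{prop-weak}. Along $t\in\TT_j$ one has $\frace_{(\xi,\eta)}(t)=\hat{\frace}^j_{(\xi,\eta)}(\theta(t))$, where $\theta(t)$ is the amount of time $(J_s)$ has spent in state $j$ by time $t$, a quantity that is non-decreasing on $\TT_j$ and tends to $\infty$ $\PP_j$-a.s.\ (each sojourn in $j$ is exponential with mean $-1/q_{j,j}>0$ and there are infinitely many of them). For $t\notin\TT_j$ the same identity holds at the last exit time $\tau^-_{N_t+1}(j)$, up to the additive correction $e^{-\xi_{\tau^-_{N_t+1}(j)}}\int_{(\tau^-_{N_t+1}(j),t]}e^{-(\xi_{s-}-\xi_{\tau^-_{N_t+1}(j)})}\dd\eta_s$; this correction is the product of $e^{-\xi_{\tau^-_{N_t+1}(j)}}\to0$ with a factor whose supremum over the current excursion is tight by the key renewal theorem, exactly as shown for $M_t$ in Proposition~\ref{prop-weak}, hence it vanishes in $\PP_j$-probability. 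Combining the divergence of $|\hat{\frace}^j_{(\xi,\eta)}|$ with Slutzky's theorem and this estimate would give $|\frace_{(\xi,\eta)}(t)|\overset{\PP_j}{\longrightarrow}\infty$ for all $t$, and positive recurrence of $(J_t)_{t\geq0}$ (so that $\tau_1(j)<\infty$ $\PP_{j'}$-a.s.\ for every $j'$) then upgrades this to $|\frace_{(\xi,\eta)}(t)|\overset{\PP_\pi}{\longrightarrow}\infty$, as in the closing paragraph of the proof of Proposition~\ref{prop-weak}.

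The step I expect to be the main obstacle is precisely this last transfer: passing from $|\hat{\frace}^j_{(\xi,\eta)}(s)|\overset{\PP_j}{\longrightarrow}\infty$ for deterministic $s$ to $|\hat{\frace}^j_{(\xi,\eta)}(\theta(t))|\overset{\PP_j}{\longrightarrow}\infty$ along $t\in\TT_j$. Unlike almost sure convergence, divergence in probability is \emph{not} automatically stable under a random time change, and $\theta(t)$ is correlated with $\hat{\frace}^j_{(\xi,\eta)}$ through the sojourn and excursion lengths, so a naive conditioning on $\theta(t)$ is unavailable. To get around this I would express the law of $\frace_{(\xi,\eta)}(t)$ through the Markov renewal structure of the successive returns to $j$ and apply the key renewal theorem, in the spirit of the computation of the limit law of $M_t$ in Proposition~\ref{prop-weak}, in order to show directly that $\PP_j(|\frace_{(\xi,\eta)}(t)|\leq K)\to0$ for every $K$; alternatively, one may work with the i.i.d.\ perpetuity $\frace_{(\xi,\eta)}(\tau_n(j))$ (which is $\hat{\frace}^j_{(\xi,\eta)}$ sampled at its conflation jump times), apply the Goldie--Maller trichotomy \cite{goldiemaller00} to it, ruling out its almost sure convergence branch via the failure of \eqref{eq-weakEMcondition} (equivalently \eqref{eq-nesssuffweak}) together with Erickson and Maller's discretisation argument, and its degenerate branch via the equivalences recalled in Section~\ref{s41} (so that the failure of \eqref{eq_deg00} is exactly what is needed), and finally bridge from the return-time skeleton to general $t$ using the remainder bound already established in Section~\ref{s42}.
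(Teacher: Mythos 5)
Your proposal is correct and follows essentially the same route as the paper: conflate at state $j$, observe that failure of \eqref{eq-nesssuffweak} makes the Erickson--Maller integral \eqref{eq-weakEMcondition} infinite so that $|\hat{\frace}^j_{(\xi,\eta)}(t)|$ diverges in $\PP_j$-probability, and transfer this back to $\frace_{(\xi,\eta)}(t)$ via the decomposition from Proposition \ref{prop-weak}; the only organizational difference is that you delegate the case where $\hat{\xi}^j$ does not drift to $+\infty$ to the preceding proposition, whereas the paper stays with the conflated process throughout and instead uses the failure of \eqref{eq_deg00} to exclude the degenerate Erickson--Maller alternative \eqref{eq_EM3}. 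The random-time-change subtlety you flag in the transfer step is a legitimate concern, but the paper treats it no more explicitly than you do --- it simply invokes ``the same argumentation as at the end of the proof of Proposition \ref{prop-weak}'' --- so your proposal is at least as complete as the published argument.
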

\begin{proof}
	Assume \eqref{eq-nesssuffweak} fails for all $j\in\cS$. 
	Fixing $j$ we follow the lines of the proof of Proposition \ref{prop-weak} up to failure of \eqref{eq-weakEMcondition} and conclude by \cite[Thm. 2]{ericksonmaller05} that \begin{equation}\label{eq-divEM}
	\big|\hat{\frace}^j_{(\xi,\eta)}(t)\big| \overset{\PP_j}\longrightarrow \infty, \quad t\to \infty,\end{equation}
	whenever the conflated integral is not degenerate, i.e. if there is no constant $c_j\in\RR$ such that
	\begin{equation}\label{eq-degconflated}
	\hat{\frace}^j_{(\xi,\eta)} (t) = c_j - c_j e^{-\hat{\xi}_t} \quad \text{for all } t\geq 0 \quad\PP_j\text{-a.s.}\end{equation}
	This follows from failure of \eqref{eq_deg00} as \eqref{eq-degconflated} is either true for all $j\in\cS$ or none. More precisely, \eqref{eq-degconflated} is equivalent to $\hat{\eta}^j_t=-c_j \hat{U}_t^j$, which is a consequence of Proposition \ref{prop_degcond}. This in turn is equivalent to $\eta^{(j)}_t=-c_j U_t^{(j)}$ $\PP_j$-a.s. and 
	\begin{equation}\label{eq-degconflated2} \eta_{\tau_k(j)}-\eta_{\tau^-_k(j)}=-c_j (U_{\tau_k(j)}-U_{\tau^-_k(j)}) \quad \PP_j\text{-a.s.}\end{equation}
	However, if \eqref{eq-degconflated} fails for some $j'\neq j$, then \eqref{eq-degconflated2} necessarily fails as well.\\
	By the same argumentation as at the end of the proof of Proposition \ref{prop-weak}, the divergence \eqref{eq-divEM} implies divergence of $|\frace_{(\xi,\eta)}(t)|$ in $\PP_j$-probability. As $j$ was chosen arbitrarily this yields the result. 
\end{proof}

	\section{Sufficient conditions}\label{S4}
	\setcounter{equation}{0}
	
	Although Theorem \ref{thmnessandsuff} provides necessary and sufficient conditions for convergence of $\frace_{(\xi,\eta)}(t)$ it is hardly applicable as the given assumptions are difficult to check. Thus this section aims at additional, easy to check conditions for convergence of $\frace_{(\xi,\eta)}(t)$. In particular we will formulate  conditions in terms of the long term mean $\kappa_\xi$.
	
	To this end we decompose the exponential integral $\frace_{(\xi,\eta)}(t)$ using the L\'evy-It\^o-type decomposition \eqref{eq_MAP04} as follows.
	\begin{align}
	\frace_{(\xi,\eta)}(t)&= \int_{(0,t]} e^{-\xi_{s-}} d(\gamma^\eta_s+W^\eta_s+Y^{b,\eta}_s+Y^{s,\eta}_s + Y^{(2)}_s) \nonumber\\
	&= \int_{(0,t]} e^{-\xi_{s-}} d(\gamma^\eta_s + W^\eta_s+Y^{s,\eta}_s) + \int_{(0,t]} e^{-\xi_{s-}} d(Y^{b,\eta}_s+Y^{(2)}_s) \nonumber \\
	&=: \frace^{(1)}(t)+\frace^{(2)}(t).
	\label{eq_fracesplit}
	\end{align}
	
	We now treat the two exponential integrals in \eqref{eq_fracesplit} separately. First, to study $\frace^{(1)}$ we need the following technical lemma. 
	
	\begin{lemma}\label{lem_martingale}
		The process $(W_t^\eta+Y^{s,\eta}_t)_{t\geq 0}$ is a martingale. Furthermore $(W_t^\eta+Y^{s,\eta}_t)_{t\geq 0}$ is square-integrable if $\sup_{j\in\cS} (\sigma^2_{\eta^{(j)}} + \int_{(0,1)} x^2 \nu_{\eta^{(j)}}(dx) )  <\infty$.
	\end{lemma}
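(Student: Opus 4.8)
The plan is to reduce the statement to classical facts about L\'evy processes by first conditioning on the whole trajectory of the Markovian component $(J_t)_{t\ge0}$. Recall from \eqref{eq_MAP04} that $W^\eta$ is the Gaussian part of $\eta$ and $Y^{s,\eta}$ its compensated small-jump part, both obtained by feeding the random index $J_s$ into the L\'evy--It\^o ingredients of the driving L\'evy processes $(\eta^{(j)})_{j\in\cS}$, and that the driving Brownian motion $(W_t)_{t\ge0}$ and the Poisson random measures $(N_{\eta^{(j)}})_{j\in\cS}$ are mutually independent and independent of $(J_t)_{t\ge0}$. Since $\FF$ is the augmented natural filtration of $(\xi_t,\eta_t,J_t)_{t\ge0}$, the process $(W^\eta_t+Y^{s,\eta}_t)_{t\ge0}$ is $\FF$-adapted, so only the centering identity (and, for the second assertion, the $L^2$-bound) has to be checked.

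For the martingale property fix $0\le s<t$ and set $\cG:=\cF^J_\infty\vee\cF_s$, so that $\cF_s\subseteq\cG$. Given $\cG$ the full path $(J_u)_u$ is known, so the increment $(W^\eta_t-W^\eta_s)+(Y^{s,\eta}_t-Y^{s,\eta}_s)$ is a deterministic (given $\cG$) functional of the increments of $W$ and of the $N_{\eta^{(j)}}$ over $(s,t]$; by independence of increments of Brownian motion and of Poisson random measures, and since the noise is independent of $J$, those increments are independent of $\cG$. Decomposing $(s,t]$ along the maximal intervals on which $J$ is constant, the increment is a convergent sum of independent, mean-zero martingale increments of the driving L\'evy processes (the Gaussian part $\sigma_{\eta^{(j)}}(W_b-W_a)$ and the compensated small-jump part of $\eta^{(j)}$ on an interval $(a,b]$ where $J\equiv j$; cf. \cite[Thm. 19.3]{sato2nd} and the standard L\'evy-martingale theory of \cite{sato2nd}), hence has vanishing $\cG$-conditional expectation. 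By the tower property and $\cF_s\subseteq\cG$,
\begin{equation*}
\EE\big[(W^\eta_t+Y^{s,\eta}_t)-(W^\eta_s+Y^{s,\eta}_s)\mid\cF_s\big]=0,
\end{equation*}
which together with $L^1$-integrability (immediate from the square-integrability below, or from the localization built into the stochastic-integral representation) yields the martingale property.

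For the second assertion put $K:=\sup_{j\in\cS}\big(\sigma_{\eta^{(j)}}^2+\int_{(0,1)}x^2\,\nu_{\eta^{(j)}}(dx)\big)<\infty$ and condition on $\cF^J_\infty$. As $W^\eta$ is continuous, $Y^{s,\eta}$ is purely discontinuous, and the two are driven by independent noises, their quadratic covariation vanishes and the cross term drops; applying the It\^o isometry to $W^\eta$ and the L\'evy--It\^o isometry for the compensated small jumps on each interval of constancy of $J$ and summing gives
\begin{equation*}
\EE\big[(W^\eta_t+Y^{s,\eta}_t)^2\mid\cF^J_\infty\big]=\int_{(0,t]}\sigma_{\eta^{(J_s)}}^2\,ds+\int_{(0,t]}\int_{(0,1)}x^2\,\nu_{\eta^{(J_s)}}(dx)\,ds\ \le\ Kt .
\end{equation*}
Taking expectations yields $\EE[(W^\eta_t+Y^{s,\eta}_t)^2]\le Kt<\infty$ for all $t\ge0$, the claimed square-integrability.

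The step I expect to be the main obstacle is the piecewise bookkeeping when $(J_t)_t$ has infinitely many jumps on a bounded interval: one must justify that the sum over the (possibly countably infinitely many) intervals of constancy of $J$ of the driving L\'evy-martingale increments converges and retains both the mean-zero property and the isometry. This is handled by noting that, conditionally on $\cF^J_\infty$, those increments are independent with summable variances of total mass $\int_{(0,t]}\big(\sigma_{\eta^{(J_s)}}^2+\int_{(0,1)}x^2\,\nu_{\eta^{(J_s)}}(dx)\big)\,ds$, so the series converges in conditional $L^2$; without the bound $K<\infty$ the same representation still exhibits $W^\eta+Y^{s,\eta}$ as a stochastic integral against martingale measures, which the centering computation upgrades from a local to a genuine martingale as soon as the relevant increments are integrable.
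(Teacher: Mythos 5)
Your proposal is correct and follows essentially the same route as the paper: condition on the trajectory of $(J_t)_{t\ge 0}$ (the paper conditions on $\{J_u,\ s<u\le t\}$ together with $\cF_s$, you on $\cF^J_\infty\vee\cF_s$), use that the conditional increments are mean-zero L\'evy-martingale increments plus the tower property, and for square-integrability bound the (expected) quadratic variation by $t\cdot\sup_{j\in\cS}\bigl(\sigma^2_{\eta^{(j)}}+\int_{(0,1)}x^2\,\nu_{\eta^{(j)}}(dx)\bigr)$. The only cosmetic difference is that the paper invokes \cite[Cor.~II.3]{protter} via $\EE[\langle W^\eta+Y^{s,\eta}\rangle_t]$ rather than computing the conditional second moment directly through the It\^o isometry.
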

	\begin{proof}
		For the martingale property note that for $0\leq s\leq t$ 
		\begin{align*}
		\EE[W_t^\eta+Y^{s,\eta}_t|\cF_s]&= 	\EE\big[ \EE[W_t^\eta+Y^{s,\eta}_t| \cF_s, J_u=j(u), s<u\leq t] |\cF_s\big] \\
		&=  \EE\big[ \EE[W_s^\eta + Y^{s,\eta}_s | \cF_s, J_u=j(u), s<u\leq t] |\cF_s\big]\\
		&\quad 	+ \EE\big[ \EE[ \int_{(s,t]} \sigma^2_{\eta^{(j(u))}} dW_u  +  \lim_{\varepsilon \to 0} \int_{(s,t]} \int_{\varepsilon\leq |x| <1} x (N_{\eta^{(j(u))}}(du,dx) - du \,\nu_{\eta^{(j(u))}}(dx))  \\
		& \qquad \qquad | \cF_s, J_u=j(u), s<u\leq t] |\cF_s\big]\\
		&= W^\eta_s+Y^{s,\eta}_s,
		\end{align*}
		where in the last step we have used that $W_t^{\eta^{(j)}}$ and $Y_t^{s,\eta^{(j)}}$ are (square-integrable) martingales for any $j\in\cS$. Square-integrability of  $(W_t^\eta+Y^{s,\eta}_t)_{t\geq 0}$ under the given condition follows from \cite[Cor. II.3]{protter} and 
		\begin{align*}
		\EE[ \langle W^\eta+Y^{s,\eta}\rangle_t]&= \EE\left[\int_{(0,t]} d \langle W^{\eta^{(J_s)}}+Y^{s,\eta^{(J_s)}} \rangle_s \right]\\
		&= \EE\left[\int_{(0,t]} (\sigma^2_{\eta^{(J_s)}} + \int_{(0,1)} x^2 \nu_{\eta^{(J_s)}}(dx) )  ds \right]\\
		&\leq t\cdot \sup_{j\in\cS} \left(\sigma^2_{\eta^{(j)}} + \int_{(0,1)} x^2 \nu_{\eta^{(j)}}(dx) \right) <\infty.
		\end{align*}
	\end{proof}
	
	Following ideas from \cite{ericksonmaller05} we now show a.s. convergence of $\frace^{(1)}(t)$ as $t\to \infty$ under rather weak conditions.
	
	\begin{proposition}\label{prop_frace1strongconv}
		Assume that $0<\kappa_\xi<\infty$ and 
\begin{equation}\label{eq_proofs0} \sup_{j\in\cS} \left(|\gamma_{\eta^{(j)}}|+ \sigma^2_{\eta^{(j)}} + \int_{(0,1)} x^2 \nu_{\eta^{(j)}} (dx) \right) <\infty.\end{equation} Then 
		$\frace^{(1)}(t)$ converges $\PP_\pi$-a.s. to a finite random variable as $t\to \infty$.
	\end{proposition}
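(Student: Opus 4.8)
The plan is to exploit that $0<\kappa_\xi<\infty$ forces $\xi_t$ to grow linearly, so that $e^{-\xi_{s-}}$ decays exponentially, and then to treat the finite-variation (drift) part of $\frace^{(1)}(t)$ pathwise and the martingale part by $L^2$-localization. The starting observation is that, since $\cS$ is countable and $0<\kappa_\xi<\infty$, \cite[Cor. 2.2]{kellarama} (see the text following \eqref{eq-MAP09}) gives $\xi_t/t\to\kappa_\xi$ $\PP_\pi$-a.s.; hence for any fixed $\lambda>0$ there is $\PP_\pi$-a.s.\ a finite random time $s_0$ with $\xi_{s-}\ge\tfrac12\kappa_\xi s$ for all $s>s_0$, while $s\mapsto e^{-\xi_{s-}}$ is bounded on the compact interval $[0,s_0]$ (left limits of a c\`adl\`ag function). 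Consequently $\int_{(0,\infty)} e^{-\lambda\xi_{s-}}\,ds<\infty$ $\PP_\pi$-a.s.\ for every $\lambda>0$, and this is the only place the hypothesis on $\kappa_\xi$ is used.

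Next I would write $N_t:=W^\eta_t+Y^{s,\eta}_t$ and split off the drift,
\[
\frace^{(1)}(t)=\int_{(0,t]} e^{-\xi_{s-}}\gamma_{\eta^{(J_s)}}\,ds+\int_{(0,t]} e^{-\xi_{s-}}\,dN_s=:D(t)+M(t).
\]
By \eqref{eq_proofs0}, $|D(t)|\le\big(\sup_{j\in\cS}|\gamma_{\eta^{(j)}}|\big)\int_{(0,t]} e^{-\xi_{s-}}\,ds$, which is nondecreasing in $t$ and bounded by $\big(\sup_{j\in\cS}|\gamma_{\eta^{(j)}}|\big)\int_{(0,\infty)} e^{-\xi_{s-}}\,ds<\infty$ $\PP_\pi$-a.s.\ by the first step; hence $D(t)$ converges absolutely $\PP_\pi$-a.s.

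For $M(t)$ I would use Lemma \ref{lem_martingale}: under \eqref{eq_proofs0} the process $N$ is a square-integrable martingale, and the computation in the proof of that lemma shows $\langle N\rangle$ is absolutely continuous with density bounded by $C:=\sup_{j\in\cS}\big(\sigma^2_{\eta^{(j)}}+\int_{(0,1)}x^2\nu_{\eta^{(j)}}(dx)\big)<\infty$. Since $s\mapsto e^{-\xi_{s-}}$ is predictable and locally bounded, $M$ is a locally square-integrable local martingale with $\langle M\rangle_t=\int_{(0,t]} e^{-2\xi_{s-}}\,d\langle N\rangle_s\le C\int_{(0,t]} e^{-2\xi_{s-}}\,ds$, so $\langle M\rangle_\infty<\infty$ $\PP_\pi$-a.s.\ again by the first step. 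Localizing along $T_n:=\inf\{t:\langle M\rangle_t>n\}$ and using that $\langle M\rangle$ is continuous, each stopped process $M^{T_n}$ has $\EE[\langle M^{T_n}\rangle_\infty]\le n<\infty$, hence is an $L^2$-bounded martingale and converges a.s.; on the event $\{\langle M\rangle_\infty<n\}\subseteq\{T_n=\infty\}$ it coincides with $M$, so letting $n\to\infty$ shows that $M(t)$ converges $\PP_\pi$-a.s.\ to a finite limit (alternatively one may simply invoke the standard fact that a locally square-integrable local martingale with a.s.\ finite angle bracket at infinity converges a.s.). Adding the two limits gives the assertion.

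The argument is short and I do not expect a serious obstacle; the only points needing a little care are that $e^{-\xi_{s-}}$ is merely locally rather than globally bounded — dealt with by separating the random initial interval $[0,s_0]$ from the exponentially decaying tail in the first step — and making the local-martingale convergence in the last step precise via the localization just described.
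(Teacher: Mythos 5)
Your proof is correct and follows essentially the same route as the paper: the same splitting of $\frace^{(1)}$ into a drift and a martingale part, the same use of $\xi_t/t\to\kappa_\xi>0$ to control $e^{-\xi_{s-}}$ beyond a finite random time, and Lemma \ref{lem_martingale} for the bound on the angle bracket. The only (harmless) variation is at the final step: the paper replaces $\xi$ by $\lambda_t=\xi_t\vee ct$ and applies It\^o's isometry to obtain a uniformly $L^2$-bounded martingale, whereas you localize along $\{\langle M\rangle_t>n\}$ and invoke a.s.\ convergence of a locally square-integrable local martingale on $\{\langle M\rangle_\infty<\infty\}$ --- both arguments are standard and valid.
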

\begin{proof}
	It is sufficient to prove convergence of the given integral over some interval $(L,\infty)$, with possibly random $L\in[0,\infty)$. To find a suitable $L$ fix some constant $c\in(0,\kappa)$ and set $L:= \sup\{t\geq 0:\, \xi_{t-} - ct \leq 0 \}$ if the set is not empty and $L:=0$ otherwise. Then $L$ is a random variable such that $\xi_t\geq ct$  for all $t>L$ and it remains to consider
		\begin{align}\label{eq_proofs1}
	  \int_{(L,\infty)} e^{-\xi_{s-}} d(\gamma^\eta_s + W^\eta_s+Y^{s,\eta}_s) &= \int_{(L,\infty)} e^{-\xi_{s-}} d\gamma^\eta_s + \int_{(L,\infty)} e^{-\xi_{s-}}d(W^\eta_s+Y^{s,\eta}_s),
	\end{align}
	where 
	\begin{align*}
	\left|\int_{(L,\infty)} e^{-\xi_{s-}} d\gamma^\eta_s\right| &= \left|\int_{(L,\infty)} e^{-\xi_{s-}} \gamma_{\eta^{(J_s)}} ds \right| \leq \int_{(L,\infty)} e^{-\xi_{s-}} |\gamma_{\eta^{(J_s)}}| ds \\
	&\leq \sup_{j\in\cS} \left(|\gamma_{\eta^{(j)}}|\right)  \int_{(L,\infty)} e^{-\xi_{s-}} ds
	\leq \sup_{j\in\cS} \left(|\gamma_{\eta^{(j)}}|\right)  \int_{(L,\infty)} e^{-cs} ds\\
	&<\infty.
	\end{align*}
	For the second integral in \eqref{eq_proofs1} define $\lambda_t:=\xi_t \vee ct$, then $\lambda_{t-} \geq ct$ for all $t\geq 0$ and $\lambda_{t-}=\xi_{t-}$ for all $t>L$, such that in particular 
	$$\lim_{t\to \infty} \int_{(L,L\vee t]} e^{-\xi_{s-}}d(W^\eta_s+Y^{s,\eta}_s) = \lim_{t\to \infty} \int_{(L,L\vee t]} e^{-\lambda_{s-}}d(W^\eta_s+Y^{s,\eta}_s) \qquad \PP_\pi\text{-a.s.}$$
	By Lemma \ref{lem_martingale} the process $W^\eta_s+Y^{s,\eta}_s$ is a square-integrable martingale with mean $0$ and quadratic variation
	\begin{align*}
	\langle W^\eta+Y^{s,\eta}\rangle_t &= \int_{(0,t]} (\sigma^2_{\eta^{(J_s)}} + \int_{(0,1)} x^2 \nu_{\eta^{(J_s)}} (dx) )  ds =: \int_{(0,t]} \rho(J_s) ds.
	\end{align*}
	Thus using It\^o's isometry 
	\begin{align*}
	\EE_\pi\left[\left(\int_{(0,t]}  e^{-\lambda_{s-}}d(W^\eta_s+Y^{s,\eta}_s) \right)^2 \right]
	&= \int_{(0,t]}  \EE\left[e^{-2\lambda_{s-}}  \right] d\langle W^\eta+Y^{s,\eta}\rangle_s
	= \int_{(0,t]}  \EE\left[e^{-2\lambda_{s-}}  \right]\rho(J_s) ds\\
	&\leq \int_{(0,t]} e^{-2cs} \rho(J_s) ds \leq \sup_{j\in\cS} (\rho(J_s))   \int_{(0,t]} e^{-2cs} ds\\
	&\leq \frac{1}{2c} \sup_{j\in\cS} (\rho(J_s))   <\infty,
	\end{align*}
	such that $t\mapsto \int_{(0,t]} e^{-\lambda_{s-}}d(W^\eta_s+Y^{s,\eta}_s)$ is a  martingale with bounded, converging second moments. It therefore converges $\PP_\pi$-a.s. as $t\to\infty$ which yields the claim.
\end{proof}

\begin{remark}
	The above obtained sufficient condition for convergence of $\frace^{(1)}$ is not optimal and only chosen for presentation here as it is easy to check and interpret. If needed, necessary and sufficient conditions for convergence of $\frace^{(1)}$ could as well be obtained by applying Theorem \ref{thmnessandsuff} in this case. 
\end{remark}

	Clearly, for $\cS$ finite, in Proposition \ref{prop_frace1strongconv} we can drop the assumptions \eqref{eq_proofs0} and $\kappa_\xi<\infty$. Nevertheless, for countable $\cS$, \eqref{eq_proofs0} is not redundant as will be outlined by the following example.

\begin{example}
	Consider the petal flower Markov process $(J_t)_{t\geq 0}$ as defined in Example \ref{ex-xinottoinfty}.
	Choose $\xi$ and $\eta$ to be conditionally independent with $Y_t^{(2)}\equiv0$ and
	$$\xi_t=X_t^{(2)} = \sum_{n\geq 1} \sum_{i,j\in\NN} Z_n^{(i,j)} \mathds{1}_{\{J_{T_n-}=i,J_{T_n}=j, T_n\leq t\}}$$
	where 
	$$Z_n^{(i,j)}:=Z^{(i,j)}:=\begin{cases}
	2, & j=1,\\
	0, & \text{otherwise.}
	\end{cases}$$
	Then $\xi_t\to \infty$ $\PP_\pi$-a.s. as $t\to\infty$ with 	\begin{align*}
	\kappa_\xi & = \sum_{\substack{(i,j)\in \NN\times \NN \\ i\neq j}} \pi_i  q_{i,j} \EE[Z^{(i,j)}] 
	= 2 \sum_{i\in\NN\backslash\{1\}} \pi_i q_{i,1} = 2 \sum_{i\in\NN\backslash\{1\}} \frac{q_{1,i}}{2q} q
	= q.
	\end{align*}
	Further, setting $\eta_t=\int_{(0,t]} \gamma_{\eta^{(J_s)}} ds$ with 
	$$\gamma_{\eta^{(j)}}=\begin{cases}
	0,& j=1,\\
	\exp\left(p_{1,j}^{-1} \right),& \text{otherwise},
	\end{cases}$$ 
	we compute under $\PP_1$
	\begin{align*}
	\int_{(0,t]} e^{-\xi_{s-}} d\eta_s &= \int_{(0,t]} e^{-\xi_{s-}} \gamma_{\eta^{(J_s)}} ds
	= \int_{(0,t]} \exp\left(-\xi_{s-} + p_{1,J_s}^{-1} \mathds{1}_{\{J_s\neq 1\}}\right)  ds
	\end{align*}
	which diverges, as by an argumentation as in Example \ref{ex-xinottoinfty} using the Borel-Cantelli lemma
	$$\limsup_{t\to\infty} (-\xi_{t-} + p_{1,J_t}^{-1} \mathds{1}_{\{J_t\neq 1\}}) =\infty \quad \PP_1\text{-a.s.}$$			
\end{example}

The next proposition gives conditions for almost sure convergence and convergence in probability of the exponential integral $\frace^{(2)}$ as defined in \eqref{eq_fracesplit}.

\begin{proposition}\label{prop_frace2conv}
	Assume $0<\kappa_\xi<\infty$.
	\begin{enumerate}
		\item The exponential integral $\frace^{(2)}(t)$ converges $\PP_\pi$-a.s. to a finite random variable as $t\to \infty$ if and only if 	
		\begin{equation}
		\label{eq_proofs4}
		\int_{(1,\infty)} \log q \; \PP_j\Big(\sup_{0< t\leq \tau_1(i)} e^{-\xi_{t-}} | \Delta(Y^{b,\eta}_t+ Y^{(2)}_t) |\in dq\Big)<\infty \quad \text{for all} \quad j\in \cS.
		\end{equation} 
		\item The exponential integral $\frace^{(2)}(t)$ converges in $\PP_j$-probability to some random variable $\frace^{(2)}_\infty$ as $t\to\infty$, 
		if and only if \begin{equation}
		\label{eq_proofs5}
		\int_{(1,\infty)} \log q \; \PP_j\left(\left|\int_{(0,\tau_1(i)]} e^{-\xi_t-} d(Y^{b,\eta}_t+ Y^{(2)}_t) \right|\in dq\right)<\infty.
		\end{equation} 
		Moreover, in this case convergence in $\PP_j$-probability holds for all $j\in \cS$.
	\end{enumerate}
\end{proposition}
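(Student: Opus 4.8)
The plan is to recognise $\frace^{(2)}$ as the exponential integral of a MAP to which Theorem~\ref{thmnessandsuff} applies directly. Indeed, $(\xi_t,\,Y^{b,\eta}_t+Y^{(2)}_t,\,J_t)_{t\geq 0}$ is again a bivariate Markov additive process: same modulating chain $(J_t)$, same first component $\xi$, and second component $\tilde\eta:=Y^{b,\eta}+Y^{(2)}$ which, conditionally on $(J_t)$, is the superposition of the big-jump parts of the $\eta^{(j)}$ (state-$j$ L\'evy measure $\nu_{\eta^{(j)}}|_{\{|x|\geq 1\}}$, no Gaussian part, no drift, no compensated small jumps) and the common jumps $Z^{(i,j)}$ at transitions of $(J_t)$. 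Hence $\frace^{(2)}(t)=\frace_{(\xi,\tilde\eta)}(t)$; if $\tilde\eta\equiv 0$ both assertions are trivial, so assume not and apply Theorem~\ref{thmnessandsuff} with $\eta$ replaced by $\tilde\eta$. The hypothesis $0<\kappa_\xi<\infty$ (so in particular $\EE_j[\,|\xi^{(j)}_1|\,]<\infty$) gives $\xi_t/t\to\kappa_\xi$ $\PP_\pi$-a.s., hence $\lim_{t\in\TT_j}\xi_t=\infty$ $\PP_j$-a.s.\ for every $j$ and $\EE_j[\xi_{\tau_1(j)}]=\kappa_\xi\,\EE_j[\tau_1(j)]\in(0,\infty)$; consequently the denominator $\int_{(0,\log q]}\PP_j(\xi_{\tau_1(j)}>u)\,du=\EE_j[\xi_{\tau_1(j)}^+\wedge\log q]$ in \eqref{eq-nesssuffas}, and likewise the Erickson--Maller function $A^j_\xi(\log q)$ in \eqref{eq-nesssuffweak}, stay bounded between two positive constants for all large $q$, so in both criteria of Theorem~\ref{thmnessandsuff} the weight may be replaced by $\log q$.

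For part~(i), the almost sure part of Theorem~\ref{thmnessandsuff} then reads: $\frace^{(2)}(t)\to\frace^{(2)}_\infty$ $\PP_\pi$-a.s.\ iff $\EE_j[\log^+ S_j]<\infty$ for all $j$, where $S_j:=\sup_{0<t\leq\tau_1(j)}\big|\int_{(0,t]}e^{-\xi_{s-}}\,d\tilde\eta_s\big|$. It remains to replace $S_j$ by the largest single jump $W^{(2)}_j:=\sup_{0<t\leq\tau_1(j)}e^{-\xi_{t-}}|\Delta\tilde\eta_t|$ appearing in \eqref{eq_proofs4}. One inequality is immediate, since a c\`adl\`ag path's largest jump is at most twice its sup norm: $W^{(2)}_j\leq 2S_j$. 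For the reverse, $\tilde\eta$ has finite activity on $(0,\tau_1(j)]$, so $S_j\leq\mathfrak N_j\,W^{(2)}_j$ with $\mathfrak N_j$ the number of jumps of $\tilde\eta$ in the first return cycle; $\mathfrak N_j$ has a finite $\log^+$-moment under $\PP_j$ (it is dominated by the number of transitions of $(J_t)$ in a return cycle plus a conditionally Poisson variable whose mean is controlled through $\EE_j[\tau_1(j)]<\infty$), so $\log^+ S_j\leq\log^+\mathfrak N_j+\log^+ W^{(2)}_j$ yields the equivalence. In the degenerate case \eqref{eq_deg00} one checks \eqref{eq_proofs4} directly from the explicit form of $\tilde\eta$ provided by Proposition~\ref{prop_degcond}.

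For part~(ii), apply the convergence-in-probability part of Theorem~\ref{thmnessandsuff} to $(\xi,\tilde\eta,J)$. By the same reduction, condition \eqref{eq-nesssuffweak} for $\tilde\eta$ reads $\int_{(1,\infty)}\log q\,|d\bar\nu_{\tilde\eta}^{j}(q)|<\infty$; identifying the conflated L\'evy measure $\bar\nu_{\tilde\eta}^{j}$ exactly as in the proof of Proposition~\ref{prop-weak} — it records the big $\eta^{(j)}$-jumps during sojourns at $j$, the common jumps, and the excursion integrals $\int_{[\tau^-_1(j),\tau_1(j)]}e^{-(\xi_{s-}-\xi_{\tau^-_1(j)})}\,d\tilde\eta_s$ — turns this into \eqref{eq_proofs5}, and Proposition~\ref{prop-weak} yields convergence in $\PP_{j'}$-probability for all $j'\in\cS$. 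For the converse, if \eqref{eq_proofs5} fails then the conflated perpetuity is non-degenerate in the sense of \eqref{eq_deg00} (for a degenerate integral the cycle integral equals $c_j(1-e^{-\xi_{\tau_1(j)}})$, which has a finite $\log^+$-moment because $\EE_j[\xi_{\tau_1(j)}^-]<\infty$, so \eqref{eq_proofs5} would hold), hence by \cite{goldiemaller00} the i.i.d.\ perpetuity $\big(\frace^{(2)}(\tau_n(j))\big)_n=\big(\sum_{k=1}^n\prod_{\ell=1}^{k-1}e^{-(\xi_{\tau_\ell(j)}-\xi_{\tau_{\ell-1}(j)})}\int_{(\tau_{k-1}(j),\tau_k(j)]}e^{-(\xi_{s-}-\xi_{\tau_{k-1}(j)})}\,d\tilde\eta_s\big)_n$ satisfies $|\frace^{(2)}(\tau_n(j))|\to\infty$ in $\PP_j$-probability; discretising along a skeleton chain of $(J_t)$ so that the overshoot factor stays bounded away from $0$ — exactly as in the divergence part of the proof of Theorem~\ref{thmnessandsuff} — this forces $|\frace^{(2)}(t)|\to\infty$ in $\PP_\pi$-probability, so $\frace^{(2)}(t)$ cannot converge in probability.

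The contrast between~(i) and~(ii) mirrors the one already present in Theorem~\ref{thmnessandsuff}: in~(i) the overshoot $\int_{(\tau_{N_t}(j),t]}e^{-\xi_{s-}}\,d\tilde\eta_s$ must vanish $\PP_j$-a.s., which via \cite{alsmeyerbuckmann2} forces the stronger largest-jump integrability condition, whereas in~(ii) it only needs to be tight after multiplication by $e^{-\xi_{\tau_{N_t}(j)}}\to 0$, and tightness comes for free from the key renewal theorem as in the proof of Proposition~\ref{prop-weak}; this is why convergence in probability is governed by the weaker cycle-sum condition \eqref{eq_proofs5}. I expect the main obstacle to be precisely the equivalence in~(i) of the sup-of-partial-integrals quantity $S_j$ with the largest-jump quantity $W^{(2)}_j$, i.e.\ the $\log^+$-integrability of the cycle jump count $\mathfrak N_j$, together with the bookkeeping needed to identify the conflated L\'evy measure $\bar\nu_{\tilde\eta}^{j}$ so as to bring the criteria of Theorem~\ref{thmnessandsuff} into the stated form.
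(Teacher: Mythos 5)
Your overall strategy---viewing $\frace^{(2)}$ as the exponential integral of the MAP $(\xi,\tilde\eta,J)$ with $\tilde\eta:=Y^{b,\eta}+Y^{(2)}$ and invoking Theorem \ref{thmnessandsuff}---is exactly the alternative route the authors themselves flag in the remark following this proposition. The paper instead discretizes $\frace^{(2)}$ at the jump times $\tilde T_n$ of $(Y^{b,\eta}_t+J_t)_{t\geq 0}$, obtaining a perpetuity $\sum_{i}\big(\prod_{k\leq i}A_k\big)B_i$ with $B_i=\Delta(Y^{b,\eta}_{\tilde T_i}+Y^{(2)}_{\tilde T_i})$ modulated by a retarded chain $\tilde J$, and applies \cite[Thms. 3.1 and 3.4]{alsmeyerbuckmann1} directly. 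That choice is not cosmetic: the Alsmeyer--Buckmann criteria are already phrased in terms of the maximal individual summand $W_j=\max_k\big|\prod_{\ell<k}A_\ell\, B_k\big|=\sup_{0<t\leq\tau_1(j)}e^{-\xi_{t-}}|\Delta\tilde\eta_t|$ (a.s. case) and of the cycle partial sum $Z_{\tilde\tau_1(j)}$ (in-probability case), so \eqref{eq_proofs4} and \eqref{eq_proofs5} drop out verbatim, and the only remaining work is the Wald-identity argument showing that the denominator $\EE_j[\xi_{\tau_1(j)}^+\wedge\log q]$ is pinched between two positive constants (using $0<\kappa_\xi<\infty$) and may be discarded. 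Your route, by contrast, must bridge two translation gaps that the paper's route never meets.

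First, in part (i) you need to pass from the theorem's quantity $S_j=\sup_{0<t\leq\tau_1(j)}\big|\int_{(0,t]}e^{-\xi_{s-}}\,d\tilde\eta_s\big|$ to the largest discounted jump $W^{(2)}_j$. The inequality $S_j\leq\mathfrak{N}_j W^{(2)}_j$ is fine, but the claim $\EE_j[\log^+\mathfrak{N}_j]<\infty$ does not follow from $\EE_j[\tau_1(j)]<\infty$ alone: for countable $\cS$ the big-jump intensities $\nu_{\eta^{(i)}}(\{|x|\geq 1\})$ may be unbounded in $i$, so the conditionally Poisson jump count has conditional mean $\int_0^{\tau_1(j)}\nu_{\eta^{(J_s)}}(\{|x|\geq 1\})\,ds$, whose $\log^+$-moment need not be finite; this step requires either an extra hypothesis or a different argument. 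Second, and more seriously, in part (ii) you assert that condition \eqref{eq-nesssuffweak} for $\tilde\eta$ ``turns into'' \eqref{eq_proofs5} upon identifying the conflated L\'evy measure. These are conditions on genuinely different objects: $\bar\nu^j_{\tilde\eta}$ weighs the magnitudes of \emph{individual} jumps of the conflated process (undiscounted within a sojourn at $j$, with each excursion collapsed to a single jump), whereas \eqref{eq_proofs5} weighs the signed, $e^{-\xi}$-discounted sum over an \emph{entire return cycle}, where cancellations can occur. Their equivalence is true only a posteriori, because both characterize the same mode of convergence; it cannot be read off by inspection, and you give no argument for it. Both gaps disappear if you discretize at the $\tilde T_n$ and let the Alsmeyer--Buckmann theorems deliver \eqref{eq_proofs4} and \eqref{eq_proofs5} in their native form, which is what the paper does.
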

\begin{proof}
	Assume $Y^{b,\eta}_t+Y^{(2)}_t\not\equiv 0$ as otherwise $\frace^{(2)}(t)\equiv 0$ a.s. and there is nothing to show.\\
Let $\{\tilde{T}_n, n\in\NN_0\}$ be the jump times of $(Y^{b,\eta}_t+J_t)_{t\geq 0}$ with $\tilde{T}_0:=0$ and set $\tilde{N}_t=\sum_{n=1}^\infty \mathds{1}_{\tilde{T}_n\leq t}$. Then  $\{T_n,n\in\NN_0\} \subseteq \{\tilde{T}_n, n\in\NN_0\}$ and further $\{\tilde{T}_n, n\in\NN_0\}$ contains all jump times of $(Y^{b,\eta}_t+Y^{(2)}_t)_{t\geq 0}$. Thus we can reformulate
	\begin{align}
	\frace^{(2)}(t)&= \int_{(0,t]} e^{-\xi_{s-}} d(Y^{b,\eta}_s+Y^{(2)}_s) = \sum_{i=1}^{\tilde{N}_t} e^{-\xi_{\tilde{T}_i-}} \Delta(Y^{b,\eta}_{\tilde{T}_i}+ Y^{(2)}_{\tilde{T}_i}) \nonumber \\
	&= \sum_{i=1}^{\tilde{N}_t} \prod_{k=1}^{i} e^{-(\xi_{\tilde{T}_{k}-}-\xi_{\tilde{T}_{k-1}-})} \Delta(Y^{b,\eta}_{\tilde{T}_i}+ Y^{(2)}_{\tilde{T}_i}) \nonumber \\
	&=: \sum_{i=1}^{\tilde{N}_t} \left(\prod_{k=1}^{i} A_k \right) B_i, \label{eq_proofsperp}
	\end{align}
	where 
	$$(A_n,B_n)_{n\in\NN}=\left(e^{-(\xi_{\tilde{T}_{n}-}-\xi_{\tilde{T}_{n-1}-})}, \Delta(Y^{b,\eta}_{\tilde{T}_n}+ Y^{(2)}_{\tilde{T}_n}) \right)_{n\in\NN}$$
	is a sequence of random vectors modulated by a Markov chain $(\tilde{J}_n)_{n\in\NN}$ with state space $\cS$. Hereby $\tilde{J}$ is a retarded discrete time version of $J$ whose return times 
	$$ \tilde{\tau}_0(j):=0,\quad \text{and} \quad 	\tilde{\tau}_n(j):=\inf\{k>\tilde{\tau}_{n-1}(j): \tilde{J}_k=j, \tilde{J}_{k-1}\neq j \}, \quad j\in \cS,$$
	fulfil
	$$\tilde{T}_{\tilde{\tau}_n(j)}=\tau_n(j), \quad n\in\NN, j\in\cS.$$
	Further $\tilde{J}$ inherits the positive recurrency from $J$ and is necessarily aperiodic whenever $Y^{b,\eta}_t\not \equiv 0$ which implies that $\tilde{J}$ has positive probability to stay in some state. If $Y^{b,\eta}_t \equiv 0$ and $\tilde{J}$ could be periodic, we artificially add a positive probability to stay in some state(s) and take corresponding extra jump times into account. Thus w.l.o.g. $\tilde{J}$ is aperiodic and therefore ergodic and its stationary law $\tilde{\pi}$ is equivalent to $\pi$. \\
	In our setting $\PP_{\tilde{\pi}}(A_n=0)=0$ and $\PP_{\tilde{\pi}}(B_n=0)<1$ are clearly fulfilled and we can apply \cite[Thm. 3.1]{alsmeyerbuckmann1} to prove almost sure convergence of $\frace^{(2)}$. Hereby	
	\begin{align}\label{eq_proofs2} \lim_{n\to \infty} \prod_{k=1}^{\tilde{\tau}_n(j)} A_k =\lim_{n\to \infty} \exp(-(\xi_{\tilde{T}_{\tilde{\tau}_n(j)}-}- \xi_{0})) = \lim_{n\to \infty} \exp(-\xi_{\tau_n(j)} )=0 \quad \PP_{\tilde{\pi}}\text{-a.s.}\end{align} 
	holds since $0<\kappa_\xi<\infty$ implies $\xi_t\to\infty$ $\PP_\pi$-a.s., and due to the recurrency of $(J_t)_{t\geq 0}$ we have $\lim_{n\to \infty} \tau_n(j)=\infty$ $\PP_\pi$-a.s. It remains to show equivalence of \eqref{eq_proofs4} and the second line of \eqref{eq_AB3}, which in our setting reads
	\begin{align}\label{eq_proofs3} \int_{(1,\infty)} \frac{\log q}{\int_{(0,\log q)} \PP_j( \xi_{\tau_1(j)}  >x) dx}  \PP_j(W_j\in dq) <\infty \text{ for some }j \in \cS, \end{align}
	where from \eqref{eq_AB5}
	\begin{align*}W_j &= \max_{1\leq k\leq \tilde{\tau}_1(j)}|\prod_{\ell =1}^{k} A_\ell B_k |  = \max_{1\leq k\leq \tilde{\tau}_1(j)}   e^{-\xi_{\tilde{T}_k-}}  | \Delta(Y^{b,\eta}_{\tilde{T}_k}+ Y^{(2)}_{\tilde{T}_k}) | 
	= \sup_{0< t\leq \tau_1(j)} e^{-\xi_{t-}} | \Delta(Y^{b,\eta}_t+ Y^{(2)}_t) |
	.\end{align*}
	As $0<\kappa_\xi<\infty$, by dominated convergence
	\begin{align*}
	\int_{(0,\log q)} \PP_j( \xi_{\tau_1(j)}  >x) dx = \EE_j\left[ \xi_{\tau_1(j)}^+ \wedge \log q \right] &\overset{q\to\infty} \longrightarrow \EE_j\left[ \xi_{\tau_1(j)}^+ \right] \geq \EE_j\left[ \xi_{\tau_1(j)} \right],
	\end{align*}
	where by \cite[Eq. (10)]{alsmeyerbuckmann2}
	$$\EE_j\left[ \xi_{\tau_1(j)} \right] =  \EE_\pi[\xi_{T_1}] \EE_j\left[ N(j) \right],$$
	with $N(j)\in \NN$ such that $T_{N(j)}=\tau_1(j)$. Applying Wald's equality twice yields
	$$\EE_\pi[\xi_{T_1}]\EE_j\left[ N(j) \right] = \Big(\sum_{j\in\cS} \pi_j \Big(\EE[\xi_1^{(j)}]  + \sum_{\substack{i\in\cS \\ i\neq j}} q_{j,i}\int_\RR xdF^{(j,i)}_\xi(x) \Big)\Big) \EE[T_1]\EE_j\left[ N(j) \right] = \kappa_\xi \EE_i\left[ \tau_1(j) \right]>0,$$
	and hence the denominator in the integral in \eqref{eq_proofs3} has a uniform upper bound and a uniform lower bound which is strictly positive. Thus it can be ignored. \\
	To prove convergence in $\PP_j$-probability of $\frace^{(2)}$ we apply \cite[Thm. 3.4]{alsmeyerbuckmann1} on the Markov modulated perpetuity \eqref{eq_proofsperp} and recall that the non-degeneracy condition \eqref{eq_AB2} and \eqref{eq_proofs2} hold under the given conditions. It remains to show equivalence of the second line of \eqref{eq_AB7} to \eqref{eq_proofs5} which can be done by the same arguments as in the case of almost sure convergence.  That this convergence in $\PP_j$-probability implies convergence in $\PP_{j'}$-probability for all $j'$ follows as in Proposition \ref{prop-weak}.
\end{proof}

\begin{remark}
	Alternatively to the given proof of Proposition \ref{prop_frace2conv} one could also apply Theorem \ref{thmnessandsuff} in the case $\eta_t=Y_t^{b,\eta}+Y_t^{(2)}$ to obtain similar conditions. We decided for a direct approach here as our resulting proofs were slightly shorter. The same is valid for Proposition \ref{prop_frace2conv2} below.
\end{remark}

In case of a finite state space $\cS$ we can also show  conditions for convergence of $\frace^{(2)}$ for infinite, well-defined $\kappa$. Observe that for finite state spaces and assuming non-degeneracy the two types of convergence are equivalent as stated in part (iii) of the following proposition.

To formulate our conditions we introduce 
\begin{equation}\label{eq_main2}
\bar{A}_{\xi}(x):= \sum_{j\in\cS} \pi_j \Big( \gamma_{\xi^{(j)}} + \bar{\nu}_{\xi^{(j)}}^+(1) + \int_1^x \bar{\nu}_{\xi^{(j)}}^+(y) dy + \sum_{\substack{i\in\cS \\i\neq j}} q_{i,j} \int_{\RR_+} y F_\xi^{(i,j)}(dy) \Big),
\end{equation}
which is in spirit of $A_\xi$ and $A_\xi^j$ used in Sections \ref{S1a} and \ref{S3}, yet different.

\begin{proposition}\label{prop_frace2conv2}
	Assume $\cS$ is finite and $\kappa_\xi>0$.
	\begin{enumerate}
		\item The exponential integral $\frace^{(2)}(t)$ converges $\PP_\pi$-a.s. to a finite random variable as $t\to \infty$ if and only if
		\begin{equation}\label{eq-condfrace2finiteas}
		\int_{(1,\infty)} \frac{\log q}{\bar{A}_\xi(\log q)} \PP_j\Big(\sup_{0< t\leq \tau_1(j)} e^{-\xi_{t-}} | \Delta(Y^{b,\eta}_t+ Y^{(2)}_t) |\in dq\Big)<\infty \quad \text{for all} \quad j\in \cS.
		\end{equation}
		\item The exponential integral $\frace^{(2)}(t)$ converges in $\PP_j$-probability to some random variable $\frace^{(2)}_\infty$ as $t\to\infty$,
		 if and only if \begin{equation} \label{eq-condfrace2finiteweak}
		\int_{(1,\infty)} \frac{\log q}{\bar{A}_\xi(\log q)} \PP_j\Big(\Big|\int_{(0,\tau_1(j)]} e^{-\xi_{t-}} d(Y^{b,\eta}_t+ Y^{(2)}_t) \Big| \in dq\Big)<\infty. 	\end{equation} 
		Moreover, in this case convergence in $\PP_j$-probability holds for all $j\in \cS$.
		\item Given
		\begin{equation}\label{eq_degexp2}
		\PP_\pi\left( \frace^{(2)}(t) = c_{J_0}- c_{J_t} e^{-\xi_{t}} \quad \text{for all } t\geq 0\right)<1 
		\end{equation}
		for all sequences $\{c_i,i\in\cS\}$ in $\RR$, the exponential integral $\frace^{(2)}(t)$ converges $\PP_\pi$-a.s. as $t\to \infty$ if and only if it converges in $\PP_j$-probability for some/all $j\in\cS$.
	\end{enumerate}
\end{proposition}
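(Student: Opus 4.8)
The proof follows the pattern of Proposition~\ref{prop_frace2conv}, the single new ingredient being a substitute for the normalising denominator that survives the case $\kappa_\xi=\infty$. First I would rewrite $\frace^{(2)}(t)$ as the Markov modulated perpetuity~\eqref{eq_proofsperp}, driven by $(A_n,B_n)=\bigl(e^{-(\xi_{\tilde T_n-}-\xi_{\tilde T_{n-1}-})},\,\Delta(Y^{b,\eta}_{\tilde T_n}+Y^{(2)}_{\tilde T_n})\bigr)_{n\in\NN}$ and modulated by the retarded jump chain $\tilde J$ with return times satisfying $\tilde T_{\tilde\tau_n(j)}=\tau_n(j)$, exactly as in the proof of Proposition~\ref{prop_frace2conv}. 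Because $\cS$ is finite and $\kappa_\xi>0$ is well-defined (possibly $+\infty$), a truncation argument together with \eqref{eq-MAP07} gives $\xi_t\to\infty$ $\PP_\pi$-a.s., so that $\prod_{k=1}^{\tilde\tau_n(j)}A_k=e^{-\xi_{\tau_n(j)}}\to0$ $\PP_{\tilde\pi}$-a.s., and the non-degeneracy \eqref{eq_AB2} holds verbatim. Parts~(i) and~(ii) then follow from \cite[Thm.~3.1]{alsmeyerbuckmann1} and \cite[Thm.~3.4]{alsmeyerbuckmann1}, once the denominator $\int_{(0,\log q)}\PP_j(\xi_{\tau_1(j)}>x)\,dx=\EE_j[\xi_{\tau_1(j)}^+\wedge\log q]$ occurring in \eqref{eq_AB3} and \eqref{eq_AB7} is shown to be comparable, up to multiplicative constants, with $\bar A_\xi(\log q)$; the identifications $W_j=\sup_{0<t\le\tau_1(j)}e^{-\xi_{t-}}|\Delta(Y^{b,\eta}_t+Y^{(2)}_t)|$ and $|Z_{\tilde\tau_1(j)}|=\bigl|\int_{(0,\tau_1(j)]}e^{-\xi_{t-}}\,d(Y^{b,\eta}_t+Y^{(2)}_t)\bigr|$ are as in Proposition~\ref{prop_frace2conv} and turn the Alsmeyer--Buckmann integrals into \eqref{eq-condfrace2finiteas} and \eqref{eq-condfrace2finiteweak}.

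The heart of the matter --- and the step I expect to be the main obstacle --- is therefore the two-sided bound
\begin{equation*}
c_-\,\bar A_\xi(x)\ \le\ \EE_j\bigl[\xi_{\tau_1(j)}^+\wedge x\bigr]\ \le\ c_+\,\bar A_\xi(x)\qquad\text{for all }j\in\cS,\ x\ge x_0,
\end{equation*}
with $0<c_-\le c_+<\infty$; uniformity in $j$ is free since $\cS$ is finite, and $\bar A_\xi(x)>0$ for large $x$ because $\kappa_\xi>0$. Given this bound one may replace $\EE_j[\xi_{\tau_1(j)}^+\wedge\log q]$ by $\bar A_\xi(\log q)$ inside the integrals without affecting their finiteness, which completes (i) and (ii). The upper bound is essentially routine: using subadditivity of $y\mapsto y\wedge x$ on $[0,\infty)$, a decomposition of a $j$-cycle into the sojourns in the individual states and the modulation jumps, and Wald's identity (the sojourn length $\tau_1(j)$ and the number of jumps in a $j$-cycle having finite exponential moments), $\EE_j[\xi_{\tau_1(j)}^+\wedge x]$ is dominated by a constant multiple of the corresponding truncated positive contributions, which is comparable with $\bar A_\xi(x)$. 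The lower bound is the delicate one, because of possible cancellation between up- and down-movements within a cycle: here I would isolate the contribution of the large positive increments --- the jumps of size $>1$ of each $\xi^{(j)}$ and the positive modulation jumps --- and show, along the lines of the Erickson-type estimates already exploited through \cite[Lemma~8.1]{alsmeyerbuckmann2} in Section~\ref{s42}, that a fixed positive fraction of their truncated mass survives.

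For part~(iii) I would mirror the Remark following Theorem~\ref{thmnessandsuff}. Suppose $\frace^{(2)}(t)$ converges in $\PP_j$-probability for some, hence by~(ii) for all, $j\in\cS$, and that \eqref{eq_degexp2} holds. Conflating the excursions of $J$ off state $j$ into single jumps, as in the proof of Proposition~\ref{prop-weak}, turns $\frace^{(2)}$ into the exponential integral $\hat{\frace}^{(2),j}$ of a bivariate \emph{Lévy} process whose drift component $\hat\xi^j$ satisfies $\hat\xi^j_t\to\infty$ $\PP_j$-a.s. (by Lemma~\ref{lem-help2}, since $\kappa_\xi>0$); the convergence in probability passes to this conflated integral, and \eqref{eq_degexp2} rules out the degenerate form \eqref{eq_EM3}. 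By \cite[Thm.~2]{ericksonmaller05} an exponential functional of a Lévy process drifting to $+\infty$ that converges in probability already converges almost surely, so $\hat{\frace}^{(2),j}$ converges $\PP_j$-a.s.; carrying this out for every $j\in\cS$ and invoking finiteness of $\cS$ (so that $|\frace^{(2)}(t)-\frace^{(2)}_\infty|\le\max_{j\in\cS}|\hat{\frace}^{(2),j}(t)-\frace^{(2)}_\infty|\to0$ $\PP_\pi$-a.s.) yields $\PP_\pi$-a.s. convergence of $\frace^{(2)}(t)$. The reverse implication is immediate.
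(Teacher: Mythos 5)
Your architecture for parts (i) and (ii) coincides with the paper's: rewrite $\frace^{(2)}$ as the Markov modulated perpetuity \eqref{eq_proofsperp}, invoke \cite[Thms.~3.1 and 3.4]{alsmeyerbuckmann1} with the identifications of $W_j$ and $Z_{\tilde\tau_1(j)}$, and reduce everything to showing that the Alsmeyer--Buckmann denominator $\EE_j[\xi_{\tau_1(j)}^+\wedge\log q]$ is comparable to $\bar A_\xi(\log q)$. But that comparison is exactly the step you leave open: you name the two-sided bound as the ``main obstacle'' and only describe how one \emph{would} prove it, and your sketch of the lower bound (``isolate the large positive increments and show a fixed fraction of their truncated mass survives'') does not engage the actual difficulty, namely that $\xi_{\tau_1(j)}$ aggregates a random number of increments over a whole return cycle, within which large positive jumps can be cancelled before the cycle closes. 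The paper closes this in two concrete moves. First, $\EE_j[\xi_{\tau_1(j)}^+\wedge\log q]\asymp\EE_\pi[\xi_{\breve T_1}^+\wedge\log q]$ by the cycle-to-single-increment comparison \cite[Lemma~8.16]{alsmeyerbuckmann2}, where $\breve T_1$ is the first jump time of the enlarged chain $(Y^{b,\xi}_t+J_t)_{t\geq 0}$. Second, the L\'evy--It\^o decomposition of the single increment $\xi_{\breve T_1}$ shows that the drift and the compensated small-jump martingale contribute only boundedly (finiteness of $\cS$ enters through $\sup_{j}|\gamma_{\xi^{(j)}}|<\infty$), so that $\EE_\pi[\xi_{\breve T_1}^+\wedge\log q]\asymp\EE_\pi[(Y^{b,\xi}_{\breve T_1}+X^{(2)}_{\breve T_1})^+\wedge\log q]$, which is computed explicitly and is $\asymp\bar A_\xi(\log q)$. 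Without a substitute for \cite[Lemma~8.16]{alsmeyerbuckmann2} your plan does not deliver the lower bound; this is a genuine gap, not a routine verification. (The paper also treats $\kappa_\xi<\infty$ separately, where both quantities converge to finite positive limits and the denominator can simply be ignored; your unified bound would subsume this, but again only once proved.)

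For part (iii) you take a route different from the paper's: you conflate the excursions off $j$ into a L\'evy exponential integral with $\hat\xi^j_t\to\infty$ and invoke the Erickson--Maller dichotomy, then pass back using finiteness of $\cS$; the paper instead cites \cite[Rem.~3.8]{alsmeyerbuckmann1} together with the degeneracy analysis of Section~\ref{s41}. Your argument is in the spirit of the Remark following Theorem~\ref{thmnessandsuff} and is acceptable, but you should justify the transfer of convergence in probability at deterministic times to the conflated integral evaluated along the random time change; the cleaner route is to note that (ii) already yields \eqref{eq-condfrace2finiteweak}, which translates directly into the Erickson--Maller integral condition for the conflated L\'evy pair.
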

\begin{proof}
	We use the same notation as in the proof of Proposition \ref{prop_frace2conv} and follow its lines up to proving that \eqref{eq_proofs3} is equivalent to \eqref{eq-condfrace2finiteas}.\\
	Note that  $W_i=\sup_{0< t\leq \tau_1(i)} e^{-\xi_{t-}} | \Delta(Y^{b,\eta}_t+ Y^{(2)}_t) |$ as shown in the proof of Proposition \ref{prop_frace2conv} and thus the two expressions only differ in the appearing denominator.\\
	If $\kappa_\xi<\infty$, then as shown in the proof of Proposition \ref{prop_frace2conv} the denominator appearing in  \eqref{eq_proofs3} can be ignored. The same holds true under this assumption for the denominator in \eqref{eq-condfrace2finiteas} as
	\begin{align*}
	\bar{A}_\xi(\log q)
	&\underset{q\to\infty} \nearrow  \sum_{j\in\cS} \pi_j  \left(\gamma_{\xi^{(j)}} + \bar{\nu}_{\xi^{(j)}}^+(1) + \int_1^{\infty} \bar{\nu}_{\xi^{(j)}}^+(y) dy + \sum_{\substack{i\in\cS \\i\neq j}} q_{i,j} \int_{\RR_+} y F_\xi^{(i,j)}(dy)   \right) \\
	&= \kappa_\xi - \underbrace{\sum_{j\in\cS} \pi_j  \int_{(-\infty,-1]} x \nu_{\xi^{(j)}} (dx)}_{\in(-\infty,0]} \in (0,\infty).
	\end{align*}
	Thus assume $\kappa_\xi=\infty$ such that $\xi_t$ tends to $\infty$. 
Let $\{\breve{T}_n, n\in\NN_0\}$ be the jump times of $(Y^{b,\xi}_t+J_t)_{t\geq 0}$ with  $\breve{T}_0:=0$ such that  $\{T_n,n\in\NN_0\} \subseteq \{\breve{T}_n, n\in\NN_0\}$ and $\{\breve{T}_n, n\in\NN_0\}$ contains all jump times of $(Y^{b,\xi}_t+X^{(2)}_t)_{t\geq 0}$. Repeating the computation and argumentation leading to \eqref{eq_proofsperp} we note that this generates another retarded discrete time version $\breve{J}$ of $J$ which is w.l.o.g. aperiodic and ergodic with stationary law $\breve{\pi}$ equivalent to $\pi$ and such that its return times $\breve{\tau}_n(j)$ satisfy $\breve{T}_{\breve{\tau}_n(j)}=\tau_n(j)$. \\
	Then 
	\begin{align*}
	\int_{(0,\log q)} \PP_j(\xi_{\tau_1(j)}>x)dx &= \EE_j \left[\xi_{\tau_1(j)}^+ \wedge \log q \right]
	\asymp \EE_\pi[\xi_{\breve{T}_1}^+ \wedge \log q] \quad \text{for }q\to \infty,
	\end{align*}
	by \cite[Lemma 8.16]{alsmeyerbuckmann2}, where we use the notation of $f(x)\asymp g(x)$ whenever $\liminf_{x\to\infty} \frac{f(x)}{g(x)}>0$ and $\limsup_{x\to\infty} \frac{f(x)}{g(x)}<\infty$.
	Further
	\begin{align*}
	\EE_\pi[\xi_{\breve{T}_1}^+ \wedge \log q] 
	&= \int_{(0,\log q)} \PP_\pi(\xi_{\breve{T}_1}>x) dx\\
	&=  \int_{(0,\log q)} \PP_\pi\left(\gamma^\xi_{\breve{T}_1} + W^\xi_{\breve{T}_1} + Y^{b,\xi}_{\breve{T}_1} + Y^{s,\xi}_{\breve{T}_1}  +  X_{\breve{T}_1}^{(2)} >x\right) dx,
	\end{align*}
	where  $(W^\xi_t +  Y^{s,\xi}_t)_{t\geq 0}$ is a martingale and $\sup_{j\in\cS} |\gamma_{\xi^{(j)}}|<\infty$ for finite $\cS$. Thus 
	$$	\int_{(0,\log q)} \PP_\pi\left(\gamma^\xi_{\breve{T}_1} + W^\xi_{\breve{T}_1} + Y^{b,\xi}_{\breve{T}_1} + Y^{s,\xi}_{\breve{T}_1}  +  X_{\breve{T}_1}^{(2)} >x \right)  dx \asymp 	\int_{(0,\log q)} \PP_\pi\left( Y^{b,\xi}_{\breve{T}_1}  +  X_{\breve{T}_1}^{(2)} > x\right) dx \quad \text{as }q\to \infty,$$
	and hence 
	\begin{align*}
	\int_{(0,\log q)} \PP_j(\xi_{\tau_1(j)}>x)dx 
	&\asymp \int_{(0,\log q)} \PP_\pi\left( Y^{b,\xi}_{\breve{T}_1}  +  X_{\breve{T}_1}^{(2)} > x\right) dx\\
	&= \EE_\pi[(Y^{b,\xi}_{\breve{T}_1}  +  X_{\breve{T}_1}^{(2)})^+\wedge \log q]\\
	&= \sum_{j\in\cS} \pi_j \left(\int_{(1,\log q)} \bar{\nu}^+_\xi(y) dy + \sum_{\substack{i\in\cS \\i\neq j}} q_{i,j} \int_{(0,\log q)} y F_\xi^{(i,j)}(dy) \right) \\
	&\asymp \bar{A}_\xi(\log q),	
	\end{align*}
	which implies equivalence of \eqref{eq_proofs3} and \eqref{eq-condfrace2finiteas}. \\
	Again, the proof for convergence in $\PP_j$-probability can be carried out analogously. That this implies convergence in $\PP_j$-probability for all $j$ follows as in Proposition \ref{prop-weak}.\\
	Finally (iii) follows from \cite[Rem. 3.8]{alsmeyerbuckmann1} and applying the results from Section \ref{s41} on $\frace^{(2)}$.
\end{proof}

\begin{remark}
	Note that \eqref{eq_degexp2} excludes degeneracy of $\frace^{(2)}$, but this does not necessarily imply non-degeneracy of $\frace_{(\xi,\eta)}$ or vice versa. This would only be the case if one assumes additionally that $\frace^{(1)}$ is degenerate, i.e. if there exists a sequence $\{\tilde{c}_j, j\in\cS\}$ such that
	\begin{equation}\label{eq_degexp1}
	\frace^{(1)}=\tilde{c}_{J_0}- \tilde{c}_{J_t} e^{-\xi_t} \quad \PP_\pi\text{-a.s. for all }t\geq 0.
	\end{equation}
	Indeed, given \eqref{eq_degexp1}, \eqref{eq_deg00} is equivalent to the existence of a (unique) sequence $\{\check{c}_j,j\in\cS\}$ such that 
	$$\frace^{(2)}=\check{c}_{J_0}- \check{c}_{J_t} e^{-\xi_t} \quad \PP_\pi\text{-a.s. for all }t\geq 0,$$
	can be seen by direct computations.
\end{remark}

The following corollary exemplarily summarizes results of Propositions \ref{prop_frace1strongconv} and  \ref{prop_frace2conv}. Similar statements for other scenarios can easily be formulated using the above statements. 

\begin{corollary} Assume $0<\kappa_\xi<\infty$ and \eqref{eq_proofs0} as well as \eqref{eq_proofs4} hold. Then $\frace_{(\xi,\eta)}(t)$ converges $\PP_\pi$-a.s. to a finite random variable as $t\to\infty.$	
\end{corollary}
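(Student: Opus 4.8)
The plan is to read the corollary off the decomposition \eqref{eq_fracesplit} together with Propositions \ref{prop_frace1strongconv} and \ref{prop_frace2conv}. Recall that \eqref{eq_fracesplit} writes
$\frace_{(\xi,\eta)}(t) = \frace^{(1)}(t) + \frace^{(2)}(t)$ with $\frace^{(1)}(t) = \int_{(0,t]} e^{-\xi_{s-}}\,d(\gamma^\eta_s + W^\eta_s + Y^{s,\eta}_s)$ and $\frace^{(2)}(t) = \int_{(0,t]} e^{-\xi_{s-}}\,d(Y^{b,\eta}_s + Y^{(2)}_s)$. It therefore suffices to show that each of the two summands converges $\PP_\pi$-a.s.\ to a finite limit and then to add these limits, since an a.s.\ convergent sum of two a.s.\ convergent sequences is again a.s.\ convergent.

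First I would dispatch $\frace^{(1)}$. The standing hypotheses $0<\kappa_\xi<\infty$ and \eqref{eq_proofs0} are exactly those of Proposition \ref{prop_frace1strongconv} (the latter being $\sup_{j\in\cS}(|\gamma_{\eta^{(j)}}| + \sigma^2_{\eta^{(j)}} + \int_{(0,1)} x^2\nu_{\eta^{(j)}}(dx))<\infty$), so that proposition immediately gives a finite random variable $\frace^{(1)}_\infty$ with $\frace^{(1)}(t)\to\frace^{(1)}_\infty$ $\PP_\pi$-a.s.\ as $t\to\infty$. Next I would dispatch $\frace^{(2)}$: again $0<\kappa_\xi<\infty$ is in force, and \eqref{eq_proofs4} is precisely the necessary and sufficient condition in Proposition \ref{prop_frace2conv}(i) for $\PP_\pi$-a.s.\ convergence of $\frace^{(2)}(t)$ to a finite limit $\frace^{(2)}_\infty$. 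Combining the two, $\frace_{(\xi,\eta)}(t)=\frace^{(1)}(t)+\frace^{(2)}(t)\to\frace^{(1)}_\infty+\frace^{(2)}_\infty$ $\PP_\pi$-a.s., and the limit is finite, which is the assertion.

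There is no genuine obstacle in this corollary; all of the analytic content has already been absorbed into Propositions \ref{prop_frace1strongconv} and \ref{prop_frace2conv}, and the only point to check is that the corollary's hypotheses coincide verbatim with the hypotheses of those two propositions, which they do. The same bookkeeping yields companion statements — e.g.\ pairing Proposition \ref{prop_frace1strongconv} with Proposition \ref{prop_frace2conv}(ii) gives convergence in $\PP_j$-probability under \eqref{eq_proofs0} and \eqref{eq_proofs5}, and for finite $\cS$ one may instead invoke Proposition \ref{prop_frace2conv2} — which is why the corollary is phrased as one representative instance.
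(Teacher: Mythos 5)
Your proposal is correct and coincides with the paper's intended argument: the corollary is stated precisely as the conjunction of Proposition \ref{prop_frace1strongconv} (handling $\frace^{(1)}$ under $0<\kappa_\xi<\infty$ and \eqref{eq_proofs0}) and Proposition \ref{prop_frace2conv}(i) (handling $\frace^{(2)}$ under $0<\kappa_\xi<\infty$ and \eqref{eq_proofs4}), combined through the decomposition \eqref{eq_fracesplit}. Nothing further is needed.
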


\section*{Acknowledgements}

We would like to thank two anonymous referees for their comments on an earlier version of this paper which helped us to improve it. 
	
  \bibliography{literatureMAPfunc}

\begin{thebibliography}{10}

\bibitem{alili}
L.~Alili and D.~Woodford.
\newblock On the finiteness and tails of perpetuities under a {L}amperti-{K}iu
  {MAP}.
\newblock 2019.
\newblock Preprint. Available on arXiv:1811.10286v2.

\bibitem{alsmeyerbuckmann1}
G.~Alsmeyer and F.~Buckmann.
\newblock Stability of perpetuities in {M}arkovian environment.
\newblock {\em J. Difference Equ. Appl.}, 23:699--740, 2017.

\bibitem{alsmeyerbuckmann2}
G.~Alsmeyer and F.~Buckmann.
\newblock Fluctuation theory for {M}arkov random walks.
\newblock {\em J. Theoret. Probab.}, 31:2266--2342, 2019.

\bibitem{asmussen}
S.~Asmussen.
\newblock {\em Applied Probability and Queues}.
\newblock Springer, 2nd edition, 2003.

\bibitem{asmussenkella}
S.~Asmussen and O.~Kella.
\newblock A multidimensional martingale for {M}arkov additive processes and its
  applications.
\newblock {\em Adv. Appl. Probab.}, 32:376--393, 2000.

\bibitem{behmeALEA}
A.~Behme.
\newblock Exponential functionals of {L}\'evy processes with jumps.
\newblock {\em ALEA}, 12:375--397, 2015.

\bibitem{behmelindnerJOTP}
A.~Behme and A.~Lindner.
\newblock On exponential functionals of {L}\'evy processes.
\newblock {\em J. Theor. Probab.}, 28:681--720, 2015.

\bibitem{behmelindnermaller}
A.~Behme, A.~Lindner, and R.~Maller.
\newblock Stationary solutions of the stochastic differential equation
  $d{V}_t={V}_{t-}d{U}_t+d{L}_t$ with {L}\'evy noise.
\newblock {\em Stoch. Proc. Appl.}, 121:91--108, 2011.

\bibitem{bertoinyor}
J.~Bertoin and M.~Yor.
\newblock Exponential functionals of {L}\'evy processes.
\newblock {\em Probab. Surveys}, 2:191--212, 2005.

\bibitem{cinlarMAP1}
E.~\c{C}inlar.
\newblock {M}arkov additive processes {I}.
\newblock {\em Z. Wahrscheinlichkeitstheorie und Verw. Gebiete}, 24:85--93,
  1972.

\bibitem{cinlarMAP2}
E.~\c{C}inlar.
\newblock {M}arkov additive processes {II}.
\newblock {\em Z. Wahrscheinlichkeitstheorie und Verw. Gebiete}, 24:95--121,
  1972.

\bibitem{chaumont}
L.~Chaumont, H.~Panti, and V.~Rivero.
\newblock The {L}amperti representation of real-valued self-similar {M}arkov
  processes.
\newblock {\em Bernoulli}, 19:2494--2523, 2013.

\bibitem{kyprianoudereich}
S.~Dereich, L.~D\"oring, and A.~E. Kyprianou.
\newblock Real self-similar processes started from the origin.
\newblock {\em Ann. Probab.}, 45:1952--2003, 2017.

\bibitem{ericksonmaller05b}
K.~B. Erickson and R.~A. Maller.
\newblock Drift to infinity and the strong law for subordinated random walks
  and {L}\'evy processes.
\newblock {\em J. Theoret. Probab.}, 18:359--375, 2005.

\bibitem{ericksonmaller05}
K.~B. Erickson and R.~A. Maller.
\newblock Generalised {O}rnstein-{U}hlenbeck processes and the convergence of
  {L}\'evy integrals.
\newblock In M.~Emery, M.~Ledoux, and M.~Yor, editors, {\em S\'eminaire de
  Probabilit\'es XXXVIII, Lecture Notes in Mathematics}, volume 1857, pages
  70--94. Springer, Berlin, 2005.

\bibitem{goldiemaller00}
C.~M. Goldie and R.~Maller.
\newblock Stability of perpetuities.
\newblock {\em Ann. Probab.}, 28:1195--1218, 2000.

\bibitem{grigelionis}
B.~Grigelionis.
\newblock Additive {M}arkov processes.
\newblock {\em Lith. Math. J.}, 18:340--342, 1978.

\bibitem{hamilton}
J.~D. Hamilton.
\newblock A new approach to the economic analysis of nonstationary time series
  and the business cycle.
\newblock {\em Econometrika}, 57:357--384, 1989.

\bibitem{kellarama}
O.~Kella and S.~Ramasubramanian.
\newblock Asymptotic irrelevance of initial conditions for skorohod reflection
  mapping on the nonnegative orthant.
\newblock {\em Mathematics of Operations Research}, 37:301--312, 2012.

\bibitem{Kesten}
H.~Kesten.
\newblock The limit points of a normalized random walk.
\newblock {\em Ann. Math. Statist.}, 41:1173--1205, 1970.

\bibitem{KLM:2004}
C.~Kl{\"u}ppelberg, A.~Lindner, and R.~Maller.
\newblock A continuous-time {GARCH} process driven by a {L}\'evy process:
  stationarity and second-order behaviour.
\newblock {\em J. Appl. Probab.}, 41:601--622, 2004.

\bibitem{klusikpalm}
P.~Klusik and Z.~Palmowski.
\newblock A note on {W}iener-{H}opf factorization for {M}arkov additive
  processes.
\newblock {\em J. Theoret. Probab.}, 27.1:202--219, 2014.

\bibitem{kuznetsovetal}
A.~Kuznetsov, J.~C. Pardo, and M.~Savov.
\newblock Distributional properties of exponential functionals of {L}\'evy
  processes.
\newblock {\em Electron. J. Probab.}, 17:1--35, 2012.

\bibitem{kyprianousatit}
A.E. Kyprianou, V.~Rivero, and W.~Satitkanitkul.
\newblock Conditioned real self-similar {M}arkov processes.
\newblock {\em Stoch. Proc. Appl.}, 129:954--977, 2019.

\bibitem{lindnermaller05}
A.~Lindner and R.~Maller.
\newblock L\'evy integrals and the stationarity of generalised
  {O}rnstein-{U}hlenbeck processes.
\newblock {\em Stoch. Proc. Appl.}, 115:1701--1722, 2005.

\bibitem{PardoPatieSavov}
J.~C. Pardo, P.~Patie, and M.~Savov.
\newblock A {W}iener-{H}opf type factorization of the exponential functional of
  {L}\'evy processes.
\newblock {\em J. London Math. Soc.}, 86:930--956, 2012.

\bibitem{PardoRiveroSchaik}
J.~C. Pardo, V.~Rivero, and K.~van Schaik.
\newblock On the density of exponential functionals of {L}\'evy processes.
\newblock {\em Bernoulli}, 19:1938--1964, 2013.

\bibitem{paulsen93}
J.~Paulsen.
\newblock Risk theory in a stochastic economic environment.
\newblock {\em Stoch. Proc. Appl.}, 46:327--361, 1993.

\bibitem{protter}
P.~E. Protter.
\newblock {\em Stochastic Integration and Differential Equations.}
\newblock Springer, Berlin, 2nd edition, 2004.

\bibitem{salminen1}
P.~Salminen and L.~Vostrikova.
\newblock On exponential functionals of processes with independent increments.
\newblock {\em Theory Probab. Appl.}, 63(2):267--291, 2018.

\bibitem{salminen2}
P.~Salminen and L.~Vostrikova.
\newblock On moments of exponential functionals of additive processes.
\newblock 2018.
\newblock Preprint. Available on arXiv:1803.04859.

\bibitem{sato2nd}
K.~Sato.
\newblock {\em L\'evy {P}rocesses and {I}nfinitely {D}ivisible
  {D}istributions}.
\newblock Cambridge University Press, Cambridge, 2nd edition, 2013.

\bibitem{stephenson}
R.~Stephenson.
\newblock On the exponential functional of {M}arkov additive processes, and
  applications to multi-type self-similar fragmentation processes and trees.
\newblock {\em ALEA, Lat. Am. J. Probab. Math. Stat.}, 15:1257--1292, 2018.

\end{thebibliography}
	  
 \end{document}